\title[Quasi-modular spaces]{Quasi-modular spaces with applications to quasi-normed Calder\'{o}n--Lozanovski\u{\i}{} spaces}
\author[Pawe{\l} Foralewski]{Pawe{\l} Foralewski}
\address{Faculty of Mathematics and Computer Science, Adam Mickiewicz University, Pozna\'{n},
Uniwersytetu Pozna\'{n}skiego 4, 61-614 Pozna\'{n}, Poland}
\email{katon@amu.edu.pl}
\author[Henryk Hudzik]{Henryk Hudzik$^1$}\thanks{$^1$ This work was initiated and partially discused with Professor Henryk Hudzik who passed away on March 2, 2019. This is our tribute to our dear friend and master.}
\address{Faculty of Mathematics and Computer Science, Adam Mickiewicz University, Pozna\'{n},
Uniwersytetu Pozna\'{n}skiego 4, 61-614 Pozna\'{n}, Poland}
\author[Pawe{\l} Kolwicz]{Pawe{\l} Kolwicz$^2$}\thanks{$^2$ Corresponding author}
\address{Poznan University of Technology, Institute of Mathematics,  
Piotrowo 3A, 60-965 Pozna\'{n}, Poland}
\email{pawel.kolwicz@put.poznan.pl}
\newtheorem{theorem}{\indent Theorem}[section]
\newtheorem{definition}[theorem]{\indent Definition}
\newtheorem{lemma}[theorem]{\indent Lemma}
\newtheorem{remark}[theorem]{\indent Remark}
\newtheorem{example}[theorem]{\indent Example}
\newtheorem{corollary}[theorem]{\indent Corollary}
\numberwithin{equation}{section}
\numberwithin{equation}{section}
\DeclareMathOperator{\supp}{supp}
\subjclass[2010]{46E30, 46A80, 46B20, 46A16}
\keywords{quasi-modular, quasi-normed ideal space, quasi-normed Calder\'{o}n--Lozanovski\u{\i}{} space, order isomorphic and order linear isometric copies of $l^{\infty}$}
\begin{document}

\begin{abstract}
In this paper we introduce the notion of a quasi-modular and we prove that the respective Minkowski functional of the unit quasi-modular ball becomes a quasi-norm. In this way, we refer to and complete the well-known theory related to the notions of a modular and a convex modular that lead to the $F$-norm and to the norm, respectively. We use the obtained results to consider basic properties of quasi-normed Calderón--Lozanovski\u{\i}{} spaces $E_{\varphi }$, where the lower Matuszewska-Orlicz index $\alpha _{\varphi }$ plays the key role. We also give a number of theorems concerning different copies of $l^{\infty }$ in the spaces $E_{\varphi }$ in the natural language of suitable properties of the space $E$ and the function $\varphi$. Our studies are conducted in a full possible generality.
\end{abstract}

\maketitle

\section{Introduction}

The geometry of normed spaces have been intensively developed in the last decades and has a lot of important applications. Moreover, a particular attention has been paid to the study of quasi-normed or $F$-normed spaces, which are important generalizations of normed spaces (see \cites{bhs, chkk, k1, k, KPR84, kmp, kz, Kol2018-Posit, lee, ma04}). Recall also that the so-called $\Delta $-norm is a generalization of both an $F$-norm and a quasi-norm (see \cite{KPR84}). From a different point of view, the theory of modular spaces was and is also being developed (see \cites{kko, ko, mu}).

We will try to link these threads. Namely, we will introduce a new (as far as we know) notion of the functional $\rho $ which we will call a quasi-modular. This is an essential generalization of the concept of convex semi-modular. Furthermore, this quasi-modular $\rho$ is defined in such a way that the respective Minkowski functional of the unit quasi-modular ball gives a quasi-norm. 

Then we consider a special case of quasi-modular spaces, that is, the quasi-normed Calderón--Lozanovski\u{\i}{} spaces $E_{\varphi}$ generated by a quasi-modular $\rho_{\varphi}^{E}$. These spaces have been studied in \cite{kmp} and, in particular case for $E=L^{1}$, in \cite{kz}. Note that, in both these papers, the authors considered only strictly increasing, non-convex Orlicz functions and we admit non-convex, non-decreasing and degenerated Orlicz functions, which gives the full generality of studies. Note also that the topics discussed in \cite{kmp} (and almost all in \cite{kz}) and in our paper are completely different.

First, we will check what are the relationships between a positive Matuszewska-Orlicz index $\alpha_{\varphi}^{E}$ and the fact that $\rho_{\varphi}^{E}$ ($\left\Vert\cdot\right\Vert_{\varphi}$) is a quasi-modular (a quasi-norm), respectively. Then we characterize the spaces $E_{\varphi}$ having order isomorphic copy of $l_{\infty}$ and order linearly isometric copy of $l_{\infty }$. Denote by $P$ a property of having one of such a copy. We will show how the suitable conditions of the Orlicz function $\varphi$ or the properties $P$ of the quasi-Banach ideal space $E$ affect the corresponding property $P$ of the space $E_{\varphi}$. Moreover, we consider also the reverse implication. We will focus on those elements of this puzzle that distinguish the considered quasi-normed case from the well known normed case (which has been intensively and widely studied - see for example \cites{hkm,fh,ma}). We will see that quasi-normed case forces a lot of new techniques and methods in the comparison to the normed case. In particular the conditions $\Delta_{\varepsilon }$ and $\Delta_{2-str}$ of the non-convex Orlicz function $\varphi$ become crucial. Finally, it is worth to mention that we consider both the function and the sequence case which is rare in similar studies of Calderón--Lozanovski\u{\i}{} spaces $E_{\varphi}$ so far, because these cases require different techniques. Of course, considering these two cases in one paper will allow the reader to see the essential differences easily.

\section{Preliminaries}

\begin{definition}
\label{quasi-norm} Given a real vector space $X$ the functional $x\mapsto\Vert x\Vert$ is called a \textit{quasi-norm} if the following three
conditions are satisfied:
\begin{itemize}
\item [$(i)$] $\Vert x\Vert =0$ if and only if $x=0$.
\item [$(ii)$] $\Vert ax\Vert =|a|\Vert x\Vert$ for any $x\in X$ and $a\in\mathbb R$.
\item [$(iii)$] there exists $C=C_{X}\geq1$ such that $\Vert x+y\Vert\leq C(\Vert x\Vert +\Vert y\Vert )$ for all $x,y\in X$.
\end{itemize}
\end{definition}

For $0<p\leq1$, the functional $x\mapsto\Vert x\Vert_{1}$ is called a {\em p-norm} if it satisfies the first two conditions of the quasi-norm and the condition $\Vert x+y\Vert_{1}^{p}\leq \Vert x\Vert_{1}^{p}+\Vert y\Vert_{1}^{p}$ for any $x,y\in X$. Clearly, each p-norm is a quasi-norm. By the \textit{Aoki--Rolewicz theorem} (cf.\ \cite{KPR84}*{Theorem 1.3 on page 7}, \cite{ma04}*{page 86}), given a quasi-norm $\Vert\cdot\Vert$, if $0<p\leq 1$ is such that $C=2^{1/p-1}$, then there exists a $p$-norm $\Vert\cdot\Vert _{1}$ which is equivalent to $\Vert\cdot\Vert$, that is 
\begin{equation}
\Vert x\Vert _{1}\leq \Vert x\Vert \leq 2C\Vert x\Vert _{1}
\label{p-norma}
\end{equation}
for all $x\in X$. The quasi-norm $\Vert \cdot\Vert $ induces a metric topology on $X$: in fact a metric can be defined by $d(x,y)=\Vert x-y\Vert_{1}^{p}$. We say that $X=(X,\Vert\cdot\Vert )$ is a \textit{quasi-Banach space} if it is complete for this metric. Let us note that a lot of important informations and results on quasi-Banach spaces can be found in \cite{k}, see also \cite{KPR84}.

As usual $S(X)$ (resp. $B(X)$) stands for the unit sphere (resp. the closed unit ball) of a real quasi-Banach space $(X,\Vert\cdot\Vert_{X})$.

Recall that a quasi-Banach lattice $E$ is called {\it order continuous} ($E\in({\rm OC})$) if for each sequence $x_{n}\downarrow0$, that is $x_{n}\geq x_{n+1}$ and $\inf_{n}x_{n}=0$, we have $\Vert x_{n}\Vert _{E}\rightarrow 0$ (see \cite{ka},\cite{ltf},\cite{Wnuk}). Moreover, $E\in\left(\rm{OC}\right) $ if and only if for every element $x\in E$ and each sequence $\left( x_{n}\right) $ in $E$ satisfying conditions
$\inf \left\{ x_{n},x_{m}\right\} =0$ for $n\neq m$ and $0\leq x_{n}\leq \left\vert x\right\vert $ we have $\left\Vert x_{n}\right\Vert_{E}\rightarrow 0$ (see Theorem 2.1 in \cite{Kol2018-Posit}).

\section{Quasi-modular spaces}

In this section we introduce the notions of quasi-modular and quasi-modular space.

\begin{definition}\label{quasi-mod} Let $X$ be a real linear space. We say that a function $\rho:X\rightarrow \lbrack 0,\infty]$ is a quasi-modular whenever for all $x,y\in X$ the following conditions are satisfied:
\begin{itemize}
\item [$(i)$] $\rho \left( 0\right) =0$ and the condition $\rho\left(\lambda x\right)\leq1$ for all $\lambda >0$ implies
that $x=0$.
\item [$(ii)$] $\rho \left( -x\right) =\rho \left( x\right)$.
\item[$(iii)$] $\rho(\lambda x)$ is non-decreasing function of $\lambda$, where $\lambda\geq0$.
\item [$(iv)$] There is $M\geq1$ such that 
$$\rho\left(\alpha x+\beta y\right)\leq M\left[\rho\left(x\right)+\rho\left(y\right)\right]$$
provided $\alpha,\beta\geq0$ and $\alpha+\beta=1$.
\item[$(v)$] There is a constant $p>0$ such that for all $\varepsilon>0$ and all $A>0$ there exists  $K=K(\varepsilon,A)\geq1$ such that
$$\rho\left(ax\right)\leq Ka^{p}\rho\left(x\right)+\varepsilon$$
for any $0<a\leq1$ whenever $\rho(x)\leq A$.
\end{itemize}
\end{definition}

\begin{remark}\label{rq-m}{\rm $(i)$ The above definition, especially condition $(v)$, has been introduced in such a way in order both to cover the largest possible class of mappings $\rho$ as well as to provide a quasi-norm (see Theorem \ref{t31}). As we will show in Theorem \ref{r42}, condition $(v)$ can be simplified in some particular cases (more precisely, some particular modulars satisfy condition $(v)$ in simpler or stronger form).

$(ii)$ Obviously a convex modular (more precisely a convex semi-modular) defined in \cite{mu} is in particular a quasi-modular. As we will show in Example \ref{e1} $(ii)$ and $(iii)$, the concepts of quasi-modular and modular (more precisely a semi-modular, which induces an F-norm)  defined in \cite{mu} are incomparable.}
\end{remark}

If $\rho$ is a quasi-modular on $X$, then
\[X_{\rho}:=\left\{x\in X\colon \lim_{\lambda\rightarrow0}\rho(\lambda x)=0\right\}\]
is called a quasi-modular space. It is easy to show that $X_{\rho}$ is linear subspace of $X$. We get also the following

\begin{lemma}\label{l3p} For any quasi-modular $\rho$, we have 
\[X_{\rho}=\left\{x\in X\colon \rho(\lambda x)<\infty\text{ for some }\lambda>0\right\}.\]
\end{lemma}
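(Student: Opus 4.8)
The plan is to prove the two inclusions separately. Write $Y:=\{x\in X\colon \rho(\lambda x)<\infty\text{ for some }\lambda>0\}$; we must show $X_\rho=Y$. The inclusion $X_\rho\subseteq Y$ is immediate: if $\lim_{\lambda\to0}\rho(\lambda x)=0$, then in particular $\rho(\lambda x)<\infty$ for all sufficiently small $\lambda>0$, so $x\in Y$.

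For the reverse inclusion $Y\subseteq X_\rho$, fix $x\in Y$ and pick $\lambda_0>0$ with $A:=\rho(\lambda_0 x)<\infty$. I want to show $\rho(\lambda x)\to 0$ as $\lambda\to 0$. By monotonicity (condition $(iii)$) it suffices to let $\lambda\to0$ along $\lambda=a\lambda_0$ with $0<a\le 1$. Apply condition $(v)$ to the vector $\lambda_0 x$: with the given constant $p>0$, for every $\varepsilon>0$ there is $K=K(\varepsilon,A)\ge 1$ such that
\[
\rho(a\lambda_0 x)\le K a^{p}\,\rho(\lambda_0 x)+\varepsilon = K a^{p} A+\varepsilon
\]
for all $0<a\le1$. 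Hence $\limsup_{a\to0^+}\rho(a\lambda_0 x)\le \varepsilon$, and since $\varepsilon>0$ was arbitrary, $\lim_{a\to0^+}\rho(a\lambda_0 x)=0$. Combined with monotonicity in $\lambda$, this gives $\lim_{\lambda\to0^+}\rho(\lambda x)=0$, i.e.\ $x\in X_\rho$.

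The argument is short and the only subtlety — the step I would flag as needing care — is the interplay between condition $(v)$, which is stated only for scaling factors $0<a\le1$ starting from a vector of finite $\rho$-value, and the requirement of a genuine limit as $\lambda\to0$. Monotonicity $(iii)$ bridges this: any $\lambda$ with $0<\lambda<\lambda_0$ can be written $\lambda=a\lambda_0$ with $0<a<1$, so no values of $\rho$ outside the controlled range are needed, and the $\varepsilon$ in $(v)$ can be absorbed precisely because it is allowed to depend on nothing but $\varepsilon$ and the bound $A$. One should also note $x=0$ is trivially in both sets, so the case $A=0$ causes no problem. No appeal to conditions $(i)$, $(ii)$ or $(iv)$ is required for this lemma.
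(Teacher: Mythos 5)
Your proof is correct and follows essentially the same route as the paper: both reduce the lemma to showing that $\rho(\lambda_0 x)<\infty$ forces $\lim_{\lambda\to 0}\rho(\lambda x)=0$, and both obtain this by applying condition $(v)$ to the vector $\lambda_0 x$ with scaling factor $a=\lambda/\lambda_0$, the only cosmetic difference being that you phrase the conclusion via a $\limsup$ while the paper exhibits an explicit $\delta=\lambda_0\bigl(\tfrac{\varepsilon}{2K\rho(\lambda_0 x)}\bigr)^{1/p}$. (Your aside that $A=0$ is covered because ``$x=0$'' is slightly off --- $\rho(\lambda_0 x)=0$ need not force $x=0$ --- but this is harmless, since monotonicity $(iii)$ settles that case immediately and your main estimate does not divide by $A$ anyway.)
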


\begin{proof} Note that in order to prove this lemma, it is enough to show that if $\rho(\lambda_{0}x)<\infty$ for some $\lambda_{0}>0$, then $\lim_{\lambda\rightarrow0}\rho(\lambda x)=0$. Let $x\in X$ and $\rho(\lambda_{0}x)<\infty$ for some $\lambda_{0}>0$. We prove that for any $\varepsilon\in(0,\rho(\lambda_{0}x))$ there exists $\delta=\delta(\varepsilon)>0$ such that $\rho(\lambda x)<\varepsilon$, whenever $\lambda<\delta$. Let $\varepsilon\in(0,\rho(\lambda_{0}x))$ be any fixed and take $K=K(\frac{\varepsilon}{2},\rho(\lambda_{0}x))$ from condition $(v)$ of Definition \ref{quasi-mod}. Then for $\lambda<\delta$, where $\delta=\lambda_{0}(\frac{\varepsilon}{2K\rho(\lambda_{0}x)})^{1/p}$, we get
\[\rho\left(\lambda x\right)=\rho\left(\frac{\lambda}{\lambda_{0}}\lambda_{0}x\right)\leq K\left(\frac{\lambda}{\lambda_{0}}\right)^{p}\rho \left(\lambda_{0}x\right)+\varepsilon/2<\varepsilon.\] 
\end{proof}

\begin{theorem}\label{t31}
Let $\rho $ be a quasi-modular on $X$. Then the functional
\begin{equation*}
\left\Vert x\right\Vert _{\rho }=\inf\left\{\lambda>0:\rho\left(x/\lambda\right)\leq1\right\}
\end{equation*}
is a quasi-norm on $X_{\rho}$.
\end{theorem}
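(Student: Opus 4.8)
The plan is to verify the three axioms of Definition \ref{quasi-norm} for the functional $\Vert\cdot\Vert_\rho$ on $X_\rho$, after first checking that it is finite there. Finiteness follows from Lemma \ref{l3p}: if $x\in X_\rho$ then $\rho(\lambda_0 x)<\infty$ for some $\lambda_0>0$, and the estimate in the proof of that lemma gives $\rho(\mu x)\to 0$ as $\mu\to 0^+$, so $\rho(\mu_0 x)\le 1$ for some $\mu_0>0$; hence $1/\mu_0$ lies in the set $\{\lambda>0:\rho(x/\lambda)\le 1\}$, which is therefore non-empty, and $\Vert x\Vert_\rho\le 1/\mu_0<\infty$.

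For axiom $(i)$ I would argue as follows. Clearly $\Vert 0\Vert_\rho=0$ because $\rho(0)=0\le 1$. Conversely, if $\Vert x\Vert_\rho=0$, then for each $\eta>0$ there is some $\lambda\in(0,1/\eta)$ with $\rho(x/\lambda)\le 1$, and since $\eta<1/\lambda$ the monotonicity condition $(iii)$ of Definition \ref{quasi-mod} yields $\rho(\eta x)\le\rho(x/\lambda)\le 1$; as this holds for every $\eta>0$, condition $(i)$ of Definition \ref{quasi-mod} forces $x=0$. Axiom $(ii)$ is routine: for $a\ne 0$ I would use $\rho(-z)=\rho(z)$ (condition $(ii)$) to rewrite $\rho(ax/\lambda)=\rho(|a|x/\lambda)$ and then substitute $\lambda=|a|\mu$ in the defining infimum to get $\Vert ax\Vert_\rho=|a|\,\Vert x\Vert_\rho$; the case $a=0$ is trivial.

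The substantive step is axiom $(iii)$. Given $x,y\in X_\rho$, by axiom $(i)$ I may assume $\Vert x\Vert_\rho+\Vert y\Vert_\rho>0$. Fix $\varepsilon>0$ and, using that the defining sets are non-empty, pick $\lambda_1\in(0,\Vert x\Vert_\rho+\varepsilon)$ and $\lambda_2\in(0,\Vert y\Vert_\rho+\varepsilon)$ with $\rho(x/\lambda_1)\le 1$ and $\rho(y/\lambda_2)\le 1$. Set $s=\lambda_1+\lambda_2$ and write $\tfrac{x+y}{s}$ as the convex combination $\tfrac{\lambda_1}{s}\cdot\tfrac{x}{\lambda_1}+\tfrac{\lambda_2}{s}\cdot\tfrac{y}{\lambda_2}$; condition $(iv)$ then gives $\rho\big(\tfrac{x+y}{s}\big)\le M\big[\rho(x/\lambda_1)+\rho(y/\lambda_2)\big]\le 2M$. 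Next I would invoke condition $(v)$ with $A=2M$ and with $\tfrac12$ in the role of the parameter denoted $\varepsilon$ there: there is $K=K(\tfrac12,2M)\ge 1$ such that $\rho(az)\le Ka^p\rho(z)+\tfrac12$ for all $0<a\le 1$ whenever $\rho(z)\le 2M$. Applying this to $z=\tfrac{x+y}{s}$ with $a_0=(4MK)^{-1/p}\in(0,1]$ gives $\rho\big(a_0\tfrac{x+y}{s}\big)\le Ka_0^{\,p}\cdot 2M+\tfrac12=1$, so $s/a_0$ belongs to the set defining $\Vert x+y\Vert_\rho$ and therefore $\Vert x+y\Vert_\rho\le s/a_0=(4MK)^{1/p}(\lambda_1+\lambda_2)<(4MK)^{1/p}\big(\Vert x\Vert_\rho+\Vert y\Vert_\rho+2\varepsilon\big)$. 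Letting $\varepsilon\to 0^+$ yields axiom $(iii)$ with $C=(4MK)^{1/p}\ge 1$, where $K=K(\tfrac12,2M)$.

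I expect axiom $(iii)$ to be the only real obstacle. Axioms $(i)$ and $(ii)$ use only the ``modular-like'' conditions $(i)$--$(iii)$ of Definition \ref{quasi-mod}, but the quasi-triangle inequality forces one to combine the quasi-convexity $(iv)$, which bounds $\rho$ of the normalized sum by the absolute constant $2M$, with condition $(v)$, whose purpose is exactly the ensuing renormalization: passing from $\rho(z)\le 2M$ back down to $\rho(a_0 z)\le 1$ with the scaling factor $a_0$ bounded below by a constant depending only on $M$, $K$ and $p$. This renormalization is precisely what a general semi-modular need not allow, and it is the reason condition $(v)$ is built into Definition \ref{quasi-mod}.
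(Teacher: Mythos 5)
Your proposal is correct and follows essentially the same route as the paper: finiteness and axioms $(i)$--$(ii)$ are handled from conditions $(i)$--$(iii)$ of Definition \ref{quasi-mod}, and the quasi-triangle inequality is obtained by writing the normalized sum as a convex combination, invoking condition $(iv)$, and then using condition $(v)$ to absorb the resulting constant into the scaling factor. The only difference is bookkeeping: the paper applies $(v)$ termwise with $A=1$ and a prechosen constant $C=\bigl(K/(\tfrac{1}{2M}-\varepsilon)\bigr)^{1/p}$, whereas you apply $(v)$ once to the combined element with $A=2M$, arriving at $C=(4MK)^{1/p}$; both are valid.
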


\begin{proof} The condition $\left\Vert 0\right\Vert _{\rho }=0$ is obvious. Suppose $x\neq 0$. Then, by condition $(i)$, there exists $\lambda>0$ such that $\rho\left(\lambda x\right)>1$ and, by condition $(iii)$, $\left\Vert x\right\Vert_{\rho }\geq 1/\lambda$.

For any $x\in X_{\rho}$ and any $\alpha\in \mathbb R$, exactly the same way as in \cite {mu}, we obtain
\[\|\alpha x\|_{\rho}=\inf\left\{\lambda>0:\rho\left(\frac{\alpha x}{\lambda}\right)\leq 1\right\}=|\alpha|\inf\left\{\lambda/|\alpha|>0:\rho\left(\frac{x}{\lambda/\alpha}\right)\leq 1\right\}=|\alpha|\|x\|_{\rho}.\]
Finally, we prove the quasi-triangle inequality. Let $0<\varepsilon<\frac{1}{2M}$ be fixed and take $K=K(\varepsilon,1)$ (see condition $(iv)$ and $(v)$ of Definition \ref{quasi-mod}). Defining
\[C=\left(\frac{K}{\frac{1}{2M}-\varepsilon}\right)^{1/p},\]
for any $x,y\in X_{\rho }$  and any $\delta>0$, we get 
\begin{eqnarray*}
& &\rho\left(\frac{x+y}{C\left( \left\Vert x\right\Vert_{\rho }+\left\Vert y\right\Vert _{\rho }+\delta\right) }\right)\\
&=&\rho\left(\frac{\left\Vert x\right\Vert _{\rho}+\delta/2}{\left\Vert x\right\Vert _{\rho }+\left\Vert y\right\Vert _{\rho}+\delta}\frac{x}{C(\left\Vert x\right\Vert_{\rho}+\delta/2)}+\frac{\left\Vert y\right\Vert_{\rho}+\delta/2}{\left\Vert x\right\Vert_{\rho}+\left\Vert y\right\Vert_{\rho }+\delta}\frac{y}{C(\left\Vert y\right\Vert _{\rho}+\delta/2)}\right)\\
&\leq&M\left[\rho\left(\frac{x}{C(\left\Vert x\right\Vert _{\rho}+\delta/2)}\right)
+\rho\left(\frac{y}{C(\left\Vert y\right\Vert _{\rho}+\delta/2)}\right) \right]\\
&\leq&M\left[\frac{K}{C^{p}}\cdot\rho\left(\frac{x}{(\left\Vert x\right\Vert _{\rho}+\delta/2)}\right)+\varepsilon
+\frac{K}{C^{p}}\cdot\rho\left(\frac{y}{(\left\Vert y\right\Vert _{\rho}+\delta/2)}\right)+\varepsilon \right]\\
&\leq&2M\left(\frac{K}{C^{p}}+\varepsilon\right)=1,
\end{eqnarray*}
whence%
\begin{equation*}
\left\Vert x+y\right\Vert_{\rho }\leq C\left(\left\Vert x\right\Vert_{\rho }+\left\Vert y\right\Vert_{\rho }+\delta\right).
\end{equation*}
By arbitrariness of $\delta$ we have $\left\Vert x+y\right\Vert_{\rho }\leq C(\left\Vert x\right\Vert_{\rho }+\left\Vert y\right\Vert_{\rho})$.
\end{proof}

By the definition of quasi-norm, we get immediately the following

\begin{lemma}\label{le32} Let $\rho $ be a quasi-modular on $X$. Then for any $x\in X_{\rho}$ the following statements hold:

$(i)$ If $\rho(x)\leq1$, then $\|x\|_{\rho}\leq1$.

$(ii)$ If $\rho$ is left continuous $(\lim_{\lambda\rightarrow1-}\rho(\lambda x)=\rho(x)$ for all $x\in X_{\rho})$, then $\rho(x)\leq1$ whenever $\|x\|_{\rho}\leq1$.

$(iii)$ If $\|x\|_{\rho}<1$, then $\rho(x)\leq1$.

$(iv)$ If $\rho$ is right continuous $(\lim_{\lambda\rightarrow1+}\rho(\lambda x)=\rho(x)$ for all $x\in X_{\rho})$, then $\|x\|_{\rho}<1$ whenever $\rho(x)<1$.
\end{lemma}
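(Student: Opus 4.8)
The plan is to reduce all four items to a single structural fact about the set
\[
S_x := \{\lambda>0 : \rho(x/\lambda)\le 1\},
\]
whose infimum is, by definition, $\|x\|_\rho$. First I would observe that $S_x$ is an up-set: if $\mu\in S_x$ and $\lambda>\mu$, then $x/\lambda=(\mu/\lambda)(x/\mu)$ with $0<\mu/\lambda<1$, so the monotonicity condition $(iii)$ of Definition \ref{quasi-mod} gives $\rho(x/\lambda)\le\rho(x/\mu)\le 1$, i.e.\ $\lambda\in S_x$. Hence $S_x$ is an interval with left endpoint $\|x\|_\rho$, equal to $(\|x\|_\rho,\infty)$ or $[\|x\|_\rho,\infty)$; in particular every $\lambda>\|x\|_\rho$ lies in $S_x$.

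With this in hand, $(i)$ and $(iii)$ are immediate. For $(i)$, the hypothesis $\rho(x)\le 1$ says exactly $1\in S_x$, so $\|x\|_\rho=\inf S_x\le 1$. For $(iii)$, the hypothesis $\|x\|_\rho<1$ gives $1>\inf S_x$, hence $1\in S_x$ because $S_x$ contains everything strictly above its infimum, which means $\rho(x)\le 1$.

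For $(ii)$ I would argue as follows: assuming $\|x\|_\rho\le 1$, every $\lambda>1$ satisfies $\lambda>\|x\|_\rho$, so $\lambda\in S_x$, i.e.\ $\rho((1/\lambda)x)\le 1$. Letting $\lambda\downarrow 1$, the factor $1/\lambda$ increases to $1$; since $\mu\mapsto\rho(\mu x)$ is non-decreasing by $(iii)$, the left limit at $1$ exists as $\sup_{\mu<1}\rho(\mu x)$, and left continuity identifies it with $\rho(x)$, yielding $\rho(x)\le 1$. For $(iv)$, assuming $\rho(x)<1$, right continuity provides $\lambda_0>1$ with $\rho(\lambda_0 x)<1$; then $1/\lambda_0\in S_x$, so $\|x\|_\rho\le 1/\lambda_0<1$.

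The only place that requires any care — and the only place the continuity hypotheses enter — is the limit passage in $(ii)$ and $(iv)$: monotonicity of $\rho(\lambda x)$ in $\lambda$ already guarantees that the one-sided limits at $1$ exist (as a supremum and an infimum, respectively), and left (resp.\ right) continuity is precisely what is needed to replace that limit by $\rho(x)$ and push the inequality through. Everything else is a formal manipulation of the Minkowski functional, which is why the statement follows "immediately" once Theorem \ref{t31} is available.
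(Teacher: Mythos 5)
Your argument is correct: the up-set property of $S_x=\{\lambda>0:\rho(x/\lambda)\le1\}$ follows from condition $(iii)$ of Definition \ref{quasi-mod}, and the four items then follow exactly as you describe, with monotonicity plus one-sided continuity handling $(ii)$ and $(iv)$. This is precisely the ``immediate from the definition of the quasi-norm'' argument the paper has in mind (it offers no written proof), so your proposal simply spells out the same standard reasoning.
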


\begin{remark}\label{re1}{\rm As we will show in Example \ref{e1} $(iv)$ the implication, if $\|x\|_{\rho}<1$ then $\rho(x)<1$, is not always true. Recall, that this implication is true for any modular as well as for any convex modular (see \cite{mu})}.
\end{remark}

\begin{lemma}\label{le33} For any sequence $\left( x_{n}\right) $ in $X_{\rho }$ we have $\lim_{n\rightarrow\infty}\left\Vert x_{n}\right\Vert_{\rho }=0$ if and only if $\lim_{n\rightarrow\infty}\rho\left(\lambda x_{n}\right)=0$ for all $\lambda>0$.
\end{lemma}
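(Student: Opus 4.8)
The plan is to prove both implications directly from the definition of $\|\cdot\|_\rho$ together with properties $(iii)$ and $(v)$ of the quasi-modular. For the implication $\Rightarrow$, suppose $\|x_n\|_\rho \to 0$ and fix $\lambda>0$. I would show $\rho(\lambda x_n)\to 0$. Choose $N$ so large that $\|x_n\|_\rho < \tfrac{1}{2\lambda}$ for $n\geq N$; then $2\lambda\|x_n\|_\rho < 1$, so $\|2\lambda x_n\|_\rho < 1$, and by Lemma \ref{le32}$(iii)$ we get $\rho(2\lambda x_n)\leq 1$. Now write $\lambda x_n = a\,(2\lambda x_n)$ with $a = \tfrac12 \in (0,1]$. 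Fix $\varepsilon>0$; applying condition $(v)$ with $A=1$ and this $\varepsilon$ gives $K=K(\varepsilon,1)\geq 1$ with
\[
\rho(\lambda x_n) \leq K \left(\tfrac12\right)^{p}\rho(2\lambda x_n) + \varepsilon \leq K\left(\tfrac12\right)^{p}\cdot 1 + \varepsilon
\]
for $n\geq N$. This bound is not yet small, so the correct move is instead to iterate: for any $m\in\mathbb N$, once $\|x_n\|_\rho < 2^{-m}/\lambda$ we have $\rho(2^m\lambda x_n)\leq 1$ and then $\rho(\lambda x_n)=\rho(2^{-m}\cdot 2^m\lambda x_n)\leq K\,2^{-mp}\rho(2^m\lambda x_n)+\varepsilon \leq K\,2^{-mp}+\varepsilon$. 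Given $\varepsilon>0$ first pick $m$ with $K2^{-mp}<\varepsilon$, then pick $N$ with $\|x_n\|_\rho<2^{-m}/\lambda$ for $n\geq N$; this yields $\rho(\lambda x_n)<2\varepsilon$ for $n\geq N$, proving $\rho(\lambda x_n)\to 0$.

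For the converse $\Leftarrow$, suppose $\rho(\lambda x_n)\to 0$ for every $\lambda>0$; I want $\|x_n\|_\rho\to 0$. Fix $\eta>0$. Apply the hypothesis with $\lambda = 1/\eta$: there is $N$ so that $\rho(x_n/\eta)\leq 1$ for all $n\geq N$. By the definition of the Minkowski functional, $\rho(x_n/\eta)\leq 1$ forces $\|x_n\|_\rho \leq \eta$. Since $\eta>0$ was arbitrary, $\|x_n\|_\rho \to 0$. This direction is essentially immediate and uses only the definition of $\|\cdot\|_\rho$ (not even monotonicity beyond what is already baked into the definition), while the forward direction is where the real work lies.

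The main obstacle is the forward implication: a single application of $(v)$ only gives $\rho(\lambda x_n)\lesssim K(\varepsilon,1)\left(\tfrac12\right)^p+\varepsilon$, which does not tend to $0$ because $K(\varepsilon,1)$ grows as $\varepsilon\to 0$. The resolution is to exploit that $\|x_n\|_\rho\to 0$ lets us make the scaling factor $a=2^{-m}$ as small as we like, so that $K\,a^{p}=K2^{-mp}$ can be driven below any prescribed threshold by first fixing $\varepsilon$ (hence $K=K(\varepsilon,1)$) and only then choosing $m$ large; the order of quantifiers $p$ (fixed), then $\varepsilon$, then $K$, then $m$, then $N$ is the crux. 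One should also note the degenerate possibility $\rho(x_n/\eta)=\infty$ in the converse is harmless since membership $x_n\in X_\rho$ together with Lemma \ref{l3p} and monotonicity $(iii)$ is not even needed — we directly invoke the hypothesis at the specific scale $\lambda=1/\eta$.
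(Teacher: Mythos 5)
Your proof is correct and follows essentially the same route as the paper: the easy direction is immediate from the definition of $\left\Vert\cdot\right\Vert_{\rho}$, and the forward direction hinges on fixing $\varepsilon$ first (hence $K=K(\varepsilon,1)$ from condition $(v)$), then using $\left\Vert x_{n}\right\Vert_{\rho}\rightarrow 0$ to make the scaling factor $a$ so small that $Ka^{p}<\varepsilon$ while $\rho(\lambda x_{n}/a)\leq 1$ via Lemma \ref{le32}$(iii)$. The only difference is cosmetic: you take dyadic scales $a=2^{-m}$, whereas the paper chooses $a$ directly in the interval $\left((\varepsilon/4K)^{1/p},(\varepsilon/2K)^{1/p}\right)$.
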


\begin{proof} The implication, if $\lim_{n\rightarrow\infty}\rho\left(\lambda x_{n}\right)=0$ for all $\lambda>0$ then $\lim_{n\rightarrow\infty}\left\Vert x_{n}\right\Vert_{\rho }=0$ is obvious. Let now $\|x_{n}\|_{\rho}\rightarrow0$. Fix $\lambda>0$ and $\varepsilon\in(0,1)$ and let $K=K(\varepsilon/2,1)$ be the constant from condition $(v)$ of Definition \ref{quasi-mod}. Then there exists $n_{\lambda,\varepsilon}$ such that $\|\lambda x_{n}\|_{\rho}\leq(\varepsilon/4K)^{1/p}$ for any $n\geq n_{\lambda,\varepsilon}$. Hence for any $a\in((\varepsilon/4K)^{1/p},(\varepsilon/2K)^{1/p})$ we obtain 
\[\rho(\lambda x_{n})=\rho\left(a\frac{\lambda x_{n}}{a}\right)\leq Ka^{p}\rho\left(\frac{\lambda x_{n}}{a}\right)+\frac{\varepsilon}{2}\leq\varepsilon.\]
By arbitrariness of $\varepsilon$, for any $\lambda>0$ we get $\lim_{n\rightarrow\infty}\rho(\lambda x_{n})=0$.
\end{proof}

\section{Quasi-normed Calderón--Lozanowski\u{\i} spaces}

A triple $(T,\Sigma,\mu)$ stands for a positive, complete and $\sigma$-finite measure space and $L^{0}=L^{0}(T,\Sigma,\mu)$ denotes the space of all (equivalence classes of) $\Sigma$-measurable functions $x:T\rightarrow\mathbb R$. For every $x\in L^{0}$ we denote $\supp x=\{ t\in T:x(t)\neq0\}$. Moreover, for any $x,y\in L^{0}$, we write $x\leq y$, if $x(t)\leq y(t)$ almost everywhere with respect to the measure $\mu$ on the set $T$.

A quasi-normed lattice [quasi-Banach lattice] $E=(E,\leq,\Vert\cdot\Vert _{E})$ is called a \textit{quasi-normed ideal space} [\textit{quasi-Banach ideal space} (or a \textit{quasi-Köthe space})] if it is a linear subspace of $L^{0}$ satisfying the following conditions:
\begin{itemize}
\item[$(i)$] If $x\in L^{0}$, $y\in E$ and $|x|\leq |y|$ $\mu$-a.e., then $x\in E $ and $\Vert x\Vert _{E}\leq \Vert y\Vert _{E}$.
\item[$(ii)$] There exists $x\in E$ which is strictly positive on the whole $T$.
\end{itemize}
By $E_{+}$ we denote the positive cone of $E$, that is, $E_{+}={\{x\in E:x\geq 0\}}$. Let $C_{E}$ be the constant from the quasi-triangle inequality for $E$. As usually the symbol $\supp E$ stands a support of $E$ (see \cite{ma}*{p. 169}). Whereas, by $E(w)$ we denote the weighted quasi-normed ideal space, that is,
\[E(w)=\{x\in L^{0}\colon xw\in E\}\]
with the norm $\|x\|_{E(w)}=\|xw\|_{E}$, where $w:T\rightarrow(0,\infty)$ is a measurable weight function.

In the case of quasi-Banach ideal space, we can give equivalent definition of order continuity. Namely, a quasi-Banach ideal space $E$ is called \textit{order continuous} if and only if for every element $x\in E$ and each sequence $\left(x_{n}\right)$ in $E$ such that $0\leq x_{n}\leq\left\vert x\right\vert $ and $x_{n}\rightarrow 0$ $\mu$-a.e.\ we have $\left\Vert x_{n}\right\Vert_{E}\rightarrow0$ (see \cite{Kol2018-Posit}*{Theorem 2.1}). 

An element $x\in E$ is said to be order continuous if for any sequence $(x_{n})$ in $E$ such that $0\leq x_{n}\leq \left\vert x\right\vert $ and $x_{n}\rightarrow 0$ $\mu$-a.e.\ we have $\left\Vert x_{n}\right\Vert_{E}\rightarrow 0$. The subspace $E_{a}$ of all order continuous elements in $E$ is an order ideal in $E$. Obviously, the space $E$ is order continuous if and only if $E_{a}=E$.

We say that quasi-normed ideal space $E$ has the \textit{Fatou property}, if for any $x\in L^{0}$ and any $\left( x_{n}\right) _{n=1}^{\infty }$ in $E_{+}$ such that  $x_{n}\uparrow|x|$ $\mu$-a.e and $\sup_{n\in {N}}\Vert x_{n}\Vert _{E}<\infty$, we get $x\in E$ and $\lim_{n}\Vert x_{n}\Vert_{E}=\Vert x\Vert _{E}$. It is well known, that $E$ has the Fatou property if and only if for any $x\in L^{0}$ and any $\left( x_{n}\right) _{n=1}^{\infty }$ in $E$ such that  $x_{n}\rightarrow x$ $\mu$-a.e and $\liminf_{n\in {N}}\Vert x_{n}\Vert _{E}<\infty$, we have $x\in E$ and $\Vert x\Vert _{E}\leq\liminf_{n}\Vert x_{n}\Vert_{E}$ (cf. \cite{bs}*{Lemma 1.5 on page 4}).

\begin{lemma}\label{l4-1}Let $E$ be quasi-normed ideal space.
\begin{itemize}
\item[$(i)$] If $\lim_{n\rightarrow\infty}\|x-x_{n}\|_{E}=0$, where $x\in E$ and $(x_{n})_{n=1}^{\infty}$ is a sequence in $E$, then $x_{n}\rightarrow x$ locally in measure.
\item[$(ii)$] For any Cauchy sequence $(x_{n})_{n=1}^{\infty}$ in $E$ there exists $x\in L^{0}$ such that $x_{n}\rightarrow x$ locally in measure.
\end{itemize}
\end{lemma}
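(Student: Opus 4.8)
The two assertions are classical for Banach ideal spaces, and the adaptation to the quasi-normed case rests on the Aoki--Rolewicz theorem: by \eqref{p-norma} we may replace $\|\cdot\|_E$ by an equivalent $p$-norm $\|\cdot\|_1$, and since the metric $d(x,y)=\|x-y\|_1^p$ is genuinely subadditive, the usual countable-subsequence arguments go through. So the first move is to fix $0<p\le 1$ with $C_E=2^{1/p-1}$ and pass to the equivalent $p$-norm; all estimates below are carried out with $\|\cdot\|_1$ and at the end translated back via \eqref{p-norma}.

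For part $(i)$: I would show that $x_n\to x$ in measure on every set $A\in\Sigma$ with $\mu(A)<\infty$. Suppose not; then for some such $A$ and some $\varepsilon,\eta>0$ there is a subsequence $(x_{n_k})$ with $\mu(A_k)\ge\eta$ where $A_k=\{t\in A:|x(t)-x_{n_k}(t)|\ge\varepsilon\}$. Fix a weak unit $e\in E$ strictly positive on $T$ (condition $(ii)$ in the definition of a quasi-normed ideal space); replacing $A$ by $A\cap\{e\ge\tfrac1m\}$ for $m$ large, we may assume $e\ge c>0$ on $A$. Then $\varepsilon\,c\,\chi_{A_k}\le |x-x_{n_k}|\,e^{-1}\cdot e\le$ \dots more simply, $|x-x_{n_k}|\ge\varepsilon\chi_{A_k}$ on $A$, so by the ideal property $\|x-x_{n_k}\|_1\ge\varepsilon\|\chi_{A_k}\|_1\ge\varepsilon\inf\{\|\chi_B\|_1:B\subset A,\ \mu(B)\ge\eta\}$, and this infimum is strictly positive because $\chi_B\ge (\eta/\mu(A))\,$-th part of $\chi_A$ in a suitable sense — precisely, $\|\chi_B\|_1>0$ since $B$ has positive measure and $E$ contains a function strictly positive on $T$, and a standard exhaustion shows the infimum over all $B\subset A$ with $\mu(B)\ge\eta$ is positive. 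This contradicts $\|x-x_{n_k}\|_1\to0$, proving $(i)$.

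For part $(ii)$: passing to a subsequence, assume $\|x_{n+1}-x_n\|_1\le 2^{-n}$. Fix $A\in\Sigma$ with $\mu(A)<\infty$; I claim $(x_n|_A)$ is Cauchy in $\mu$-measure on $A$, indeed converges $\mu$-a.e.\ on $A$ after a further subsequence. From $\|x_{n+1}-x_n\|_1\le 2^{-n}$ and the ideal property, $\mu\{t\in A:|x_{n+1}(t)-x_n(t)|\ge\varepsilon_n\}$ can be controlled: choosing $\varepsilon_n\to0$ appropriately (using again that the infimum of $\|\chi_B\|_1$ over $B\subset A$ of fixed positive measure is positive, so small $\|\cdot\|_1$ forces small measure of the set where $|x_{n+1}-x_n|$ is not small), one gets $\sum_n\mu\{t\in A:|x_{n+1}(t)-x_n(t)|\ge\varepsilon_n\}<\infty$ with $\sum_n\varepsilon_n<\infty$, so Borel--Cantelli yields a.e.\ convergence on $A$ of the telescoping series, hence pointwise a.e.\ convergence of $(x_n)$ on $A$. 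Doing this for an increasing sequence $A_m\uparrow T$ with $\mu(A_m)<\infty$ (such a sequence exists by $\sigma$-finiteness) and using a diagonal subsequence, we obtain $x\in L^0$ with $x_n\to x$ $\mu$-a.e.; convergence a.e.\ on each finite-measure set implies convergence locally in measure there. The original Cauchy sequence, having a subsequence converging locally in measure to $x$, itself converges locally in measure to $x$ by a routine Cauchy-in-measure argument.

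\textbf{Main obstacle.} The one point requiring care — and the reason the ideal-space hypothesis is used — is the quantitative link between smallness of $\|u\|_E$ (equivalently $\|u\|_1$) and smallness of $\mu(\{|u|\ge\varepsilon\}\cap A)$ on a fixed finite-measure set $A$. In a general quasi-normed ideal space there is no uniform such bound over all of $T$, but on a set $A$ with $\mu(A)<\infty$ it holds because $\gamma_A(\eta):=\inf\{\|\chi_B\|_E:B\in\Sigma,\ B\subset A,\ \mu(B)\ge\eta\}>0$ for every $\eta>0$; establishing this positivity (via an exhaustion argument together with the Fatou-type lower semicontinuity implicit in the ideal property, or directly from the existence of a strictly positive element of $E$) is the technical heart of both parts. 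Everything else is the standard measure-theoretic machinery (Borel--Cantelli, diagonal extraction, Cauchy-in-measure) transplanted verbatim to the $p$-normed setting.
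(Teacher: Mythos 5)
Your architecture (pass to finite-measure sets, contradiction for $(i)$, telescoping subsequence plus a.e.\ convergence and a diagonal for $(ii)$) is the standard one, but the step you yourself label the technical heart --- the uniform positivity $\gamma_A(\eta)=\inf\{\|\chi_B\|_E: B\subset A,\ \chi_B\in E,\ \mu(B)\ge\eta\}>0$ --- is asserted, not proved, and the justifications you offer do not give it. Positivity of each individual $\|\chi_B\|_E$ says nothing about the infimum; no ``Fatou-type lower semicontinuity'' is available, since the lemma is stated for an arbitrary quasi-normed ideal space (the Fatou property only enters from Lemma \ref{l4-2} onwards); and the existence of a strictly positive element of $E$ alone does not yield the bound. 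The claim is in fact true, but its proof is exactly the Kantorovich--Akilov argument the paper adapts: if $B_k\subset A$, $\mu(B_k)\ge\eta$ and $\|\chi_{B_k}\|_E\le 2^{-k}C_E^{-k}$, put $D=\limsup_k B_k$ (so $\mu(D)\ge\eta$, since everything sits in $A$ with $\mu(A)<\infty$), choose finite unions $C_m=\bigcup_{k=m+1}^{m+s_m}B_k$ with $\mu\bigl(\bigcup_{k>m}B_k\setminus C_m\bigr)<\eta 2^{-m-2}$, and set $D'=\bigcap_m (D\cap C_m)$; then $\mu(D')\ge\eta/2$, while $\chi_{D'}\le\sum_{k=m+1}^{m+s_m}\chi_{B_k}$ gives $\|\chi_{D'}\|_E\le\sum_{k=m+1}^{m+s_m}C_E^{k-m}\|\chi_{B_k}\|_E<2^{-m}$ for every $m$, hence $\chi_{D'}=0$ a.e., a contradiction. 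This decreasing-intersection trick with the inserted powers of $C_E$ is precisely what the paper's proof (via \cite{ka}, Theorem 1, p.~96) supplies, and it is the missing content in your write-up; without it neither part is established.

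Two secondary points. First, in $(ii)$ the qualitative fact $\gamma_A(\eta)>0$ carries no rate, so fixing $\|x_{n+1}-x_n\|_1\le 2^{-n}$ in advance and then ``choosing $\varepsilon_n$ appropriately'' does not yield $\sum_n\mu\{|x_{n+1}-x_n|\ge\varepsilon_n\}<\infty$; you must either interlace the choice of the subsequence with the modulus (e.g.\ demand $\|x_{n_{k+1}}-x_{n_k}\|_1<2^{-k}\gamma_{A_m}(2^{-k})$ and diagonalize over $m$), or drop Borel--Cantelli and instead show the sequence is Cauchy in measure on each $A_m$ and invoke the Riesz completeness of convergence in measure. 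Second, the Aoki--Rolewicz $p$-norm of \eqref{p-norma} is not automatically monotone, so your uses of the ideal property with $\|\cdot\|_1$ need the lattice version of that renorming; it is simpler to work with $\|\cdot\|_E$ directly and absorb the constants $C_E^{k}$, as the paper does. Both of these are routine repairs, but the positivity claim itself must be proved, and once you prove it you will have rewritten the paper's argument.
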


\begin{proof} This lemma can be proved analogously as Theorem 1 on page 96 in \cite{ka}. Indeed, assuming in (2) on page 96  that $\|x_{n}-x\|_{E}<\frac{\varepsilon}{2^{n}C_{E}^{n}}$, we obtain $\|\chi_{B_{n}}\|_{E}<\frac{1}{2^{n}C_{E}^{n}}$ (see (5) on page 96) and, in consequence, 
\[\|\chi_{D_{n}}\|_{E}\leq\|\chi_{C_{ms_{m}}}\|_{E}\leq\left\|\sum_{k=m+1}^{m+s_{m}}\chi_{B_{k}}\right\|_{E}\leq\sum_{k=m+1}^{m+s_{m}}C_{E}^{(k-m)}\|\chi_{B_{k}}\|_{E}<\sum_{k=m+1}^{m+s_{m}}\frac{C_{E}^{(k-m)}}{2^{k}C_{E}^{k}}<\frac{1}{2^{m}},\]
(see page 97, line 5).\end{proof}

\begin{lemma}\cite{kmp}*{Lemma 2.1}\label{l4-2} A quasi-normed ideal space $E$ with the Fatou property is complete.\end{lemma}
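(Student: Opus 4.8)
The plan is to imitate the classical proof that a Banach function space with the Fatou property is complete, but keeping careful track of the quasi-triangle constant $C_E$. First I would take a Cauchy sequence $(x_n)_{n=1}^\infty$ in $E$. Passing to a subsequence, I may assume $\|x_{n+1}-x_n\|_E \le 2^{-n}/C_E^{n}$ (or any summable-after-weighting choice); by Lemma~\ref{l4-1}(ii) there is $x \in L^0$ with $x_n \to x$ locally in measure. The goal is then to show $x \in E$ and $\|x-x_n\|_E \to 0$.

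Next I would estimate the partial sums of the telescoping series. For fixed $n$ and $m > n$ one has, iterating the quasi-triangle inequality,
\[
|x_m - x_n| \le \sum_{k=n}^{m-1} |x_{k+1}-x_k|
\]
pointwise, hence
\[
\|x_m - x_n\|_E \le \sum_{k=n}^{m-1} C_E^{\,k-n+1}\,\|x_{k+1}-x_k\|_E \le \sum_{k=n}^{m-1} C_E^{\,k-n+1}\,\frac{1}{2^{k}C_E^{k}} \le \frac{2^{-n+1}C_E}{1-1/2},
\]
which is bounded uniformly in $m$ (and small when $n$ is large). So the functions $y_{n,m} := |x_m - x_n|$, $m > n$, have $E$-norm bounded by a constant $R_n$ with $R_n \to 0$. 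Since $x_m \to x$ locally in measure, a further subsequence converges $\mu$-a.e., so $|x_m - x_n| \to |x - x_n|$ $\mu$-a.e. along that subsequence; applying the Fatou property (in its $\liminf$ form quoted in the excerpt) gives $x - x_n \in E$ with $\|x-x_n\|_E \le \liminf_m \|x_m - x_n\|_E \le R_n$. In particular $x = (x-x_n) + x_n \in E$, and $\|x-x_n\|_E \le R_n \to 0$ as $n \to \infty$.

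Finally I would upgrade the subsequential convergence to convergence of the whole sequence: since $(x_n)$ is Cauchy and a subsequence converges in $E$ to $x$, the standard argument (again using the quasi-triangle inequality, $\|x - x_n\|_E \le C_E(\|x - x_{n_k}\|_E + \|x_{n_k} - x_n\|_E)$ for $n_k$ large) shows $\|x - x_n\|_E \to 0$. This proves $E$ is complete.

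The main obstacle is purely bookkeeping: because $E$ is only quasi-normed, the triangle inequality introduces the factors $C_E^{\,k-n+1}$ when estimating the telescoping sum, so the usual choice $\|x_{n+1}-x_n\|_E \le 2^{-n}$ must be replaced by something like $\|x_{n+1}-x_n\|_E \le 2^{-n}C_E^{-n}$ to keep the geometric series convergent; this is exactly the device already used in the proof of Lemma~\ref{l4-1} in the excerpt. Beyond this, one must be slightly careful that "convergence locally in measure" only yields an a.e.-convergent subsequence, so the Fatou property must be applied along that subsequence rather than the original one — but since the $E$-norm bound $R_n$ is uniform over all $m$, this causes no loss.
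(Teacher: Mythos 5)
Your argument is correct and follows essentially the same route as the paper: Lemma~\ref{l4-1}$(ii)$ gives a local-in-measure limit $x$ of the Cauchy sequence, one passes to a $\mu$-a.e.\ convergent subsequence, and the $\liminf$ form of the Fatou property yields $x\in E$ and $\|x-x_n\|_E\le\liminf_m\|x_m-x_n\|_E$, which tends to $0$. The only divergence is cosmetic: your selection of a rapidly converging subsequence and the telescoping estimate with the factors $C_E^{k-n+1}$ are superfluous, since Cauchyness alone already makes $\liminf_m\|x_m-x_n\|_E$ small for large $n$ (this is exactly how the paper concludes; it also invokes the Aoki--Rolewicz theorem only to reduce completeness for the underlying metric to quasi-norm convergence of Cauchy sequences, a reduction you use implicitly).
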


\begin{proof} We recall a short proof of this lemma for the sake of completeness. By the Aoki-Rolewicz theorem, it is enough to show that for any Cauchy sequence $(x_{n})_{n=1}^{\infty}$ in $E$ there exists $x\in E$ such that $\lim_{n\rightarrow\infty}\|x-x_{n}\|_{E}=0$. If $(x_{n})_{n=1}^{\infty}$ is a Cauchy sequence in $E$, then by Lemma \ref{l4-1}$(ii)$, $x_{n}\rightarrow x$ locally in measure for some $x\in L^{0}$. Without loss of generality (passing to subsequences and applying the double extract convergence theorem, if necessary), we can assume $x_{n}\rightarrow x$ $\mu$-a.e. Hence by the Fatou property, we get $x\in E$ and $\|x-x_{n}\|_{E}\leq\liminf_{m\rightarrow\infty}\|x_{m}-x_{n}\|_{E}$ for any $n\in\mathbb N$, which ends the proof.  
\end{proof}

The following basic fact, very well known for Banach ideal spaces (see \cite{ka}*{Lemma 2, p.\ 97}), it also true for quasi-Banach ideal spaces.

\begin{lemma}
Let $\left( E,\left\Vert \cdot \right\Vert _{E}\right) $ be a quasi-Banach ideal space. If $\left\Vert x_{n}\right\Vert _{E}\rightarrow 0,$ then
there exists a subsequence $\left( x_{n_{k}}\right) _{k=1}^{\infty }$, an element $y\in E_{+}$ and a sequence $\varepsilon _{k}\downarrow 0$ such that $\left\vert x_{n_{k}}\right\vert \leq \varepsilon _{k}\cdot y$ for each $k$.
\end{lemma}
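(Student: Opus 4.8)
The plan is to reduce this quasi-normed statement to a Banach-lattice style argument by first passing to an equivalent $p$-norm via the Aoki--Rolewicz theorem, and then to use completeness of $E$ together with Lemma \ref{l4-1}$(i)$ in the role that the Fatou property (or monotone completeness) plays in the classical Banach ideal space proof of \cite{ka}*{Lemma 2, p.\ 97}.

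First I would fix, as in \eqref{p-norma}, a $p$-norm $\Vert\cdot\Vert_{1}$ on $E$ with $0<p\le 1$ that is equivalent to $\Vert\cdot\Vert_{E}$, so that $d(u,v)=\Vert u-v\Vert_{1}^{p}$ is the metric for which $E$ is complete. Since $\Vert x_{n}\Vert_{E}\to 0$, after passing to a subsequence $(x_{n_{k}})_{k=1}^{\infty}$ we may assume $\Vert x_{n_{k}}\Vert_{E}\le 4^{-k}$ for all $k$, and hence, by $(i)$ of the definition of a quasi-normed ideal space and \eqref{p-norma}, $\Vert\,|x_{n_{k}}|\,\Vert_{1}\le\Vert\,|x_{n_{k}}|\,\Vert_{E}=\Vert x_{n_{k}}\Vert_{E}\le 4^{-k}$. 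Consequently $\Vert 2^{k}|x_{n_{k}}|\Vert_{1}^{p}=2^{kp}\Vert\,|x_{n_{k}}|\,\Vert_{1}^{p}\le 2^{-kp}$, and since $\sum_{k}2^{-kp}<\infty$ the partial sums $s_{m}=\sum_{k=1}^{m}2^{k}|x_{n_{k}}|$ satisfy $d(s_{m},s_{m'})\le\sum_{k>\min(m,m')}2^{-kp}\to 0$, i.e.\ $(s_{m})$ is Cauchy in $(E,d)$. By completeness there is $y\in E$ with $\Vert y-s_{m}\Vert_{E}\to 0$; since each $s_{m}\ge 0$ and $E$ is a lattice, $y\ge 0$, so $y\in E_{+}$.

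It then remains to identify $y$ with the pointwise supremum of the $s_{m}$ and read off the domination. By Lemma \ref{l4-1}$(i)$, $s_{m}\to y$ locally in measure; on the other hand $(s_{m})$ is pointwise non-decreasing, hence converges $\mu$-a.e.\ to $\sup_{m}s_{m}\in L^{0}\cup\{\infty\}$, and uniqueness of the limit in measure forces $\sup_{m}s_{m}=y$ $\mu$-a.e.\ (in particular the supremum is finite a.e.), so $s_{m}\uparrow y$ $\mu$-a.e. Therefore $2^{k}|x_{n_{k}}|\le s_{k}\le y$ $\mu$-a.e.\ for every $k$, that is $|x_{n_{k}}|\le\varepsilon_{k}y$ with $\varepsilon_{k}=2^{-k}\downarrow 0$. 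The only point requiring genuine care — and the one I expect to be the main (mild) obstacle — is exactly this last identification: in a quasi-Banach ideal space one cannot invoke the Fatou property for free, which is why the $p$-norm (to make the series truly convergent in the complete metric) and Lemma \ref{l4-1}$(i)$ (to recover the $\mu$-a.e.\ inequality from norm convergence) are needed in its place; the rest of the argument is routine.
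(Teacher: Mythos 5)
Your proposal is correct and follows essentially the same route as the paper's proof: pass to a rapidly decaying subsequence, form a weighted series of the absolute values, and use completeness of the quasi-Banach ideal space to obtain the dominating element $y$. The only difference is cosmetic: the paper uses weights $k$ (so $\varepsilon_k=1/k$) and cites Theorem 1.1 of \cite{ma04} for the convergence of the series, whereas you re-derive that convergence by hand through the Aoki--Rolewicz $p$-norm and make the a.e.\ identification of the limit with the pointwise supremum explicit via Lemma \ref{l4-1}$(i)$.
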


\begin{proof}
If $\left\Vert x_{n}\right\Vert_{E}\rightarrow0$, then we can find a subsequence $\left( x_{n_{k}}\right) _{k=1}^{\infty }$ such that $\left\Vert x_{n_{k}}\right\Vert<\frac{1}{C^{k}_{E}2^{k}}$ for any $k\in\mathbb N$. Consequently,
\begin{equation*}
\sum_{k=1}^{\infty}C_{E}^{k}\left\Vert k\cdot x_{n_{k}}\right\Vert\leq\sum_{k=1}^{\infty}C^{k}_{E}\frac{k}{C^{k}_{E}2^{k}}=\sum_{k=1}^{\infty }\frac{k}{2^{k}}<\infty .
\end{equation*}
By Theorem 1.1 from \cite{ma04}, $y:=\sum\nolimits_{k=1}^{\infty}\left\vert k\cdot x_{n_{k}}\right\vert\in E_{+}$. Thus $\left\vert x_{n_{k}}\right\vert\leq\frac{y}{k}$ for all $k$. Taking $\varepsilon_{k}=1/k$ we finish the proof.
\end{proof}

{\bf From then on, we will assume that $E$ is a quasi-Banach ideal space with the Fatou property.} During our studies we will consider three natural classes of  $E$:\newline
$\left( 1\right) $ neither $L_{\infty}\subset E$ nor $E\subset L_{\infty}$,\newline
$\left( 2\right) $ $L_{\infty}\subset E$,\newline
$\left( 3\right) $ $E\subset L_{\infty}$.

Let $T=[0,\gamma)$, $0<\gamma\leq\infty$, and $\mu$ be Lebesgue measure. At the time, the space $E=L_{p}$, $0<p<\infty$, belongs to the class $(1)$ if $\gamma=\infty$ and the class $(2)$ otherwise. Moreover, then the space $L_{1}\cap L_{\infty}$ belongs to the class $(3)$ whenever $\gamma=\infty$. Let now $T=\mathbb{N}$ and $\mu=m$ be a counting measure. Then the space $l_{p}$, $p\in(0,\infty)$, belongs to class $(3)$,  the weighted sequence space $l_{1}(w)$, $w=(w(n))_{n=1}^{\infty}$ and $\sum_{n=1}^{\infty}w(n)<\infty$, belongs to class $(2)$ and Ces\`{a}ro sequence space $ces_{p}$, $1<p<\infty$ (see \cite{kkm} for the respective definition), belongs to class $(1)$.

\begin{remark}\label{r4-0} {\rm Let $E\subset L_{\infty }$. Then, by the closed graph theorem which is still true for quasi-Banach spaces (see \cite{KPR84}*{Theorem. 1.6}), the inclusion is continuous, whence there exists a constant $D_{E}>0$ such that 
\begin{equation}
\left\Vert x\right\Vert_{L_{\infty}}\leq D_{E}\left\Vert x\right\Vert_{E}
\label{wloz}
\end{equation}
for each $x\in E$. Defining
\begin{equation}\label{stala}
a_{E}=\inf\left\{\left\Vert\chi_{A}\right\Vert_{E}:\chi_{A}\in E,\mu\left(A\right)>0\right\},
\end{equation}
by \eqref{wloz}, we have $a_{E}\geq1/D_{E}>0$.
}\end{remark}

\begin{definition}\label{of} A function $\varphi :[0,\infty )\rightarrow[0,\infty]$ is called an \textit{Orlicz function} if  $\varphi$ is non-decreasing, vanishing  and right continuous at $0$, continuous on $(0,b_{\varphi})$, where
\[b_{\varphi }=\sup \left\{ u\geq 0:\varphi \left( u\right)<\infty\right\}\]
and left continuous at $b_{\varphi}$. In the whole paper, excluding Remark \ref{r43} and Example \ref{e1} $(iii)$, we will assume that $\lim_{u\rightarrow\infty}\varphi(u)=\infty$.\end{definition}

Let 
\[a_{\varphi}=\sup\left\{u\geq0:\varphi\left(u\right) =0\right\}.\]
By $\varphi^{-1}$ we denote the generalized inverse of the function $\varphi$ defined by 
\[\varphi^{-1}(v)=\inf\{u\geq0\colon\varphi(u)>v\}\text{ for }v\in[0,\infty)\text{ and }\varphi^{-1}(\infty)=\lim_{v\rightarrow\infty}\varphi^{-1}(v)\] 
(see \cite{n} and \cite{klm}).

The following lemma is an easy exercise (see \cite{klm}*{Lemma 3.1}, for a little different convex case).

\begin{lemma}
\label{skladanie}For any Orlicz function $\varphi$ we have: 
\begin{itemize}
\item[$\left(i\right)$] Let $u\in[0,b_{\varphi})$. Then $\varphi^{-1}(\varphi(u))>u$ if $\varphi$ is constant on the interval $[u,u+\delta) $ for some $\delta>0$ and $\varphi^{-1}(\varphi(u))=u$ otherwise.
\item[$\left(ii\right)$] If $b_{\varphi}<\infty$, then  $\varphi^{-1}(\varphi(b_{\varphi}))=b_{\varphi}$ and $\varphi^{-1}(\varphi(u))=b_{\varphi}<u$ for $b_{\varphi}<u<\infty$.
\item[$\left(iii\right)$] If either $b_{\varphi}=\infty$ or $b_{\varphi}<\infty$ with $\varphi\left(b_{\varphi}\right)=\infty$, then $\varphi(\varphi^{-1}(u))=u$ for any $u\in[0,\infty)$.
\item[$\left(iv\right)$] If $b_{\varphi}<\infty$ and $\varphi\left(b_{\varphi}\right)<\infty$, then $\varphi(\varphi^{-1}(u))=u$ for $u\in[0,\varphi(b_{\varphi})]$ and $\varphi(\varphi^{-1}(u))=\varphi(b_{\varphi})<u$ for $u>\varphi(b_{\varphi})$.
\end{itemize}
From $(i)$ and $(ii)$, in particular, we get:
\begin{itemize}
\item[$\left(v\right)$] $\varphi^{-1}(\varphi(u))=u$ for $a_{\varphi}\leq u<b_{\varphi}$ if either $b_{\varphi}=\infty$ or $b_{\varphi}<\infty$
with $\varphi\left(b_{\varphi}\right)=\infty$ and $\varphi$ is strictly increasing on $[a_{\varphi },b_{\varphi})$.
\item[$\left(vi\right)$] $\varphi^{-1}(\varphi(u))=u$ for $a_{\varphi}\leq u\leq b_{\varphi}$ if $b_{\varphi}<\infty $ and $\varphi\left(b_{\varphi}\right)<\infty$ and $\varphi$ is strictly increasing on  $[a_{\varphi},b_{\varphi}]$. 
\end{itemize}
Finally, note that from $(iii)$--$(iv)$ and $(i)$--$(ii)$ we obtain
\begin{itemize}
\item[$\left(vii\right)$] $\varphi(\varphi^{-1}(u))\leq u$ for all $u\in[0,\infty)$ and $u\leq\varphi^{-1}(\varphi (u))$ if $\varphi (u)<\infty$.
\end{itemize}
\end{lemma}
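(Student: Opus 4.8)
The lemma is a routine exercise on the generalized inverse; the plan is to reduce all seven items to two elementary observations together with the prescribed one-sided continuity of $\varphi$. The first observation is that $\varphi^{-1}$ is non-decreasing and, straight from $\varphi^{-1}(v)=\inf\{u\ge0:\varphi(u)>v\}$, one has $\varphi(u)\le v$ whenever $u<\varphi^{-1}(v)$, and $\varphi(u)>v$ whenever $u>\varphi^{-1}(v)$ (for the latter, pick an element of the defining set strictly below $u$ and apply monotonicity). The second observation is that $\varphi$ is continuous on $[0,b_{\varphi})$, right continuous at $0$ and left continuous at $b_{\varphi}$, and, being non-decreasing with $\varphi(0)=0$ and $\lim_{u\to\infty}\varphi(u)=\infty$, its restriction to the interval on which it is continuous attains every value strictly below $\varphi(b_{\varphi})$ --- all values of $[0,\infty)$ in the two situations of item $(iii)$, and in addition the value $\varphi(b_{\varphi})$ itself (attained at $b_{\varphi}$) in the situation of item $(iv)$.

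For $(i)$, fix $u\in[0,b_{\varphi})$ and set $v=\varphi(u)<\infty$; monotonicity forces $\varphi^{-1}(v)\ge u$. If $\varphi$ is constant on some $[u,u+\delta)$ then $\varphi(w)\le v$ for every $w<u+\delta$, so $\varphi^{-1}(v)\ge u+\delta>u$; if $\varphi$ is constant on no such interval, then every right neighbourhood of $u$ meets $\{w:\varphi(w)>v\}$, so $\varphi^{-1}(v)\le u$, hence $\varphi^{-1}(v)=u$. For $(ii)$, when $b_{\varphi}<\infty$ we have $\varphi(u)=\infty$ for all $u>b_{\varphi}$, so it is enough to evaluate $\varphi^{-1}$ at $\varphi(b_{\varphi})$ and at $\infty$. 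If $\varphi(b_{\varphi})<\infty$, then for $\varphi(b_{\varphi})\le v<\infty$ the set $\{w:\varphi(w)>v\}$ is exactly $(b_{\varphi},\infty)$, so $\varphi^{-1}(v)=b_{\varphi}$, which (taking $v=\varphi(b_{\varphi})$ and $v\to\infty$) gives both claims. If $\varphi(b_{\varphi})=\infty$, then left continuity gives $\varphi(w)\to\infty$ as $w\uparrow b_{\varphi}$, whence $w_{0}\le\varphi^{-1}(v)\le b_{\varphi}$ for all $w_{0}<b_{\varphi}$ and $\varphi(w_{0})\le v<\infty$; letting $v\to\infty$ and then $w_{0}\uparrow b_{\varphi}$ yields $\varphi^{-1}(\infty)=b_{\varphi}$, which here is also $\varphi^{-1}(\varphi(b_{\varphi}))$. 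In either sub-case $\varphi^{-1}(\varphi(u))=b_{\varphi}<u$ for $u>b_{\varphi}$.

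Items $(iii)$ and $(iv)$ I would handle together. Fix $u$ in the range of the continuous restriction of $\varphi$ described above --- i.e.\ any $u\ge0$ in case $(iii)$, and $0\le u\le\varphi(b_{\varphi})$ in case $(iv)$ --- and put $w=\varphi^{-1}(u)$. One checks that $w$ lies in the interval on which $\varphi$ is continuous (for case $(iii)$ with $b_{\varphi}<\infty$ use $\varphi(w')\to\infty$ as $w'\uparrow b_{\varphi}$ to see $w<b_{\varphi}$), and the first observation gives $\varphi(v)\le u$ for $v<w$ and $\varphi(v)>u$ for $v>w$. Letting $v$ tend to $w$ from both sides and using continuity of $\varphi$ there --- with $\varphi(0)=0$ when $w=0$, and the remark that $w=b_{\varphi}$ in case $(iv)$ forces $u=\varphi(b_{\varphi})$ --- squeezes $\varphi(w)=u$, i.e.\ $\varphi(\varphi^{-1}(u))=u$. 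For the remaining values in $(iv)$, namely $u>\varphi(b_{\varphi})$, the set $\{w:\varphi(w)>u\}$ is again $(b_{\varphi},\infty)$, so $\varphi^{-1}(u)=b_{\varphi}$ and $\varphi(\varphi^{-1}(u))=\varphi(b_{\varphi})<u$.

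Finally, $(v)$ and $(vi)$ follow by combining $(i)$ --- strict monotonicity on $[a_{\varphi},b_{\varphi})$ (resp.\ on $[a_{\varphi},b_{\varphi}]$) rules out constancy of $\varphi$ on any $[u,u+\delta)$ with $u$ in that range --- with $(ii)$ applied at the endpoint $u=b_{\varphi}$ in $(vi)$; and $(vii)$ is immediate, since $(iii)$ and $(iv)$ together cover every admissible $\varphi$, so $\varphi(\varphi^{-1}(u))\le u$ always holds, while $(i)$ and $(ii)$ yield $u\le\varphi^{-1}(\varphi(u))$ whenever $\varphi(u)<\infty$ (equivalently $u\le b_{\varphi}$). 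The only part that requires genuine care is the bookkeeping at the two endpoints $0$ and $b_{\varphi}$ --- in particular the identification of $\varphi^{-1}(\infty)$ and keeping track of which one-sided continuity of $\varphi$ is available at which endpoint --- but this presents no real difficulty.
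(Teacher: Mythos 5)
Your proposal is correct. The paper itself offers no proof of this lemma -- it is dismissed as ``an easy exercise'' with a pointer to the convex case in \cite{klm}*{Lemma 3.1} -- so there is nothing to compare against; your direct case analysis from the definition $\varphi^{-1}(v)=\inf\{u\ge0:\varphi(u)>v\}$, combined with monotonicity, the prescribed one-sided continuity at $0$ and $b_{\varphi}$, and the standing assumption $\lim_{u\to\infty}\varphi(u)=\infty$ (which you use implicitly to guarantee $\varphi^{-1}(u)<\infty$ and to identify $\varphi^{-1}(\infty)$), is exactly the routine argument the authors have in mind, and all endpoint cases ($w=0$, $w=b_{\varphi}$, $u>\varphi(b_{\varphi})$) are handled correctly. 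One cosmetic quibble: in $(vii)$ the parenthetical ``equivalently $u\le b_{\varphi}$'' is not a genuine equivalence when $b_{\varphi}<\infty$ and $\varphi(b_{\varphi})=\infty$ (then $u=b_{\varphi}$ satisfies $u\le b_{\varphi}$ but $\varphi(u)=\infty$); this does not affect your argument, since you only invoke the implication $\varphi(u)<\infty\Rightarrow u\le b_{\varphi}$ and then apply $(i)$ for $u<b_{\varphi}$ and $(ii)$ for $u=b_{\varphi}$.
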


Recall that for any Orlicz function $\varphi$ the {\it lower Matuszewska--Orlicz index} $\alpha_{\varphi}$ for all arguments is defined by the formula
\begin{eqnarray*}
\alpha^{a}_{\varphi}&=&\sup\{p\in\mathbb R:\text{there exists }K\geq1\text{ such that }\varphi(au)\leq Ka^{p}\varphi(u)\\
& &\text{for any }u\in\mathbb R\text{ and }0<a\leq 1\}.
\end{eqnarray*}
Analogously the  {\it lower Matuszewska--Orlicz indexes} for large and for small arguments are defined as
\begin{eqnarray*}
\alpha^{\infty}_{\varphi}&=&\sup\{p\in\mathbb R:\text{there exist }K\geq1\text{ and }u_{0}>0\text{ such that }\varphi(u_{0})<\infty\text{ and }\\
& &\varphi(au)\leq Ka^{p}\varphi(u)\text{ for any }u\geq u_{0}\text{ and }0<a\leq 1\}
\end{eqnarray*}
and
\begin{eqnarray*}
\alpha^{0}_{\varphi}&=&\sup\{p\in\mathbb R:\text{there exist }K\geq1\text{ and }u_{0}>0\text{ such that }\varphi(au)\leq Ka^{p}\varphi(u)\\%\varphi(u_{0})>0\text{ and }
& &\text{for any }u\leq u_{0}\text{ and }0<a\leq 1\},
\end{eqnarray*}
respectively. 

\begin{remark}\label{r40}{\rm $\left( i\right)$ If $0<a_{\varphi }<b_{\varphi }$, then $\alpha _{\varphi}^{0}=\infty $ and we may extend the key inequality in the definition of $\alpha _{\varphi }^{0}$ to any $u_{0}>a_{\varphi}$ such that $\varphi(u_{0})<\infty$. Indeed, let $p>0$.
For each $0\leq u\leq a_{\varphi }$ we have $\varphi\left(au\right)=0=Ka^{p}\varphi\left(u\right)$ for each $K>0$ and $0<a\leq 1$. 
Take any $u_{0}>a_{\varphi}$ such that $\varphi(u_{0})<\infty$. If $a_{\varphi }<u\leq u_{0}$ and $0<a<\frac{a_{\varphi }}{u_{0}}$ we have 
\begin{equation*}
\varphi \left( au\right) \leq \varphi \left( au_{0}\right) \leq \varphi\left( a_{\varphi }\right) =0\leq Ka^{p}\varphi \left( u\right) 
\end{equation*}%
for every $K>0.$ Moreover, 
\begin{equation*}
\sup_{a_{\varphi }<u\leq u_{0}}\sup_{\frac{a_{\varphi }}{u_{0}}\leq a\leq 1}\frac{\varphi \left( au\right) }{a^{p}\varphi \left( u\right) }\leq\sup_{
\frac{a_{\varphi }}{u_{0}}\leq a\leq1}\frac{1}{a^{p}}=\left( \frac{u_{0}}{a_{\varphi }}\right) ^{p}.
\end{equation*}
Thus for $K=\left( \frac{u_{0}}{a_{\varphi }}\right) ^{p}$ we have 
\begin{equation*}
\varphi \left( au\right) \leq Ka^{p}\varphi \left( u\right) 
\end{equation*}%
for all $0<a\leq 1$ and $0\leq u\leq u_{0}.$

$\left( ii\right)$ If $a_{\varphi }=0$ and $\alpha _{\varphi }^{0}>0$ then we may extend the key inequality in the definition of $\alpha _{\varphi }^{0}$ to any $u_{1}$ such that $\varphi \left( u_{1}\right) <\infty .$ Indeed, suppose there is $p>0,$ $u_{0}>0$ and $K>0$ such that 
\begin{equation}
\varphi \left( au\right) \leq Ka^{p}\varphi \left( u\right)   \label{do u0}
\end{equation}
for all $0<a\leq 1$ and $0\leq u\leq u_{0}.$ Take $u_{1}>u_{0}$ satisfying $\varphi \left( u_{1}\right) <\infty .$ Then 
\begin{equation*}
\sup_{u_{0}<u\leq u_{1}}\sup_{\frac{u_{0}}{u_{1}}\leq a\leq 1}\frac{\varphi\left( au\right) }{a^{p}\varphi \left( u\right) }\leq \sup_{\frac{u_{0}}{u_{1}}\leq a\leq 1}\frac{1}{a^{p}}=\left( \frac{u_{1}}{u_{0}}\right) ^{p}.
\end{equation*}
Set $K_{1}=\left( \frac{u_{1}}{u_{0}}\right) ^{p}.$ Now we claim that 
\begin{equation*}
K_{2}:=\sup_{u_{0}<u\leq u_{1}}\sup_{0<a<\frac{u_{0}}{u_{1}}}\frac{\varphi\left(au\right) }{a^{p}\varphi\left(u\right)}<\infty.
\end{equation*}
Otherwise, for each $n\in \mathbb{N}$ we can find $u_{0}<u_{n}\leq u_{1}$ and $0<a_{n}<\frac{u_{0}}{u_{1}}$ such that
\begin{equation*}
\varphi \left( a_{n}u_{n}\right) >na_{n}^{p}\varphi \left( u_{n}\right).
\end{equation*}
Denote $b_{n}:=\frac{a_{n}u_{n}}{u_{0}}.$ Then $b_{n}<1$ and 
\begin{equation*}
\varphi \left( b_{n}u_{0}\right) =\varphi \left( a_{n}u_{n}\right)>na_{n}^{p}\varphi \left( u_{n}\right) =nb_{n}^{p}\left( \frac{u_{0}}{u_{n}}\right) ^{p}\varphi \left( u_{n}\right)\geq nb_{n}^{p}\left(\frac{u_{0}}{u_{1}}\right)^{p}\varphi\left(u_{0}\right).
\end{equation*}
On the other hand, by inequality (\ref{do u0}),
\begin{equation*}
\varphi \left( b_{n}u_{0}\right) \leq Kb_{n}^{p}\varphi \left( u_{0}\right) 
\end{equation*}
which gives a contradiction and proves the claim. Finally, setting $K_{3}=\max \left\{ K,K_{1},K_{2}\right\} $ we conclude that 
\begin{equation*}
\varphi \left( au\right) \leq K_{3}a^{p}\varphi \left( u\right) 
\end{equation*}
for all $0<a\leq 1$ and $0\leq u\leq u_{1}$.

$\left(iii\right)$ If $\alpha_{\varphi}^{\infty }>0$ then we may extend the key inequality in the definition of $\alpha _{\varphi }^{\infty }$ to
any $u_{1}>a_{\varphi }.$ Indeed, suppose there is $p>0,$ $u_{0}>0$ and $K>0$ such that $\varphi(u_{0})<\infty$ and 
\begin{equation*}
\varphi \left( au\right) \leq Ka^{p}\varphi \left( u\right) 
\end{equation*}
for all $0<a\leq 1$ and $u\geq u_{0}$. Take $u_{1}<u_{0}$ satisfying $\varphi\left(u_{1}\right)>0.$ Then
\begin{equation*}
\sup_{u_{1}\leq u<u_{0}}\sup_{0<a\leq 1}\frac{\varphi \left( au\right) }{a^{p}\varphi \left( u\right) }\leq\sup_{0<a\leq1}\frac{\varphi\left(au_{0}\right) }{a^{p}\varphi\left(u_{1}\right) }\leq \sup_{0<a\leq1}\frac{Ka^{p}\varphi \left(u_{0}\right) }{a^{p}\varphi \left( u_{1}\right) }=\frac{%
K\varphi\left(u_{0}\right)}{\varphi\left(u_{1}\right)}.
\end{equation*}
Taking $K_{1}=\max\left\{K,\frac{K\varphi\left(u_{0}\right)}{\varphi\left( u_{1}\right) }\right\} =\frac{K\varphi \left( u_{0}\right) }{\varphi
\left( u_{1}\right) }$ we obtain that 
\begin{equation*}
\varphi \left( au\right) \leq K_{1}a^{p}\varphi \left( u\right) 
\end{equation*}
for all $0<a\leq 1$ and $u\geq u_{1}$.

$\left(iv\right) $ From the above consideration we conclude immediately that if $\alpha _{\varphi }^{\infty }>0$ and $\alpha _{\varphi }^{0}>0$ then $\alpha _{\varphi }^{a}>0$.}
\end{remark}
\vspace{2mm}

\begin{example}\label{ne}{\rm Taking $\varphi_{1}\left(u\right)=\ln\left(1+u\right)$, for $u\geq0$, we easily get
\begin{equation*}
\lim_{u\rightarrow\infty}\frac{\varphi_{1}\left(au\right)}{\varphi_{1}\left(u\right)}=1
\end{equation*}
for any $a\in(0,1)$, whence $\alpha^{\infty}_{\varphi_{1}}=0$. Analogously, defining $\varphi_{2}(0)=0$ and 
\[\varphi_{2}(u)=\frac{1}{\ln\left(1+\frac{1}{u}\right)}\text{\hspace{5mm}for\hspace{5mm}}u>0,\]
we obtain
\begin{equation*}
\lim_{u\rightarrow0^{+}}\frac{\varphi_{2}\left(au\right)}{\varphi_{2}\left(u\right)}=1
\end{equation*}
for any $a\in(0,1)$ and, in consequence, $\alpha^{0}_{\varphi_{2}}=0$. See also Example \ref{de}. 
}\end{example}

For any pair $E$ and $\varphi$ we define $\alpha^{E}_{\varphi}$, by the formula
\begin{equation*}
\alpha^{E}_{\varphi}:=\left\{ 
\begin{array}{ll}
\alpha^{a}_{\varphi}, & \text{when neither }L_{\infty}\subset E\text{ nor }E\subset L_{\infty },\vspace{1mm}\\
\alpha^{\infty}_{\varphi}, & \text{when }L_{\infty}\subset E,\vspace{1mm}\\
\mbox{$\alpha^{0}_{\varphi}$,} & \mbox{when $E\subset L_{\infty}$}.
\end{array}
\right.
\end{equation*}

Given a quasi-Banach ideal space $E$ and an Orlicz function $\varphi$, we define on $L^{0}$ functional $\rho_{\varphi }^{E}$, by 
\begin{equation*}
\rho _{\varphi}^{E}(x):=\left\{ 
\begin{array}{ll}
\Vert \varphi \left( |x|\right) \Vert _{E} & \text{if }\varphi \left(|x|\right) \in E,\text{ }\vspace{1mm}\\ 
\infty & \text{otherwise.}
\end{array}
\right.
\end{equation*}

\begin{theorem}\label{l41} Let $E$ be a quasi-Banach ideal space and $\varphi$ be an Orlicz function. If $\alpha^{E}_{\varphi}>0$, then $\rho_{\varphi }^{E}$ is quasi-modular $($see Definition \ref{quasi-mod}$)$.\end{theorem}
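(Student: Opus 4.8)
The plan is to verify the five axioms of Definition~\ref{quasi-mod} for $\rho=\rho_\varphi^E$. Axioms $(i)$--$(iv)$ are soft and rest only on the ideal structure and quasi-triangle inequality of $E$ together with the elementary properties of an Orlicz function. For $(i)$: $\varphi(0)=0$ gives $\rho(0)=0$; conversely, if $x\neq0$ then $\mu(\{|x|\ge\delta\})>0$ for some $\delta>0$, and since $\varphi(u)\to\infty$, for each $N$ one has $\varphi(\lambda|x|)\ge N\chi_{\{|x|\ge\delta\}}$ once $\lambda$ is large, so, were $\rho(\lambda x)\le1$ to hold for all $\lambda>0$, the ideal property would force $\chi_{\{|x|\ge\delta\}}\in E$ and $1\ge\rho(\lambda x)\ge N\|\chi_{\{|x|\ge\delta\}}\|_E$ for every $N$, which is absurd since $\|\chi_{\{|x|\ge\delta\}}\|_E>0$; hence $x=0$. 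Axiom $(ii)$ is immediate from $|-x|=|x|$, and $(iii)$ follows from monotonicity of $\varphi$ and the ideal property. For $(iv)$ one uses that on $\{|x|\ge|y|\}$ one has $\alpha|x|+\beta|y|\le|x|$ (as $\alpha+\beta=1$), and symmetrically, whence pointwise $\varphi(|\alpha x+\beta y|)\le\varphi(\alpha|x|+\beta|y|)\le\max\{\varphi(|x|),\varphi(|y|)\}\le\varphi(|x|)+\varphi(|y|)$; applying $\|\cdot\|_E$ and its quasi-triangle inequality gives $(iv)$ with $M=C_E$.

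The substance is axiom $(v)$, where the hypothesis $\alpha_\varphi^E>0$ is used. I would fix once and for all an exponent $p$ with $0<p<\alpha_\varphi^E$ (any $p>0$ if $\alpha_\varphi^E=\infty$) and then argue by the three classes of $E$. If neither $L_\infty\subset E$ nor $E\subset L_\infty$, then $\alpha_\varphi^E=\alpha_\varphi^a$, so there is $K_0\ge1$ with $\varphi(au)\le K_0a^p\varphi(u)$ for \emph{all} $u\ge0$ and $0<a\le1$; hence $\varphi(a|x|)\le K_0a^p\varphi(|x|)$ pointwise and $\rho(ax)\le K_0a^p\rho(x)$, so $(v)$ holds even with the $\varepsilon$ deleted. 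If $E\subset L_\infty$, then $\alpha_\varphi^E=\alpha_\varphi^0$; here $\rho(x)\le A$ together with the continuous inclusion of Remark~\ref{r4-0} forces $\varphi(|x|)\le D_EA$ a.e., hence $|x|\le R(A):=\varphi^{-1}(D_EA)$ a.e.\ with $\varphi(R(A))\le D_EA<\infty$ by Lemma~\ref{skladanie}$(vii)$; by the extensions of Remark~\ref{r40}$(i)$--$(ii)$ the inequality $\varphi(au)\le K_0a^p\varphi(u)$ then holds for all $u\in[0,R(A)]$ with some $K_0=K_0(A)\ge1$, and applying it at $u=|x(t)|$ yields $\rho(ax)\le K_0a^p\rho(x)$, again with no error term.

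The remaining, genuinely quasi-normed class is $L_\infty\subset E$, where $\alpha_\varphi^E=\alpha_\varphi^\infty$ and $\varphi$ is controlled only on large arguments. If $\varphi$ is degenerate, i.e.\ $a_\varphi=b_\varphi$, then $\rho$ is two-valued ($0$ or $\infty$) and $(v)$ is trivial, so assume $a_\varphi<b_\varphi$. Set $D=\|\chi_T\|_E<\infty$, so $\|z\|_E\le D\|z\|_{L_\infty}$ for all $z\in L_\infty$. Given $\varepsilon>0$, choose $u_1\in(a_\varphi,b_\varphi)$ with $C_ED\,\varphi(u_1)\le\varepsilon$, which is possible because $\varphi(a_\varphi)=0$ and $\varphi$ is right-continuous at $a_\varphi$; by Remark~\ref{r40}$(iii)$ extend the $\alpha_\varphi^\infty$-inequality to all $u\ge u_1$, obtaining $K_1=K_1(\varepsilon)\ge1$ with $\varphi(au)\le K_1a^p\varphi(u)$ for $u\ge u_1$, $0<a\le1$. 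Then for any $x$ with $\rho(x)\le A$ and any $0<a\le1$, split $\varphi(a|x|)=\varphi(a|x|)\chi_{\{|x|\ge u_1\}}+\varphi(a|x|)\chi_{\{|x|<u_1\}}$: the $E$-norm of the first term is at most $K_1a^p\rho(x)$, while the second term is dominated by $\varphi(u_1)\chi_T$, so its $E$-norm is at most $D\varphi(u_1)\le\varepsilon/C_E$. The quasi-triangle inequality of $E$ then gives $\rho(ax)\le C_EK_1a^p\rho(x)+\varepsilon$, which is $(v)$ with $K=C_EK_1$.

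I expect the main obstacle to be exactly this class $L_\infty\subset E$: since the Matuszewska--Orlicz estimate for $\varphi$ is unavailable on bounded arguments, the contribution of $|x|$ below the threshold $u_1$ must be absorbed into $\varepsilon$, which works only because $u_1$ can be driven down to $a_\varphi$, where $\varphi$ vanishes continuously---this is precisely why the degenerate Orlicz functions with $a_\varphi=b_\varphi$ have to be singled out. Everything else---in particular the routine checks that all functions occurring really belong to $E$ (via the ideal property), and the fact that one and the same $p$ serves all $\varepsilon$ and $A$---should be straightforward.
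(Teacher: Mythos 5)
Your proposal is correct and follows essentially the same route as the paper's proof: conditions $(i)$--$(iv)$ are verified directly with $M=C_E$, and for $(v)$ you use the same three-case analysis, with the extension arguments of Remark \ref{r40}, the continuous embedding of Remark \ref{r4-0} to bound $|x|$ a.e.\ when $E\subset L_{\infty}$, and the splitting over $\{|x|\geq u_{1}\}$ with the small-argument part absorbed into $\varepsilon$ when $L_{\infty}\subset E$. The only cosmetic difference is that the paper dispenses with the degenerate situations via the reduction to $0<\rho_{\varphi}^{E}(x)<\infty$ (forcing $a_{\varphi}<b_{\varphi}$), while you treat them ad hoc in one branch; this does not affect correctness.
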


\begin{proof}Obviously $\rho_{\varphi}^{E}(0)=0$ and $\rho_{\varphi}^{E}(-x)=\rho_{\varphi}^{E}(x)$ for any $x\in L^{0}$. Let $x\neq0$. Then there exist $A\in\Sigma$ with $\mu(A)>0$ and $n\in\mathbb N$ such that $\frac{1}{n}\chi_{A}\leq|x|$. If $\varphi(|x|)\notin E$, then $\rho_{\varphi }^{E}(x)=\infty>1$. While, if $\varphi(|x|)\in E$, by $\varphi(\frac{1}{n})\chi_{A}\leq\varphi(|x|)$, we get $\chi_{A}\in E$.  Since $\lim_{u\rightarrow\infty}\varphi(u)=\infty$, we can find $\lambda_{A}>0$ such that $\varphi(\lambda_{A}/n)>1/\|\chi_{A}\|_{E}$ and, in consequence, $\rho_{\varphi }^{E}(\lambda_{A}x)=\|\varphi(\lambda_{A}|x|)\|_{E}\geq\|\varphi(\lambda_{A}/n)\chi_{A}\|_{E}>1$.

For any $x\in L^{0}$ and any $0\leq\lambda_{1}\leq\lambda_{2}$ we have $\varphi(\lambda_{1}|x(t)|)\leq\varphi(\lambda_{2}|x(t)|)$ for $\mu$-a.e.\ $t\in T$, whence $\rho_{\varphi}^{E}(\lambda_{1}x)\leq\rho_{\varphi}^{E}(\lambda_{2}x)$.

Let now $x,y\in L^{0}$ and $\alpha,\beta\geq0$, $\alpha+\beta=1$. Then
\[\varphi(|\alpha x(t)+\beta y(t)|)\leq\varphi(\max(|x(t)|,|y(t)|))\leq\varphi(|x(t)|)+\varphi(|y(t)|)\]
for $\mu$-a.e.\ $t\in T$. Hence, if $\varphi(|x|)\in E$ and $\varphi(|y|)\in E$, we get
\begin{eqnarray*}
\rho_{\varphi}^{E}\left(\alpha x+\beta y\right)&=&\Vert\varphi\left(|\alpha x+\beta y|\right)\Vert _{E}\leq\Vert\varphi\left(|x|\right) +\varphi\left(|y|\right)\Vert_{E}\\
&\leq & C_{E}\left(\Vert\varphi\left(|x|\right)\Vert_{E}+\Vert\varphi\left(|y|\right)\Vert_{E}\right)=C_{E}\left(\rho_{\varphi}^{E}\left(x\right)+\rho _{\varphi}^{E}\left(y\right)\right).
\end{eqnarray*}
Obviously, the inequality $\rho_{\varphi}^{E}\left(\alpha x+\beta y\right)\leq C_{E}\left(\rho_{\varphi}^{E}\left(x\right)+\rho _{\varphi}^{E}\left(y\right)\right)$ holds true, when $\varphi(|x|)\notin E$ or $\varphi(|y|)\notin E$. 

At the end, we will prove that $\rho_{\varphi}^{E}$ satisfies condition $(v)$. Without loss of generality, we can suppose that $0<\rho_{\varphi}^{E}(x)<\infty$ (then in particular we have $a_{\varphi}<b_{\varphi}$). We will consider three case.

If $L_{\infty}\subset E$, then for any $\varepsilon>0$ there exists $u_{1}\in(a_{\varphi},b_{\varphi})$ such that $C_{E}\cdot\varphi(u_{1})\|\chi_{T}\|_{E}<\varepsilon$. Since $\alpha^{E}_{\varphi}=\alpha^{\infty}_{\varphi}>0$, by Remark \ref{r40}, we obtain that there exist $p>0$ and $K=K(\varepsilon)\geq1$ such that $\varphi(au)\leq Ka^{p}\varphi(u)$ for any $u\geq u_{1}$ and $0<a\leq1$. Defining $B=\{t\in T\colon|x(t)|\geq u_{1}\}$, for any $a\in(0,1]$ we get
\begin{eqnarray}\label{i41}
\rho_{\varphi}^{E}(ax)&=&\|\varphi(a|x|)\|_{E}\leq C_{E}\left(\|\varphi(a|x|)\chi_{B}\|_{E}+\|\varphi(a|x|)\chi_{T\backslash B}\|_{E}\right)\\
&\leq & C_{E}Ka^{p}\|\varphi(|x|)\|_{E}+C_{E}\varphi(u_{1})\|\chi_{T}\|_{E}\leq  C_{E}Ka^{p}\rho_{\varphi}^{E}(x)+\varepsilon.\nonumber
\end{eqnarray} 
In the case when neither $L_{\infty}\subset E$ nor $E\subset L_{\infty}$, analogously as above we get 
\begin{equation}\rho_{\varphi}^{E}(ax)=\|\varphi(a|x|)\|_{E}\leq Ka^{p}\|\varphi(|x|)\|_{E}=Ka^{p}\rho_{\varphi}^{E}(x).
\end{equation}
Let now $E\subset L_{\infty}$, take $A>0$ and assume that $\rho_{\varphi}^{E}(x)=\|\varphi(|x|)\|_{E}\leq A$. By the closed graph theorem, there e\-xists constant $D_{E}>0$ such that $\|\varphi(|x|)\|_{L^{\infty}}\leq D_{E}\|\varphi(|x|)\|_{E}\leq AD_{E}$. Hence $\varphi(|x(t)|)\leq\min(AD_{E},\varphi(b_{\varphi}))$ for $\mu$-a.e.\ $t\in T$, whence $|x(t)|\leq u_{2}$ for the same $t$, where $u_{2}=\varphi^{-1}(\min(AD_{E},\varphi(b_{\varphi})))$. Simultaneously, by $\alpha^{E}_{\varphi}=\alpha^{0}_{\varphi}>0$ and Remark \ref{r40}, we obtain that there exist $p>0$ and $K=K(A)\geq1$ such that $\varphi(au)\leq Ka^{p}\varphi(u)$ for any $u\leq u_{2}$ and $0<a\leq1$. Therefore, $\rho_{\varphi}^{E}(ax)\leq Ka^{p}\rho_{\varphi}^{E}(x)$.
\end{proof}

Now we will show that if $\rho_{\varphi}^{E}$ is a quasi-modular (more precisely, if $\rho_{\varphi}^{E}$ satisfies the condition $(v)$ of the definition of the quasi-modular - see Definition \ref{quasi-mod} and also Remark \ref{rq-m}$(i)$), then $\alpha^{E}_{\varphi}>0$.

\begin{theorem}\label{r42}$(i)$ Assume that neither $L_{\infty}\subset E$ nor $E\subset L_{\infty}$. Then $\alpha^{a}_{\varphi}>0$  if and only if there exist constants $p>0$ and  $K\geq1$ such that for any $x\in L^{0}$ and any $0<a\leq1$ we have $\rho_{\varphi}^{E}\left(ax\right)\leq Ka^{p}\rho_{\varphi}^{E}\left(x\right)$.

$(ii)$ Let $L_{\infty}\subset E$. Then $\alpha^{\infty}_{\varphi}>0$ if and only if there is a constant $p>0$ such that for all $v_{0}>0$ there exists $K=K(v_{0})\geq1$ such that for any $x\in L^{0}$ satisfying $|x(t)|\geq v_{0}$ for $\mu$-a.e.\ $t\in T$ and any $0<a\leq1$ we have $\rho_{\varphi}^{E}\left(ax\right)\leq Ka^{p}\rho_{\varphi}^{E}\left(x\right)$.

$(iii)$ Let $E\subset L_{\infty}$. Then $\alpha^{0}_{\varphi}>0$ if and only if there is a constant $p>0$ such that for all $A>0$ there exists  $K=K(A)\geq1$ such that for any $x\in L^{0}$ satisfying $\rho_{\varphi}^{E}(x)\leq A$ and any $0<a\leq1$ we have $\rho_{\varphi}^{E}\left(ax\right)\leq Ka^{p}\rho_{\varphi}^{E}\left(x\right)$.
\end{theorem}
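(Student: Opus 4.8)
The plan is to prove the three biconditionals one at a time. In each of them the implication ``index positive $\Rightarrow$ quasi-modular inequality'' is, up to inessential changes, already contained in the proof of Theorem \ref{l41}, so I would treat those directions briefly and put the weight on the converse implications, which are the genuinely new content. The converse implications I would obtain by substituting into the given inequalities step functions $x=u\chi_{A}$, using that $\rho_{\varphi}^{E}(u\chi_{A})=\varphi(u)\,\|\chi_{A}\|_{E}$ whenever $\varphi(u)<\infty$ and $\chi_{A}\in E$, so that the factor $\|\chi_{A}\|_{E}$ cancels and the modular inequality collapses to the Matuszewska--Orlicz inequality for $\varphi$. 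A preliminary remark lets me assume $a_{\varphi}<b_{\varphi}$: if $a_{\varphi}=b_{\varphi}$ then $\varphi$ takes only the values $0$ and $\infty$ (with $\varphi=0$ on $[0,a_{\varphi}]$, by left continuity), hence $\rho_{\varphi}^{E}$ takes only the values $0$ and $\infty$; moreover $\rho_{\varphi}^{E}(x)<\infty$ forces $|x|\le a_{\varphi}$ $\mu$-a.e., so $a|x|\le a_{\varphi}$ $\mu$-a.e.\ and $\rho_{\varphi}^{E}(ax)=0$ for $0<a\le1$, which makes all three modular inequalities hold trivially, while $\alpha_{\varphi}^{a}=\alpha_{\varphi}^{\infty}=\alpha_{\varphi}^{0}=\infty$ makes the converses hold trivially too. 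Since a quasi-normed ideal space contains a strictly positive element, I also fix a set $A_{0}$ with $\mu(A_{0})>0$ and $\chi_{A_{0}}\in E$ (so $0<\|\chi_{A_{0}}\|_{E}<\infty$) and a point $u_{0}\in(a_{\varphi},b_{\varphi})$ (so $0<\varphi(u_{0})<\infty$).

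For the forward implications: in case $(i)$ a fixed $p\in(0,\alpha_{\varphi}^{a})$ supplies $K\ge1$ with $\varphi(au)\le Ka^{p}\varphi(u)$ for all $u$, and applying $\|\cdot\|_{E}$ to this pointwise inequality gives $\rho_{\varphi}^{E}(ax)\le Ka^{p}\rho_{\varphi}^{E}(x)$; in case $(iii)$, if $\rho_{\varphi}^{E}(x)\le A$ then \eqref{wloz} confines $|x|$ below $u_{2}:=\varphi^{-1}(\min(AD_{E},\varphi(b_{\varphi})))$ $\mu$-a.e., so Remark \ref{r40} furnishes $p>0$ and $K=K(A)$ with $\varphi(au)\le Ka^{p}\varphi(u)$ for $u\le u_{2}$ and one again applies $\|\cdot\|_{E}$ $\mu$-a.e.\ --- these two are verbatim the corresponding cases in the proof of Theorem \ref{l41}. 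In case $(ii)$, given $v_{0}>0$ I would first upgrade Remark \ref{r40}$(iii)$ to a bound valid for \emph{all} $u\ge v_{0}$: on the (possibly nonempty) interval $[v_{0},a_{\varphi}]$ both sides vanish; on $[\max(v_{0},u_{1}),\infty)$, with a fixed $u_{1}\in(a_{\varphi},b_{\varphi})$, one has the inequality of Remark \ref{r40}$(iii)$ with some $p>0$ and $K'$; and on the remaining bounded portion the estimate $\varphi(au)\le\varphi(u)$ (valid since $au\le u$), combined with $\varphi(au)=0$ whenever $au\le a_{\varphi}$, keeps the quotient $\varphi(au)/(a^{p}\varphi(u))$ bounded --- so one gets $p>0$ and $K=K(v_{0})$ with $\varphi(au)\le Ka^{p}\varphi(u)$ for all $u\ge v_{0}$, $0<a\le1$. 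If now $|x|\ge v_{0}$ $\mu$-a.e., the pointwise inequality holds $\mu$-a.e.\ on all of $T$, so applying $\|\cdot\|_{E}$ gives $\rho_{\varphi}^{E}(ax)\le Ka^{p}\rho_{\varphi}^{E}(x)$ with no correction term --- in contrast to Theorem \ref{l41}, where a set $T\setminus\{|x|\ge u_{1}\}$ had to be split off and absorbed into $\varepsilon$; this is precisely what the extra hypothesis $|x|\ge v_{0}$ buys.

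For the converse implications I would test the hypotheses on the step functions fixed above. In case $(i)$, for any $u$ with $0<\varphi(u)<\infty$ put $x=u\chi_{A_{0}}$: the hypothesis $\rho_{\varphi}^{E}(ax)\le Ka^{p}\rho_{\varphi}^{E}(x)$ becomes $\varphi(au)\|\chi_{A_{0}}\|_{E}\le Ka^{p}\varphi(u)\|\chi_{A_{0}}\|_{E}$, i.e.\ $\varphi(au)\le Ka^{p}\varphi(u)$, which also holds trivially for $\varphi(u)\in\{0,\infty\}$; hence $\alpha_{\varphi}^{a}\ge p>0$. In case $(ii)$, choose the threshold $v_{0}=u_{0}$ and take $x=u\chi_{T}$ for $u\ge u_{0}$ --- admissible because $\chi_{T}\in L_{\infty}\subset E$ and $|x|=u\ge u_{0}$ $\mu$-a.e.; the same cancellation gives $\varphi(au)\le K(u_{0})a^{p}\varphi(u)$ for every $u\ge u_{0}$ with $\varphi(u)<\infty$ and trivially for $\varphi(u)=\infty$, and since $\varphi(u_{0})<\infty$ this means $\alpha_{\varphi}^{\infty}\ge p>0$. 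In case $(iii)$, set $A:=\varphi(u_{0})\|\chi_{A_{0}}\|_{E}>0$ and take $x=u\chi_{A_{0}}$ for $a_{\varphi}<u\le u_{0}$: then $\rho_{\varphi}^{E}(x)=\varphi(u)\|\chi_{A_{0}}\|_{E}\le A$, so the hypothesis applies with $K=K(A)$ and yields $\varphi(au)\le Ka^{p}\varphi(u)$ for all such $u$, and trivially for $u\le a_{\varphi}$; hence $\alpha_{\varphi}^{0}\ge p>0$. The key insight is just this translation via step functions; the steps I expect to require the most care are the upgrade of Remark \ref{r40}$(iii)$ in the forward part of $(ii)$, and --- throughout the converses --- the matching of the test function to the side condition: ``$|x|\ge v_{0}$ $\mu$-a.e.'' forces $x=u\chi_{T}$, which is exactly why $L_{\infty}\subset E$ is the right hypothesis in $(ii)$, while ``$\rho_{\varphi}^{E}(x)\le A$'' forces $u\le u_{0}$ with $\varphi(u_{0})<\infty$, which is exactly the range of arguments appearing in the definition of $\alpha_{\varphi}^{0}$.
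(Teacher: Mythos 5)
Your proposal follows essentially the same route as the paper's proof: the forward implications in $(i)$ and $(iii)$ are, as you say, read off from the proof of Theorem \ref{l41}, and all three converses are obtained exactly as in the paper, by testing the modular inequality on step functions $u\chi_{A_{0}}$ (cases $(i)$, $(iii)$, with $A:=\varphi(u_{0})\Vert\chi_{A_{0}}\Vert_{E}$) and $u\chi_{T}$ with $v_{0}=u_{0}$ (case $(ii)$), and cancelling the factor $\Vert\chi_{A_{0}}\Vert_{E}$, resp.\ $\Vert\chi_{T}\Vert_{E}$. Your preliminary reduction to $a_{\varphi}<b_{\varphi}$ is a harmless extra precaution that the paper makes implicitly.

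The one step that does not hold up as written is the ``remaining bounded portion'' in the forward direction of $(ii)$. There you bound the quotient $\varphi(au)/(a^{p}\varphi(u))$ for $u$ in a bounded interval using only $\varphi(au)\leq\varphi(u)$ together with $\varphi(au)=0$ whenever $au\leq a_{\varphi}$. That dichotomy controls $a$ from below (via $a>a_{\varphi}/u_{1}$) only when $a_{\varphi}>0$; if $a_{\varphi}=0$ it yields merely $\varphi(au)/(a^{p}\varphi(u))\leq a^{-p}$, which is unbounded as $a\rightarrow0$. The conclusion is still true, but it needs the argument of Remark \ref{r40}$(iii)$ rather than monotonicity alone: for $v_{0}\leq u\leq u_{0}$ estimate $\varphi(au)\leq\varphi(au_{0})\leq Ka^{p}\varphi(u_{0})$ and divide by $\varphi(u)\geq\varphi(v_{0})>0$, which is legitimate precisely because $v_{0}>a_{\varphi}$ in this case. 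Equivalently: when $v_{0}>a_{\varphi}$ invoke Remark \ref{r40}$(iii)$ directly with $u_{1}:=v_{0}$ (no remaining portion at all), and reserve your dichotomy for the case $0<v_{0}\leq a_{\varphi}$, which forces $a_{\varphi}>0$ and where it does work. With this small repair your argument coincides with the paper's.
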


\begin{proof}
The necessity of statements $(i)$ and $(iii)$ follows from the proof of Theorem \ref{l41}. Now we will show the sufficiency of $(iii)$. Let $D\in\Sigma$ be such that $\mu(D)>0$ and $\chi_{D}\in E$. Take $u_{0}>0$ satisfying $0<\varphi(u_{0})<\infty$ and define $A:=\varphi(u_{0})\|\chi_{D}\|_{E}$. Then for any $u\leq u_{0}$ and any $a\in(0,1]$ we get
\begin{eqnarray*}\varphi(au)\|\chi_{D}\|_{E}&=&\|\varphi(au)\chi_{D}\|_{E}=\rho_{\varphi}^{E}\left(au\chi_{D}\right)\\
&\leq& K(A)a^{p}\rho_{\varphi}^{E}\left(u\chi_{D}\right)=K(A)a^{p}\|\varphi(u)\chi_{D}\|_{E}=K(A)a^{p}\varphi(u)\|\chi_{D}\|_{E}
\end{eqnarray*}
So, by arbitrariness of $u$ and $a$, we obtain $\alpha^{0}_{\varphi}>0$. Analogously, we can prove the sufficiency of the condition $\alpha^{a}_{\varphi}>0$ in $(i)$.

Now we will prove statement $(ii)$, that is, let $L_{\infty}\subset E$. First let us notice, that by the proof of Theorem \ref{l41}, we get implication: if $\alpha^{\infty}_{\varphi}>0$, then there is a constant $p>0$ such that for all $\varepsilon>0$ there exists  $K=K(\varepsilon)\geq1$ such that $\rho_{\varphi}^{E}\left(ax\right)\leq Ka^{p}\rho_{\varphi}^{E}\left( x\right)+\varepsilon$ for any $x\in L^{0}$ and any $0<a\leq1$. 

Let $\alpha^{\infty}_{\varphi}>0$ and take $p\in(0,\alpha^{\infty}_{\varphi})$, $v_{0}>0$ and $x\in L^{0}$ such that $\rho_{\varphi}^{E}(x)<\infty$ and $|x(t)|\geq v_{0}$ for $\mu$-a.e.\ $t\in T$. By Remark \ref{r40} (especially $(iii)$ and $(iv)$), there exists $K=K(v_{0})$ such that
\[\varphi(a|x(t)|)\leq Ka^{p}\varphi(|x(t)|)\] for any $a\in(0,1]$ and $\mu$-a.e.\ $t\in T$, whence $\rho_{\varphi}^{E}\left(ax\right)\leq Ka^{p}\rho_{\varphi}^{E}\left(x\right)$.

At the end, we will prove the opposite implication. Take $u_{0}>0$ satisfying $0<\varphi(u_{0})<\infty$ and define $v_{0}=u_{0}$. Then for any $u\geq u_{0}$ and any $a\in(0,1]$ we have
\[\varphi(au)\|\chi_{T}\|_{E}=\rho_{\varphi}^{E}\left(au\chi_{T}\right)\leq K(v_{0})a^{p}\rho_{\varphi}^{E}\left(u\chi_{T}\right)=K(v_{0})a^{p}\varphi(u)\|\chi_{T}\|_{E},\]
and, in consequence, $\alpha^{\infty}_{\varphi}>0$.
\end{proof}

\begin{definition} Let a quasi-Banach ideal space $E$ and an Orlicz function $\varphi$ be such that $\alpha_{\varphi}^{E}>0$. Then Calderón--Loza\-nov\-ski{\u{\i}} space $E_{\varphi}$ is defined by 
\begin{equation*}
E_{\varphi}=\{x\in L^{0}:\lim_{\lambda \rightarrow 0}\rho_{\varphi}^{E}(\lambda x)=0\}.
\end{equation*}
\end{definition}

By Theorem \ref{l41} and Lemma \ref{l3p}, $E_{\varphi}$ is quasi-modular space and
\begin{equation*}
E_{\varphi}=\{x\in L^{0}:\rho_{\varphi }^{E}(\lambda x)<\infty\text{ for some }\lambda>0\}.
\end{equation*}
 Moreover, by Theorem \ref{t31}, the functional
\begin{equation*}
\Vert x\Vert_{\varphi }=\inf\left\{\lambda>0:\rho_{\varphi}^{E}\left(x/\lambda\right)\leq1\right\} ,
\end{equation*}
is a quasi-norm, called {\em Luxemburg-Nakano quasi-norm}. It is easy to show, that $E_{\varphi}=(E_{\varphi},\leq,\|\cdot\|_{\varphi})$ is quasi-normed ideal space. We get also the following

\begin{lemma}\label{f_x} For any quasi-Banach ideal space $E$ and any Orlicz function $\varphi$ the fo\-llo\-wing assertions hold:
\begin{itemize}
\item[$(i)$] For any $x\in E_{\varphi }$ the function $f_{x}\left(\alpha\right):=\rho_{\varphi }^{E}(\alpha x)$ for $\alpha>0$ is non-decreasing and left conti\-nu\-o\-us.
\item[$(ii)$] For any $x\in E_{\varphi }$ we have $\|x\|_{\varphi}\leq1$ if and only if $\rho_{\varphi}^{E}(x)\leq1$.
\item[$(iii)$] The quasi-normed ideal space $E_{\varphi}$ has the Fatou property and, in consequence, $E_{\varphi}$ is complete.
\end{itemize}
\end{lemma}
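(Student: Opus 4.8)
The plan is to establish $(i)$ first, deduce $(ii)$ from $(i)$ and Lemma \ref{le32}, and finally prove $(iii)$ by lifting the Fatou property from $E$ to $E_{\varphi}$ and invoking Lemma \ref{l4-2}. The technical ingredient shared by $(i)$ and $(iii)$ is the elementary fact that if $0\le u_{n}\uparrow u$ in $[0,\infty)$ then $\varphi(u_{n})\uparrow\varphi(u)$: the sequence $(\varphi(u_{n}))$ is non-decreasing since $\varphi$ is, and its limit equals $\varphi(u)$ by continuity of $\varphi$ on $(0,b_{\varphi})$ when $u<b_{\varphi}$ (the case $u=0$ being trivial), by left continuity of $\varphi$ at $b_{\varphi}$ when $u=b_{\varphi}<\infty$, and because $\varphi(u_{n})=\varphi(u)=\infty$ for all large $n$ when $u>b_{\varphi}$. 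Applying this with $u_{n}=\alpha_{n}|x(t)|$ gives $\varphi(\alpha_{n}|x|)\uparrow\varphi(\alpha|x|)$ $\mu$-a.e.\ whenever $\alpha_{n}\uparrow\alpha$.

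For $(i)$, since $\rho_{\varphi}^{E}$ is a quasi-modular by Theorem \ref{l41}, condition $(iii)$ of Definition \ref{quasi-mod} already gives that $f_{x}$ is non-decreasing; thus, for $\alpha_{n}\uparrow\alpha$, the sequence $(f_{x}(\alpha_{n}))$ is non-decreasing with $\sup_{n}f_{x}(\alpha_{n})\le f_{x}(\alpha)$. If $\sup_{n}f_{x}(\alpha_{n})=\infty$ this forces $f_{x}(\alpha)=\infty$, hence $f_{x}(\alpha_{n})\to f_{x}(\alpha)$. If $\sup_{n}f_{x}(\alpha_{n})<\infty$, then each $\varphi(\alpha_{n}|x|)\in E$ and $\sup_{n}\|\varphi(\alpha_{n}|x|)\|_{E}<\infty$, so from $\varphi(\alpha_{n}|x|)\uparrow\varphi(\alpha|x|)$ $\mu$-a.e.\ and the Fatou property of $E$ we obtain $\varphi(\alpha|x|)\in E$ and $\|\varphi(\alpha_{n}|x|)\|_{E}\to\|\varphi(\alpha|x|)\|_{E}$, i.e.\ $f_{x}(\alpha_{n})\to f_{x}(\alpha)$. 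This proves $(i)$. Now $(ii)$ follows at once: letting $\alpha\to1^{-}$ in $(i)$ shows that $\rho_{\varphi}^{E}$ is left continuous in the sense of Lemma \ref{le32}$(ii)$, and combining parts $(i)$ and $(ii)$ of that lemma gives $\|x\|_{\varphi}\le1$ if and only if $\rho_{\varphi}^{E}(x)\le1$.

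For $(iii)$, let $x\in L^{0}$ and let $(x_{n})$ in $(E_{\varphi})_{+}$ satisfy $x_{n}\uparrow|x|$ $\mu$-a.e.\ with $M:=\sup_{n}\|x_{n}\|_{\varphi}<\infty$; the case $M=0$ is trivial, so assume $M>0$ and fix $\lambda>M$. Since $\|x_{n}/\lambda\|_{\varphi}\le M/\lambda<1$, Lemma \ref{le32}$(iii)$ gives $\rho_{\varphi}^{E}(x_{n}/\lambda)\le1$, whence $\varphi(x_{n}/\lambda)\in E$ and $\|\varphi(x_{n}/\lambda)\|_{E}\le1$ for every $n$. As $\varphi(x_{n}/\lambda)\uparrow\varphi(|x|/\lambda)$ $\mu$-a.e., the Fatou property of $E$ yields $\varphi(|x|/\lambda)\in E$ with $\|\varphi(|x|/\lambda)\|_{E}\le1$, that is, $\rho_{\varphi}^{E}(x/\lambda)\le1$; hence $x\in E_{\varphi}$ and $\|x\|_{\varphi}\le\lambda$, and letting $\lambda\to M^{+}$ gives $\|x\|_{\varphi}\le M$. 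Conversely, $x_{n}\le|x|$ forces $\|x_{n}\|_{\varphi}\le\|x\|_{\varphi}$ for all $n$, so, $(\|x_{n}\|_{\varphi})$ being non-decreasing, $\|x_{n}\|_{\varphi}\to\sup_{n}\|x_{n}\|_{\varphi}=M=\|x\|_{\varphi}$. Thus $E_{\varphi}$ has the Fatou property, and being a quasi-normed ideal space it is then complete by Lemma \ref{l4-2}. The step I expect to require the most care is the pointwise monotone convergence $\varphi(\alpha_{n}|x|)\uparrow\varphi(\alpha|x|)$ near a finite value $b_{\varphi}$, where the precise continuity assumptions on $\varphi$ from Definition \ref{of} must be used; once that is settled, $(ii)$ and $(iii)$ are routine consequences of the lemmas already proved.
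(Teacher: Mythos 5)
Your strategy is the one the paper intends (its own proof is only a sketch: $(i)$ ``follows from the properties of $\varphi$ and $E$'', $(ii)$ from $(i)$ and Lemma \ref{le32}, $(iii)$ by lifting the Fatou property as in the cited references and then Lemma \ref{l4-2}), and your reduction of $(ii)$ and your direct lifting argument in $(iii)$ are correct. There is, however, one step that is not yet licensed as written: both in $(i)$ and in $(iii)$ you invoke the Fatou property of $E$ for the increasing sequences $\varphi(\alpha_{n}|x|)\uparrow\varphi(\alpha|x|)$, respectively $\varphi(x_{n}/\lambda)\uparrow\varphi(|x|/\lambda)$, but the Fatou property as formulated in the paper applies to limits of the form $|x|$ with $x\in L^{0}$, i.e.\ a.e.\ finite, and at that stage you do not know that $\varphi(\alpha|x|)$ (resp.\ $\varphi(|x|/\lambda)$) is finite $\mu$-a.e.; it may a priori equal $+\infty$ on a set of positive measure (for instance where $|x|/\lambda>b_{\varphi}$), which is exactly the situation the clause ``$\infty$ otherwise'' in the definition of $\rho_{\varphi}^{E}$ is there for. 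The gap is short to close: if an increasing sequence $(y_{n})\subset E_{+}$ has $\sup_{n}\|y_{n}\|_{E}<\infty$ while $\sup_{n}y_{n}=\infty$ on a set $A$ with $\mu(A)>0$, choose $B\subseteq A$ with $\mu(B)>0$ and $\chi_{B}\in E$ (possible by the weak-unit condition in the definition of an ideal space); for each $m$ the sets $B\cap\{y_{n}\geq m\}$ increase to $B$, so the Fatou property applied to their characteristic functions gives $\liminf_{n}\|y_{n}\|_{E}\geq m\,\|\chi_{B}\|_{E}$ for every $m$, a contradiction. Hence the pointwise limit is a.e.\ finite, lies in $L^{0}$, and your applications of the Fatou property (and with them the whole proof) go through.

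A minor further remark: you do not need Theorem \ref{l41} (and hence the hypothesis $\alpha_{\varphi}^{E}>0$) to see that $f_{x}$ is non-decreasing; this is immediate from the monotonicity of $\varphi$ and the ideal property of $E$, and avoiding the detour keeps part $(i)$ valid verbatim for an arbitrary Orlicz function, as the statement of the lemma claims. Your treatment of the pointwise convergence $\varphi(u_{n})\uparrow\varphi(u)$ near $b_{\varphi}$, using the continuity assumptions of Definition \ref{of}, is exactly the point that needed care and is handled correctly.
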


\begin{proof} The assertion $(i)$ follows immediately from the properties of $\varphi$ and $E$ (recall that $E$ has the Fatou property). Next, by $(i)$ and Lemma \ref{le32}, we obtain $(ii)$. Proceeding analogously as in \cite{fh}*{Theorem 12} (see also \cite{kmp}*{Lemma 2.2$(ii)$}), we get that $E_{\varphi}$ has the Fatou property. Hence, by Lemma \ref{l4-2}, $E_{\varphi}$ is complete.
\end{proof}

\begin{remark}{\rm $(i)$ If $E=L^{1}$ ($E=l^{1}$), then $E_{\varphi }$ is the Orlicz function (sequence) space $L^{\varphi}$ ($l^{\varphi}$) equipped with the Luxemburg-Nakano quasi-norm (see \cite{kz}). If $E$ is a Lorentz function (sequence) space $\Lambda_{1,w}$ ($\lambda_{1,w}$) (see \cite{kama} for the respective definition), then $E_{\varphi }$ is the corresponding Orlicz-Lorentz function (sequence) space $\Lambda_{\varphi,w}$ ($\lambda_{\varphi,w}$), equipped with the Luxemburg-Nakano quasi-norm. 

On the other hand, if $\varphi(u)=u^{p}$, $1\leq p<\infty$ [$0<p<1$] then $E_{\varphi }$ is the $p$-convexification [concavification] $E^{(p)}$ of $E$ with the quasi-norm $\Vert x\Vert _{E^{(p)}}=\Vert |x|^{p}\Vert _{E}^{1/p}$. If $\varphi (u)=0$ for $0\leq u\leq 1$ and $\varphi (u)=\infty $ for $u>1$, then $E_{\varphi }=L^{\infty }$ with equality of the norms.

Finally, note that our results can be easily applied to quasi-normed Orlicz spaces and quasi-normed Orlicz-Lorentz spaces. 

$(ii)$ It is known that there is a connection between the space $E_{\varphi}$ and the Calderón-Lozanov\-skii interpolation construction $\psi\left( E,L_{\infty}\right)$ for normed-Banach ideal spaces $E_{\varphi}$ ($\varphi $ is a convex Orlicz function) and $\psi\left( E,L_{\infty }\right)$ ($\psi $ is a homogeneous, concave function on $\mathbb{R}_{+}^{2}$) - see \cite{ma}*{Example 2, p.\  178}. However, there is a similar relation if $\varphi $ is a non-convex Orlicz function and $\psi $ is positively homogeneous, non-decreasing with respect to each variable. Moreover, such a function $\psi $ leads to quasi-Banach lattices $\psi\left(E_{0},E_{1}\right)$ which have application in the interpolation theory (see \cite{rt}).
}\end{remark}

\begin{remark}\label{r42-5} {\rm Let $E\subset L_{\infty}$. Applying inequality \eqref{wloz} we conclude that for every $x\in E_{\varphi }$ such that $\rho_{\varphi}^{E}(x)=\left\Vert\varphi\left(\left\vert x\right\vert\right)\right\Vert_{E}\leq1$ we have
\[\left\vert x\left(t\right)\right\vert\leq\varphi^{-1}\left(D_{E}\right)\]
for $\mu $-a.e.\ $t\in T$ (note that if $b_{\varphi}<\infty$ and $\varphi(b_{\varphi})<D_{E}$, then $\varphi^{-1}(D_{E})=\varphi^{-1}(\varphi(b_{\varphi}))=b_{\varphi}$). We claim that for each $x\in E_{\varphi}$ such that $\rho_{\varphi}^{E}(x)=\left\Vert\varphi\left(\left\vert x\right\vert\right)\right\Vert_{E}\leq 1$ we get
\[\left\vert x\left( t\right) \right\vert\leq\varphi^{-1}\left(1/a_{E}\right)\]
for $\mu $-a.e.\ $t\in T$ (where $a_{E}$ is defined by formula \eqref{stala}), which gives a more optimal inequality than above. Moreover, the number $a_{E}>0$ in several classical ideal spaces (for example in Orlicz spaces) can be easily calculated. Although the proof of this claim is obvious and standard we present the details for reader's convenience.

Let $x\in E_{\varphi}$ be such that $\rho_{\varphi}^{E}(x)=\left\Vert \varphi\left(\left\vert x\right\vert\right)\right\Vert_{E}\leq1$. Suppose $\left\vert x\left(t\right)\right\vert>\varphi^{-1}(1/a_{E})$ for $\mu$-a.e.\ $t\in A$ with $\mu\left(A\right)>0$. By $\rho_{\varphi}^{E}(x)=\left\Vert \varphi\left(\left\vert x\right\vert\right)\right\Vert_{E}\leq1$, we have $\varphi^{-1}(1/a_{E})<\left\vert x\left(t\right)\right\vert\leq b_{\varphi}$, if $\varphi(b_{\varphi}) <\infty$ and $\varphi^{-1}(1/a_{E})<\left\vert x\left(t\right)\right\vert<b_{\varphi}$, if $\varphi(b_{\varphi}) =\infty$ for $\mu$-a.e.\ $t\in A$. By the definition of $\varphi^{-1}$, we have $\varphi\left(\left\vert x\left(t\right)\right\vert\right)>1/a_{E}$ for $\mu$-a.e.\ $t\in A$. Then we can find a number $n\in\mathbb{N}$ and a set $A_{n}\in\Sigma$ with $\mu\left(A_{n}\right)>0$ such that $\varphi\left( \left\vert x\left(t\right)\right\vert\right)>1/a_{E}+1/n$ for $\mu $-a.e.\ $t\in A_{n}$. Hence
\begin{equation*}
\left\Vert\varphi\left(\left\vert x\right\vert\right)\right\Vert_{E}\geq\left\Vert\varphi\left(\left\vert x\right\vert\right)\chi_{A_{n}}\right\Vert _{E}\geq\left(1/a_{E}+1/n\right)\left\Vert\chi_{A_{n}}\right\Vert _{E}\geq\left( 1/a_{E}+1/n\right)a_{E}>1,
\end{equation*}
which gives a contradiction.

In particular, if $1/a_{E}\leq\varphi(b_{\varphi})$, then for any $x\in E_{\varphi}$ and any $\lambda>0$ such that $\rho_{\varphi}^{E}(\lambda x)=\left\Vert\varphi\left(\left\vert\lambda x\right\vert\right)\right\Vert_{E}\leq1$, we have $|\lambda x(t)|\leq\varphi^{-1}(1/a_{E})$ for $\mu$-a.e.\ $t\in T$ and, in consequence, $(E_{\varphi},\|\cdot\|_{\varphi})\equiv(E_{\psi},\|\cdot\|_{\psi})$ for any Orlicz function $\psi$ such that $\psi(u)=\varphi(u)$ for every $u\in[0,\varphi^{-1}(1/a_{E})]$.}\end{remark}

\begin{remark}\label{r43}{\rm Now we will show the naturalness of the assumption $\lim_{u\rightarrow\infty}\varphi(u)=\infty$ in the definition of Orlicz function (see Definition \ref{of}). Obviously, if $\alpha^{a}_{\varphi}>0$ or $\alpha^{\infty}_{\varphi}>0$, then $\lim_{u\rightarrow\infty}\varphi(u)=\infty$. Simultaneously, for $\varphi(u)=\min(u^{2},1)$, we have $\alpha^{0}_{\varphi}>0$ and  $\lim_{u\rightarrow\infty}\varphi(u)=1$. Let $E\subset L_{\infty}$, $\lim_{u\rightarrow\infty}\varphi(u)<\infty$ and $\alpha^{E}_{\varphi}=\alpha^{0}_{\varphi}>0$. Then condition (i) of Definition \ref{quasi-mod} holds whenever $\lim_{u\rightarrow\infty}\varphi(u)>1/a_{E}$, where $a_{E}$ is defined by formula \eqref{stala}, and, in consequence, $\rho_{\varphi}^{E}$ is quasi-modular. Moreover, defining then new Orlicz function $\psi$, by $\psi(u)=\varphi(u)$ for $u\in[0,\varphi^{-1}(1/a_{E})]$ and $\psi(u)=u-(\varphi^{-1}(1/a_{E})-1/a_{E})$ for $u>\varphi^{-1}(1/a_{E})$, we get $\lim_{u\rightarrow\infty}\psi(u)=\infty$ and $(E_{\varphi},\|\cdot\|_{\varphi})\equiv(E_{\psi},\|\cdot\|_{\psi})$ (see Remark \ref{r42-5}).}
\end{remark}

\begin{example}\label{e1}{\rm Assume $T=[0,\infty)$ and $\mu$ is Lebesgue measure.

$(i)$ If $E=L_{1}$ and $\varphi(u)=u^{p}$ for $u\geq0$, $p>0$, then $\rho_{\varphi}^{E}$ is a quasi-modular (a convex modular for $p>1$) as well as a modular (see \cite{mu}). We have
\[\|x\|_{\varphi}=\|x\|_{p}=\left(\int_{0}^{\infty}|x(t)|^{p}dt\right)^{\frac{1}{p}}.\]
Simultaneously, the F-norm is given by
\[|||x|||_{\varphi}:=\inf\left\{\lambda>0\colon\rho_{\varphi}^{E}\left(x/\lambda\right)\leq\lambda\right\}=\left(\int_{0}^{\infty}|x(t)|^{p}dt\right)^{\frac{1}{1+p}}\]
(see \cite{mu}). Note that $\|\cdot\|_{\varphi}$ and $|||\cdot|||_{\varphi}$ are not equivalent.

$(ii)$ Let $E=L_{(1/4)}$ and $\varphi(u)=u^{2}$ for $u\geq0$. Obviously $\rho_{\varphi}^{E}$ is a quasi-modular. Simultaneously, for $x=\chi_{[0,1)}$ and $y=\chi_{[1,2)}$ we get 
\[\rho_{\varphi}^{E}\left(\frac{1}{2}x+\frac{1}{2}y\right)=4>2=\rho_{\varphi}^{E}(x)+\rho_{\varphi}^{E}(y).\]
Thus $\rho_{\varphi}^{E}$ is not a modular.

$(iii)$ If $E=L_{1}$ and $\varphi(u)=\arctan(u)$ for $u\geq0$ (see Definition \ref{of}), then $\rho_{\varphi}^{E}$ is a modular (see \cite{mu}). Now, we will show that $\rho_{\varphi}^{E}$ is not a quasi-modular, more precisely, $\rho_{\varphi}^{E}$ does not satisfy condition $(v)$ of Definition \ref{quasi-mod}. Let $p>0$ and take $A=\pi/2$ and $\varepsilon=\pi/8$. Then for $x_{n}=n\chi_{[0,1)}$ and $a_{n}=1/n$, we obtain $\rho_{\varphi}^{E}(x_{n})\leq\pi/2$, $\rho_{\varphi}^{E}(a_{n}x_{n})=\pi/4$ and $\lim_{n\rightarrow\infty}(a_{n})^{p}=0$. In consequence, for any $K\geq1$ there exists $n\in\mathbb N$ such that $\rho_{\varphi}^{E}(a_{n}x_{n})>Ka_{n}^{p}\rho_{\varphi}^{E}(x_{n})+\pi/8$.

$(iv)$ Let $E=L_{1}$, $\varphi(u)=u$ for $u\in[0,1]$ and $\varphi(u)=\max(1,u-1)$ for $u>1$. For $x=\chi_{[0,1)}$ we have $\rho_{\varphi}^{E}(x)=\rho_{\varphi}^{E}(2x)=1$. Simultaneously, $\rho_{\varphi}^{E}(x/\lambda)>1$ for $\lambda<1/2$, so $\|x\|_{\varphi}=\frac{1}{2}$ (see Remark \ref{re1}).
}\end{example}
\vspace{3mm}

Recall the notion of uniform monotonicity (see for example \cites{b,hkm1,lee}) which plays important role in the theory of Banach lattices. A quasi-Banach lattice $\left(E,\left\Vert\cdot\right\Vert_{E}\right)$ is said to be {\em uniformly monotone} provided for each $\varepsilon>0$ there exists $\delta=\delta\left(\varepsilon\right)>0$ such that for all $x,y\in E_{+}$ with $\left\Vert x\right\Vert_{E}=1$ we have $\left\Vert x+y\right\Vert _{E}\geq1+\delta$ whenever $\left\Vert y\right\Vert_{E}\geq\varepsilon$.

\begin{lemma}\label{jum}
If $E$ is uniformly monotone, then for each $\varepsilon_{1}>0$ and $A>0$ there exists $\delta_{1}=\delta_{1}\left(\varepsilon_{1},A\right)
=\delta\left(\frac{\varepsilon_{1}}{A}\right)>0$ $($here the function $\delta\left(\cdot\right) $ comes from the definition of the uniform monotonicity$)$ such that for all $x,y\in E_{+}$ we have $\left\Vert x+y\right\Vert_{E}\geq\left\Vert x\right\Vert_{E}\left(1+\delta_{1}\right)$ whenever $\left\Vert y\right\Vert_{E}\geq\varepsilon_{1}$ and $\left\Vert x\right\Vert_{E}\leq A$.
\end{lemma}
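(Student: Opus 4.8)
The plan is to reduce the asserted inequality, by a homogeneity/scaling argument, to the normalized situation that is literally the content of the definition of uniform monotonicity. First I would dispose of the trivial case $x=0$: then $\left\Vert x\right\Vert_{E}=0$ and the claimed inequality $\left\Vert x+y\right\Vert_{E}\geq 0$ holds automatically, so from now on we may assume $x\neq 0$, i.e. $\left\Vert x\right\Vert_{E}>0$.

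Next I would put $\widetilde{x}=x/\left\Vert x\right\Vert_{E}$ and $\widetilde{y}=y/\left\Vert x\right\Vert_{E}$; since $E$ is an ideal space both elements lie in $E_{+}$, and $\left\Vert\widetilde{x}\right\Vert_{E}=1$ by positive homogeneity of the quasi-norm (condition $(ii)$ of Definition \ref{quasi-norm}). The crucial observation is that the two hypotheses combine into one estimate for $\widetilde{y}$: from $\left\Vert y\right\Vert_{E}\geq\varepsilon_{1}$ and $\left\Vert x\right\Vert_{E}\leq A$ we get $\left\Vert\widetilde{y}\right\Vert_{E}=\left\Vert y\right\Vert_{E}/\left\Vert x\right\Vert_{E}\geq\varepsilon_{1}/A$. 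Now I would apply the definition of uniform monotonicity with $\varepsilon:=\varepsilon_{1}/A$ to the pair $\widetilde{x},\widetilde{y}$, which produces a constant $\delta(\varepsilon_{1}/A)>0$, depending only on the ratio $\varepsilon_{1}/A$, such that $\left\Vert\widetilde{x}+\widetilde{y}\right\Vert_{E}\geq 1+\delta(\varepsilon_{1}/A)$.

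Finally I would multiply this inequality through by $\left\Vert x\right\Vert_{E}$, again using positive homogeneity, to obtain $\left\Vert x+y\right\Vert_{E}=\left\Vert x\right\Vert_{E}\left\Vert\widetilde{x}+\widetilde{y}\right\Vert_{E}\geq\left\Vert x\right\Vert_{E}\bigl(1+\delta(\varepsilon_{1}/A)\bigr)$, and then set $\delta_{1}=\delta_{1}(\varepsilon_{1},A):=\delta(\varepsilon_{1}/A)$, which is exactly the claimed dependence. There is essentially no serious obstacle here; the only point worth a moment's care is precisely the middle step — verifying that the normalized element $\widetilde{y}$ still satisfies a lower norm bound, namely $\left\Vert\widetilde{y}\right\Vert_{E}\geq\varepsilon_{1}/A$, so that the threshold fed into the definition of uniform monotonicity can be chosen uniformly in terms of $\varepsilon_{1}$ and $A$ alone and not in terms of $x$ or $y$ separately.
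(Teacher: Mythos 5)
Your proof is correct and is essentially the paper's own argument: normalize by $\left\Vert x\right\Vert_{E}$, note $\left\Vert\widetilde{y}\right\Vert_{E}\geq\varepsilon_{1}/A$, apply uniform monotonicity with $\varepsilon=\varepsilon_{1}/A$, and scale back. The only (harmless) addition is your explicit treatment of the trivial case $x=0$, which the paper leaves implicit.
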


\begin{proof} The proof can be found in \cite{hkm1} (the same for the quasi-norm). However, we will need the precise form of the function $\delta_{1}\left(\varepsilon_{1},A\right)$, so we present the proof for reader's convenience.

Let $\varepsilon_{1}>0$ and $A>0$. Take $x,y\in E_{+}$ such that $\left\Vert y\right\Vert_{E}\geq\varepsilon_{1}$ and $\left\Vert x\right\Vert_{E}\leq A$. Denote 
\begin{equation*}
\widetilde{x}=\frac{x}{\left\Vert x\right\Vert_{E}}\text{ and }\widetilde{y}=\frac{y}{\left\Vert x\right\Vert_{E}}.
\end{equation*}
Then $\left\Vert\widetilde{x}\right\Vert_{E}=1$ and $\left\Vert\widetilde{y}\right\Vert_{E}\geq\frac{\varepsilon_{1}}{A}$. By uniform monotonicity of $E$, there exists $\delta_{1}=\delta_{1}\left(\varepsilon_{1},A\right)=\delta\left(\frac{\varepsilon_{1}}{A}\right)>0$ such that 
$\left\Vert\widetilde{x}+\widetilde{y}\right\Vert_{E}\geq1+\delta_{1}$, whence
\begin{equation*}
\left\Vert x+y\right\Vert_{E}\geq\left\Vert x\right\Vert_{E}\left(1+\delta_{1}\right).
\end{equation*}\end{proof}

Recall that the assumption $\alpha _{\varphi }^{E}>0$ is important to show that $\rho_{\varphi }^{E}$ is a quasi-modular (see Theorem \ref{l41} and also Theorem \ref{r42}) and, consequently, that $\Vert\cdot\Vert_{\varphi}$ is a quasi-norm. However, if we know nothing about the index $\alpha_{\varphi }^{E}$ then still we may define a functional 
\begin{equation*}
\rho _{\varphi }^{E}(x):=\left\{\begin{array}{cc}
\Vert\varphi\left(|x|\right)\Vert_{E} & \text{if }\varphi\left(|x|\right)\in E,\text{ } \\ 
\infty & \text{otherwise,}
\end{array}\right. 
\end{equation*}
and the set 
\begin{equation*}
E_{\varphi}=\{x\in L^{0}:\lim_{\lambda\rightarrow0}\rho_{\varphi}^{E}(\lambda x)=0\}.
\end{equation*}
Then, it is easy to see that $E_{\varphi }$ is a linear space and we may consider the functional 
\begin{equation*}
\Vert x\Vert _{\varphi}=\inf\left\{\lambda>0:\rho_{\varphi}^{E}\left(x/\lambda\right)\leq 1\right\} ,\text{ for }x\in E_{\varphi},
\end{equation*}
which satisfies the conditions $\left(i\right) $ and $\left( ii\right) $ of the quasi-norm definition. We are going to show that, under some natural assumptions, the condition $\alpha _{\varphi }^{E}>0$ can be even necessary, that is to say, if the functional $\Vert\cdot\Vert_{\varphi}$ is a quasi-norm, then $\alpha_{\varphi}^{E}>0$. Since the below result is only some illustration how natural is the assumption that $\alpha _{\varphi}^{E}>0$, we will limit ourselves only to one case of the ideal space $E$.

\begin{theorem}\label{tkol}
Suppose $\varphi$ is a finitely valued, strictly increasing Orlicz function. Let $\left(E,\left\Vert\cdot\right\Vert_{E}\right)$ be a $p$-normed
ideal space over non-atomic measure space $(T,\Sigma,\mu )$ such that $E$ is uniformly monotone. Assume that neither $L_{\infty }\subset E$ nor $E\subset L_{\infty}$. If $\left(E_{\varphi},\Vert\cdot\Vert_{\varphi}\right)$ is quasi-normed space, then $\alpha_{\varphi}^{a}>0$.
\end{theorem}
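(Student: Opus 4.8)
The strategy is to argue by contrapositive: assume $\alpha_{\varphi}^{a}=0$ and construct, for each $n$, a function $x_n$ and scalar $a_n\in(0,1]$ witnessing a failure of the quasi-norm's triangle inequality, thereby contradicting the assumption that $\|\cdot\|_{\varphi}$ is a quasi-norm. By Theorem \ref{r42}$(i)$, $\alpha_{\varphi}^{a}=0$ means that for every $p>0$ and every $K\geq1$ there are $u,a$ with $\varphi(au)>Ka^{p}\varphi(u)$; in particular, taking $p$ small, we obtain sequences $a_n\downarrow0$ and $u_n>0$ with $\varphi(a_n u_n)/\varphi(u_n)\to\infty$ faster than any power of $a_n$. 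Since $\varphi$ is finitely valued and strictly increasing on $[0,\infty)$ (so $a_{\varphi}=0$, $b_{\varphi}=\infty$, and $\varphi^{-1}$ is a genuine two-sided inverse by Lemma \ref{skladanie}), these values are all finite and the construction below can be carried out inside $L^0$.

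The key quantitative tool is Lemma \ref{jum}: because $E$ is uniformly monotone, disjoint positive pieces add ``super-additively'' in a controlled way, with modulus $\delta_1(\varepsilon_1,A)=\delta(\varepsilon_1/A)$. First I would exploit that $E$ is in class $(1)$ (neither $L_\infty\subset E$ nor $E\subset L_\infty$) together with non-atomicity to find, for prescribed target norm-levels, characteristic functions $\chi_{A}$ of arbitrarily small and arbitrarily large $E$-norm; splitting a set and using the Fatou property / absolute continuity of the norm on such a non-atomic space lets me tune $\|\chi_{A_n}\|_E$ to any desired value. Then the plan is: build $x_n = c_n\chi_{B_n} + d_n\chi_{C_n}$ (or a single-level function scaled appropriately) so that $\rho_{\varphi}^{E}(x_n)\leq 1$, hence $\|x_n\|_{\varphi}\leq 1$ by Lemma \ref{f_x}$(ii)$, while $\rho_{\varphi}^{E}(2 x_n)$ — or, more to the point, $\rho_{\varphi}^{E}(x_n/\lambda)$ for $\lambda$ close to $1$ — stays $\geq 1$ by a margin that, using the index-zero blow-up of $\varphi(au)/\varphi(u)$, forces $\|x_n\|_{\varphi}$ to be bounded below away from $0$ while $\|x_n + x_n'\|_{\varphi}$ for a suitable disjoint translate/copy $x_n'$ grows without the constant $C$ from Definition \ref{quasi-norm}$(iii)$ being able to absorb it. Concretely one plays the scalar $a_n$ off against the norm of the supporting set: $\rho_{\varphi}^{E}(a_n\,(u_n\chi_{D_n})) = \varphi(a_n u_n)\|\chi_{D_n}\|_E$ versus $a_n^{p}\rho_{\varphi}^{E}(u_n\chi_{D_n}) = a_n^p\varphi(u_n)\|\chi_{D_n}\|_E$, and the ratio is exactly $\varphi(a_n u_n)/(a_n^p\varphi(u_n))\to\infty$ independently of $\|\chi_{D_n}\|_E$ — so the modular fails condition $(v)$ on these test functions with the same $\|\chi_{D_n}\|_E$ free to be chosen so that $\rho$-values land near $1$.

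Turning that modular failure into a \emph{quasi-norm} failure is where uniform monotonicity enters decisively and is the main obstacle. The idea is: if $\|\cdot\|_\varphi$ were a quasi-norm with constant $C$, then by homogeneity and Lemma \ref{f_x}$(ii)$ one gets a two-sided comparison between $\rho_{\varphi}^{E}(x)$ being $\leq 1$ and $\|x\|_\varphi\leq 1$, and uniform monotonicity of $E$ upgrades this: using Lemma \ref{jum} one shows $\rho_{\varphi}^{E}$ is ``almost left-multiplicative'' in the sense that $\|tx\|_\varphi \geq \eta(t)\|x\|_\varphi$ with $\eta(t)>0$ a fixed gauge, which would force a $\Delta_2$-type growth estimate on $\varphi$ near each level — precisely a statement equivalent to $\alpha_\varphi^a>0$ via Theorem \ref{r42}$(i)$. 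I expect the delicate point to be passing from the pointwise/modular inequality $\varphi(a_nu_n)>n\,a_n^p\varphi(u_n)$ to a norm inequality that survives the quasi-triangle constant: one must arrange the test functions as sums of \emph{disjointly supported} blocks (so that uniform monotonicity gives a genuine additive gain $\|x+y\|_E\geq\|x\|_E(1+\delta_1)$, with $\delta_1$ \emph{not} depending on $n$ because the relevant ratio $\varepsilon_1/A$ is controlled), iterate this gain across $m_n\to\infty$ blocks to beat any fixed $C$, and simultaneously keep the total modular value pinned near $1$ so the Luxemburg gauge reads off the failure. Once the block count needed to overwhelm $C$ is shown finite for each $n$ but the per-block discrepancy forced by $\alpha_\varphi^a=0$ does not vanish, we obtain $\|x_n+x_n'+\dots\|_\varphi > C(\|x_n\|_\varphi+\dots)$, contradicting Definition \ref{quasi-norm}$(iii)$, and therefore $\alpha_\varphi^a>0$.
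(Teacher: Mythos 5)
Your plan correctly assembles several of the right ingredients (the Darboux-type tuning of $\left\Vert\chi_{A}\right\Vert_{E}$ on a non-atomic measure space, Lemma \ref{jum}, Lemma \ref{skladanie} for $\varphi^{-1}$), but the step you yourself flag as ``the main obstacle'' --- converting the hypothesis $\alpha_{\varphi}^{a}=0$ (a pointwise/modular failure) into a violation of the quasi-triangle inequality for $\Vert\cdot\Vert_{\varphi}$ --- is exactly the heart of the theorem, and your sketch of it does not work as stated. The claim that $\Vert tx\Vert_{\varphi}\geq\eta(t)\Vert x\Vert_{\varphi}$ is vacuous (it holds with $\eta(t)=t$ by homogeneity) and forces nothing about $\varphi$; and the multi-block scheme ``iterate the gain across $m_{n}\to\infty$ blocks to beat any fixed $C$'' ignores that each extra block also costs a factor of $C$: a quasi-norm with constant $C$ only gives $\Vert\sum_{i=1}^{m}x_{i}\Vert\leq\sum_{i}C^{i}\Vert x_{i}\Vert$, so the per-block gain $(1+\delta_{1})$ coming from uniform monotonicity (which moreover lives at the level of $\Vert\varphi(|x|)\Vert_{E}$, not of $\Vert x\Vert_{\varphi}$) must be shown to compound faster than $C$ does, and you give no mechanism for transferring it from the modular to the Luxemburg gauge. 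Likewise, your test functions witness a failure of condition $(v)$ at scales $a_{n}\to0$ (comparing $x$ with $a_{n}x$), but a violation of subadditivity concerns two disjointly supported elements at comparable scales; the passage between these two kinds of failure is never made.

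The paper's proof shows what the missing mechanism is, and it needs only two sets. Assuming $\Vert\cdot\Vert_{\varphi}$ is a quasi-norm with constant $C$, one uses the identity $\Vert\chi_{A}\Vert_{\varphi}=1/\varphi^{-1}\left(1/\Vert\chi_{A}\Vert_{E}\right)$ (valid since $\varphi$ is finite and strictly increasing, by Lemma \ref{skladanie}); for each $s>0$ the Darboux property of the submeasure $\nu(A)=\Vert\chi_{A}\Vert_{E}$ produces $A$ with $\varphi(s)=1/\bigl(\Vert\chi_{A}\Vert_{E}(1+\delta_{0})\bigr)$ and a disjoint $B$ with $\Vert\chi_{B}\Vert_{E}=\tfrac12\Vert\chi_{A}\Vert_{E}$; Lemma \ref{jum} gives $\Vert\chi_{A\cup B}\Vert_{E}\geq(1+\delta_{0})\Vert\chi_{A}\Vert_{E}$, and the quasi-triangle inequality applied to $\chi_{A},\chi_{B}$ then yields $(1+\delta_{0})\varphi(s)\leq\varphi(2Cs)$ for every $s>0$. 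Thus $C$ enters as a dilation factor $2C$ in the argument of $\varphi$ and uniform monotonicity as the factor $(1+\delta_{0})$ on the value side; iterating this doubling-type inequality gives $\varphi(bu)\leq b^{p}(2C)^{p}\varphi(u)$ with $p=\ln(1+\delta_{0})/\ln(2C)$, i.e.\ $\alpha_{\varphi}^{a}>0$. Your proposal never identifies this transfer (the inverse-function formula for $\Vert\chi_{A}\Vert_{\varphi}$ and the resulting comparison of $\varphi$ at $s$ and $2Cs$), so as written it remains a plan with the decisive step unproved.
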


\begin{proof}
Denote by $C\geq1$ the constant from the quasi-triangle inequality for $\left(E_{\varphi},\Vert\cdot\Vert_{\varphi}\right)$. Let $\delta_{0}=\delta\left(1/2\right)$ be the constant from the definition of the uniform monotonicity. Fix $s>0$. Recall also that if $E$ is uniformly monotone, then $E$ is order continuous (see \cite{lee}*{Proposition 2.4}). Thus, by the proof of Theorem 2.4 in \cite{kmp}, the function $\nu$, defined by $\nu(A)=\|\chi_{A}\|_{E}$ for all $A\in\Sigma$, $\chi_{A}\in E$, is the submeasure in the sense of \cite{Dobr}*{Definition 1}, whence by \cite{Dobr}*{Theorem 10}, $\nu$ has the Darboux property. In consequence, we can find a set $A\in\Sigma$, $\chi_{A}\in E$ satisfying 
\begin{equation*}
\varphi\left(s\right)=\frac{1}{\left\Vert\chi_{A}\right\Vert_{E}\left(1+\delta _{0}\right)}.
\end{equation*}
Take a set $B\in\Sigma$ of positive measure such that $\chi _{B}\in E$, $A\cap B=\emptyset$ and $\left\Vert\chi_{B}\right\Vert_{E}=\frac{1}{2}\left\Vert\chi_{A}\right\Vert_{E}$. Applying Lemma \ref{jum} we conclude that 
\begin{equation}
\left\Vert\chi_{A\cup B}\right\Vert_{E}\geq\left\Vert\chi_{A}\right\Vert_{E}\left(1+\delta_{0}\right). \label{um1}
\end{equation}
It is well known that 
\begin{equation*}
\left\Vert\chi_{A}\right\Vert_{\varphi}=\frac{1}{\varphi^{-1}\left(\frac{1}{\left\Vert\chi_{A}\right\Vert_{E}}\right)},
\end{equation*}
where $\varphi^{-1}$ is the general right-inverse to $\varphi$. Indeed, applying Lemma \ref{skladanie}, we conclude that $\left\Vert\varphi \left(\frac{\chi_{A}}{\lambda}\right)\right\Vert_{E}\leq1$ if and only if $\frac{1}{\lambda}\leq\varphi^{-1}\left(\frac{1}{\left\Vert\chi_{A}\right\Vert_{E}}\right)$. In consequence,
\begin{equation*}
C\geq\frac{\left\Vert\chi_{A}+\chi_{B}\right\Vert_{\varphi}}{\left\Vert\chi_{A}\right\Vert_{\varphi}+\left\Vert\chi_{B}\right\Vert_{\varphi}}\geq\frac{\left\Vert\chi_{A}+\chi_{B}\right\Vert_{\varphi}}{2\left\Vert\chi_{A}\right\Vert_{\varphi}}=\frac{\left\Vert\chi_{A\cup B}\right\Vert _{\varphi}}{2\left\Vert\chi_{A}\right\Vert_{\varphi}}=\frac{\varphi^{-1}\left(\frac{1}{\left\Vert\chi_{A}\right\Vert_{E}}\right)}{2\varphi^{-1}\left(\frac{1}{\left\Vert\chi_{A\cup B}\right\Vert_{E}}\right)}.
\end{equation*}
Moreover, applying also (\ref{um1}), we get
\begin{equation*}
2C\varphi^{-1}\left(\frac{1}{\left\Vert\chi_{A}\right\Vert_{E}\left(1+\delta_{0}\right)}\right)\geq2C\varphi^{-1}\left(\frac{1}{\left\Vert\chi_{A\cup B}\right\Vert_{E}}\right)\geq\varphi^{-1}\left(\frac{1}{\left\Vert\chi_{A}\right\Vert_{E}}\right)
\end{equation*}
and, applying again Lemma \ref{skladanie}, we obtain 
\begin{equation*}
\varphi\left[2C\varphi^{-1}\left(\frac{1}{\left\Vert\chi_{A}\right\Vert_{E}\left(1+\delta_{0}\right)}\right)\right]\geq\varphi\left[\varphi^{-1}\left(\frac{1}{\left\Vert\chi_{A}\right\Vert_{E}}\right)\right]=\frac{1}{\left\Vert\chi_{A}\right\Vert _{E}}
\end{equation*}
Taking $C_{1}=2C$, see Lemma \ref{skladanie}, we have
\begin{equation*}
\left(1+\delta_{0}\right)\varphi\left(s\right)\leq\varphi\left[2C\varphi^{-1}\left(\varphi\left(s\right)\right)\right]=\varphi\left[
2Cs\right]=\varphi\left[C_{1}s\right] .
\end{equation*}
for each $s>0$. For every $a\geq1$ there is $m\in\mathbb{N}$ such that $C_{1}^{m-1}\leq a<C_{1}^{m}$. Fix $p>0$ satisfying $p=\frac{\ln \left(1+\delta_{0}\right)}{\ln C_{1}}$. Then $\left(1+\delta _{0}\right)^{m-1}=\left(C_{1}^{m-1}\right)^{p}$ and 
\begin{equation*}
\varphi\left(as\right)\geq\varphi\left(C_{1}^{m-1}s\right)\geq\left(1+\delta_{0}\right)^{m-1}\varphi\left(s\right)=\left(C_{1}^{m-1}\right)^{p}\varphi \left( s\right)\geq\left(\frac{a}{C_{1}}\right)^{p}\varphi\left(s\right)=a^{p}C_{1}^{-p}\varphi\left(s\right)
\end{equation*}
for each $s>0$. Setting $u:=as$ and $b:=1/a$ we conclude that
\begin{equation*}
\varphi\left(bu\right)\leq b^{p}C_{1}^{p}\varphi\left(u\right)
\end{equation*}
for any $u>0$ and each $b\in(0,1]$. It means that $\alpha_{\varphi}^{a}>0$.
\end{proof}

From Theorems \ref{t31}, \ref{l41} and \ref{tkol}, taking $E=L_{1}([0,\infty))$, we get immediately the respective charac\-te\-ri\-zation for Orlicz spaces proved directly\ in \cite{kz}*{Theorem 1.8}.

\section{Order continuity and copies of $l^{\infty}$}

\begin{remark}{\rm It is easy to show, that if $a_{\varphi}=b_{\varphi}$, then $(E_{\varphi},\|\cdot\|_{\varphi})\equiv\left(L_{\infty},\frac{1}{b_{\varphi}}\|\cdot\|_{\infty}\right)$. Therefore, from now on, we will assume that $a_{\varphi}<b_{\varphi}$.}
\end{remark}

Recall that an Orlicz function $\varphi$ satisfies the condition $\Delta_{2}$ for all $u\in\mathbb R_{+}$ ($\varphi\in\Delta_{2}(\mathbb R_{+})$ for short) if there exists a constant $K>0$ such that the inequality 
\begin{equation}
\varphi(2u)\leq K\varphi(u)\label{roo1}
\end{equation}
holds for any $u\in\mathbb R_{+}$ (then we have $a_{\varphi}=0$ and $b_{\varphi}=\infty$). Analogously, we say that an Orlicz function $\varphi$ satisfies the condition $\Delta_{2}$ at infinity [at zero] ($\varphi\in\Delta_{2}({\infty})$ [$\varphi\in\Delta_{2}({0})$] for short) if there exist constants $K,u_{0}\in(0,\infty)$ such that $\varphi(u_{0})<\infty$ [$\varphi(u_{0})>0$] and inequality (\ref{roo1}) holds for any $u\geq u_{0}$ [$0\leq u\leq u_{0}$], respectively. Clearly, if $\varphi\in\Delta_{2}({\infty})$ [$\varphi\in\Delta_{2}({0})$], then $b_{\varphi}=\infty$ [$a_{\varphi}=0$].

For any quasi-Banach ideal space $E$ and any Orlicz function $\varphi$ we say that $\varphi$ satisfies condition $\Delta^{E}_{2}$ ($\varphi\in\Delta^{E}_{2}$ for short) if:\\
$(1)$ $\varphi\in\Delta_{2}(\mathbb R_{+})$ whenever neither $L_{\infty}\subset E$ nor $E\subset L_{\infty}$,\\
$(2)$ $\varphi\in\Delta_{2}(\infty)$ whenever $L_{\infty}\subset E$,\\
$(3)$ $\varphi\in\Delta_{2}(0)$ whenever $E\subset L_{\infty}$.

\begin{theorem}\label{tko1}
Suppose $\mu$ is nonatomic and $E_{a}\neq \left\{ 0\right\}$. If $\varphi\notin\Delta_{2}\left(\infty\right)$, then the space $E_{\varphi}$
contains an order linearly isometric copy of $l_{\infty}$.
\end{theorem}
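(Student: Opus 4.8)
The plan is to exhibit, inside $E_\varphi$, a sublattice lattice-isometric to $l_\infty$ that is spanned by pairwise disjoint positive elements. Concretely I will construct pairwise disjoint $x_n\in (E_\varphi)_+$, $n\in\mathbb N$, with
\begin{itemize}
\item[(a)] $\Vert x_n\Vert_\varphi=1$ for every $n$, and
\item[(b)] $\rho_\varphi^E\big(\sum_{n=1}^N x_n\big)\le 1$ for every $N$.
\end{itemize}
Granting this, the partial sums of $\varphi(x)=\sum_n\varphi(x_n)$ increase to $\varphi(x)$, so (b) together with the Fatou property of $E$ gives $\rho_\varphi^E(x)\le1$ for $x:=\sum_{n=1}^\infty x_n$, whence $x\in E_\varphi$ and $\Vert x\Vert_\varphi\le1$ by Lemma \ref{f_x}$(ii)$. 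Defining $T:l_\infty\to E_\varphi$ by $T\big((a_n)\big)=\sum_{n=1}^\infty a_nx_n$, disjointness and $|T(a)|=\sum_n|a_n|x_n\le\Vert a\Vert_\infty x$ make $T$ well defined, linear, and a lattice homomorphism onto its range; moreover $\Vert T(a)\Vert_\varphi\le\Vert a\Vert_\infty\Vert x\Vert_\varphi\le\Vert a\Vert_\infty$, while $\Vert T(a)\Vert_\varphi\ge\Vert a_nx_n\Vert_\varphi=|a_n|$ for all $n$, so $\Vert T(a)\Vert_\varphi=\Vert a\Vert_\infty$. Thus $T(l_\infty)$ is the desired copy, and everything reduces to constructing the $x_n$.

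First I prepare a supply of building blocks. Since $E_a\ne\{0\}$, choosing $0\le z\in E_a\setminus\{0\}$, truncating, and intersecting with a set of finite measure produces $A_0\in\Sigma$ with $0<\mu(A_0)<\infty$ and $\chi_{A_0}\in E_a$. Using nonatomicity, fix a measure-continuous increasing family $(A_0^{(s)})_{0\le s\le\mu(A_0)}$ of subsets of $A_0$ with $A_0^{(0)}=\emptyset$ and $A_0^{(\mu(A_0))}=A_0$. Then $s\mapsto\Vert\chi_{A_0^{(s)}}\Vert_E$ is nondecreasing, right continuous (because $\chi_{A_0^{(s')}}\downarrow\chi_{A_0^{(s)}}$ with all terms dominated by the order continuous element $\chi_{A_0}$) and left continuous (by the Fatou property), hence continuous from $0$ to $\Vert\chi_{A_0}\Vert_E$; by the intermediate value theorem every value in $[0,\Vert\chi_{A_0}\Vert_E]$ is attained as $\Vert\chi_B\Vert_E$ for some $B\subseteq A_0$. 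Applying this repeatedly inside successive complements and using the Aoki--Rolewicz $p$-norm of $E$ to keep the residual $E$-norm bounded below, one peels off pairwise disjoint sets $B_{n,k}\subseteq A_0$ ($n,k\in\mathbb N$) whose $E$-norms $\Vert\chi_{B_{n,k}}\Vert_E$ equal any prescribed sequence of sufficiently small positive numbers.

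Now the role of $\varphi\notin\Delta_2(\infty)$. If $b_\varphi<\infty$ (which already forces $\varphi\notin\Delta_2(\infty)$) the construction is easy: pick $v_k\uparrow b_\varphi$, $v_k<b_\varphi$, $\varphi(v_k)<\infty$, increasing to $b_\varphi$ fast enough that for each fixed $t>1$ one has $tv_k>b_\varphi$ for large $k$; set $x_n:=\sum_k v_k\chi_{B_{n,k}}$ with $\Vert\chi_{B_{n,k}}\Vert_E:=\eta\,2^{-n-k}/(1+\varphi(v_k))$. Then $\rho_\varphi^E\big(\sum_{n\le N}x_n\big)^p\lesssim\sum_{n,k}\varphi(v_k)^p\Vert\chi_{B_{n,k}}\Vert_E^p\lesssim\eta^p$, which is $\le1$ for small $\eta$, giving (b); and $\rho_\varphi^E(tx_n)\ge\Vert\varphi(tv_k)\chi_{B_{n,k}}\Vert_E=\infty$ for $t>1$ and $k$ large, which together with $\rho_\varphi^E(x_n)\le1$ yields $\Vert x_n\Vert_\varphi=1$, i.e.\ (a). If $b_\varphi=\infty$ (so $\varphi$ is finite and continuous on all of $[0,\infty)$ and $\varphi\notin\Delta_2(\infty)$ is a genuine restriction), a chaining argument upgrades the failure of $\Delta_2(\infty)$ to: for any prescribed rate $R_k\to\infty$ there exist $v_k\uparrow\infty$ with $0<\varphi(v_k)<\infty$ and $\varphi\big((1+2^{-k})v_k\big)\ge R_k\,\varphi(v_k)$ -- writing $2\le(1+2^{-k})^{m_k}$ and noting $\varphi(2u)/\varphi(u)\le\big(\max_{0\le j<m_k}\varphi((1+2^{-k})^{j+1}u)/\varphi((1+2^{-k})^{j}u)\big)^{m_k}$, a bad ratio $\varphi(2u)/\varphi(u)$ (available for $u$ arbitrarily large) forces one of these factors large, and one diagonalises over $u\to\infty$. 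Choose $R_k$ so large that $\sum_k R_k^{-p/2}<\infty$, put $x_n:=\sum_k v_k\chi_{B_{n,k}}$ with $\Vert\chi_{B_{n,k}}\Vert_E:=\eta\,2^{-n}R_k^{-1/2}/\varphi(v_k)$ (here the exact prescription of $E$-norms of the $B_{n,k}$ from the previous paragraph is used). Then $\varphi(v_k)\Vert\chi_{B_{n,k}}\Vert_E=\eta\,2^{-n}R_k^{-1/2}$ has summable $p$-th powers, so (b) holds for small $\eta$; and for $t>1$, picking $k_0$ with $1+2^{-k_0}\le t$, one gets $\rho_\varphi^E(tx_n)\ge\sup_{k\ge k_0}\varphi(tv_k)\Vert\chi_{B_{n,k}}\Vert_E\ge\sup_{k\ge k_0}R_k\varphi(v_k)\Vert\chi_{B_{n,k}}\Vert_E=\sup_{k\ge k_0}\eta\,2^{-n}R_k^{1/2}=\infty$, so $\varphi(tx_n)\notin E$ and $\rho_\varphi^E(tx_n)=\infty$; with $\rho_\varphi^E(x_n)\le1$ this gives $\Vert x_n\Vert_\varphi=1$.

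The main obstacle is precisely this balancing in the case $b_\varphi=\infty$. One must extract from the bare failure of $\Delta_2(\infty)$ a sequence along which $\varphi$ is inflated by an arbitrarily large factor over an arbitrarily \emph{small} relative increment -- the smallness is essential, since a factor-$2$ version would only yield $\Vert x_n\Vert_\varphi\in[\tfrac12,1]$ and destroy the isometry -- and then tune the single parameter $\Vert\chi_{B_{n,k}}\Vert_E$ so that $\rho_\varphi^E(x_n)$ and $\rho_\varphi^E(\sum_{n\le N}x_n)$ stay $\le1$ while $\rho_\varphi^E(tx_n)=\infty$ for every $t>1$; these two demands pull the parameter in opposite directions, and since only a quasi-triangle inequality is available the disjoint sums must be controlled through the Aoki--Rolewicz $p$-norm rather than by a genuine triangle inequality. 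The subsidiary point needing care is the intermediate-value lemma for $B\mapsto\Vert\chi_B\Vert_E$ on subsets of $A_0$, which is exactly where the hypotheses $E_a\ne\{0\}$, the Fatou property, and nonatomicity of $\mu$ are jointly used.
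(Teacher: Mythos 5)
Your construction is essentially the paper's, reorganized: from $\varphi\notin\Delta_{2}(\infty)$ you extract (by the same chaining/diagonalisation the paper uses implicitly) arbitrarily large inflation of $\varphi$ over factors $1+2^{-k}$ at arguments tending to $b_{\varphi}$, carve disjoint sets of controlled norm out of a set $A_{0}$ with $\chi_{A_{0}}\in E_{a}$, and embed $l_{\infty}$ by $a\mapsto\sum_{n}a_{n}x_{n}$; the only structural difference is that you build each norm-one block $x_{n}$ directly as an infinite series over $k$, whereas the paper first builds one norm-one element $x=\sum_{n}x_{n}$ and regroups its pieces into the blocks $y_{m}$. That part, including the treatment of the two cases $b_{\varphi}<\infty$ and $b_{\varphi}=\infty$ and the isometry computation, is sound.

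The step that does not hold as written is your intermediate-value lemma for the quasi-norm. Left continuity of $s\mapsto\Vert\chi_{A_{0}^{(s)}}\Vert_{E}$ does follow from the Fatou property, but your argument for right continuity only gives $\Vert\chi_{A_{0}^{(s')}}-\chi_{A_{0}^{(s)}}\Vert_{E}\rightarrow0$, and a quasi-norm need not be continuous with respect to its own topology: from norm convergence one can only conclude $\limsup_{s'\downarrow s}\Vert\chi_{A_{0}^{(s')}}\Vert_{E}\leq C_{E}\Vert\chi_{A_{0}^{(s)}}\Vert_{E}$, so the map may jump and the exact values $\Vert\chi_{B_{n,k}}\Vert_{E}$ you prescribe need not be attainable. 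This is exactly why the paper does not work with $\Vert\cdot\Vert_{E}$ here: it passes to the equivalent Aoki--Rolewicz $p$-norm, shows that $B\mapsto\Vert\chi_{B}\Vert_{E,p}$ is a submeasure in Dobrakov's sense (the continuity needed there is where $\chi_{A}\in E_{a}$ enters), invokes Dobrakov's Darboux theorem to hit a prescribed value of the $p$-norm exactly, and then settles for the two-sided bound $\Vert\chi_{B}\Vert_{E,p}\leq\Vert\chi_{B}\Vert_{E}\leq2C_{E}\Vert\chi_{B}\Vert_{E,p}$ coming from \eqref{p-norma}. Fortunately your construction survives this repair verbatim: prescribe $\Vert\chi_{B_{n,k}}\Vert_{E,p}=\eta\,2^{-n}R_{k}^{-1/2}/\varphi(v_{k})$ (resp.\ $\eta\,2^{-n-k}/(1+\varphi(v_{k}))$ when $b_{\varphi}<\infty$); the upper estimate for $\rho_{\varphi}^{E}\bigl(\sum_{n\leq N}x_{n}\bigr)$ only picks up an extra factor $2C_{E}$, absorbed into $\eta$, while the lower estimate still reads $\varphi(tv_{k})\Vert\chi_{B_{n,k}}\Vert_{E}\geq R_{k}\varphi(v_{k})\Vert\chi_{B_{n,k}}\Vert_{E,p}=\eta\,2^{-n}R_{k}^{1/2}\rightarrow\infty$, forcing $\rho_{\varphi}^{E}(tx_{n})=\infty$ for every $t>1$. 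So the flaw is local and fixable, but as stated the exact-prescription claim for the quasi-norm, and the continuity argument behind it, are not valid.
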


\begin{proof} First assume that $b_{\varphi }=\infty$. Since $E_{a}\neq\left\{0\right\}$, there is a set $A\in\Sigma$ with $0<\mu \left(A\right) <\infty $ and $\chi_{A}\in E_{a}$. Let $\left( A_{n}\right) _{n=1}^{\infty }$ be a sequence of measurable, pairwise disjoint subsets of $A$. Applying the assumption that $\varphi\notin\Delta_{2}\left(\infty\right) $ we can find a number $u_{1}>0$ such that
\begin{equation*}
\varphi \left(u_{1}\right)\left\Vert\chi_{A_{1}}\right\Vert_{E,p}\geq \frac{1}{8C_{E}^{3}}
\end{equation*}
and
\begin{equation*}
\varphi\left(2u_{1}\right)>8C_{E}^{3}\cdot\varphi\left(u_{1}\right),
\end{equation*}
where $\left\Vert\cdot\right\Vert_{E,p}$ is a $p$-norm equivalent to $\left\Vert \cdot \right\Vert _{E}$ (see inequality \eqref{p-norma}). Denote by $\Sigma _{1}$ the $\sigma$-algebra of all measurable subsets of $A$. Note that the function $\nu\left(B\right)=\left\Vert\chi_{B}\right\Vert_{E,p}$ defined for all $B\in \Sigma _{1}$ is the submeasure on $\Sigma _{1}$ in the sense of Definition 1 from \cite{Dobr} which is shown in the proof of Theorem 2.4 in \cite{kmp}. Consequently, by Theorem 10 in \cite{Dobr}, $\nu$ has the\ Darboux property. Thus we find a set $B_{1}\in \Sigma_{1},B_{1}\subset A_{1}$ such that
\begin{equation*}
\varphi\left(u_{1}\right)\left\Vert\chi _{B_{1}}\right\Vert_{E,p}=\frac{1}{8C_{E}^{3}}.
\end{equation*}%
By inequality \eqref{p-norma} we get
\begin{equation*}
\frac{1}{8C_{E}^{3}}\leq \varphi \left( u_{1}\right) \left\Vert\chi_{B_{1}}\right\Vert _{E}\leq \frac{1}{4C_{E}^{2}}.
\end{equation*}%
Applying the assumption that $\varphi\notin \Delta_{2}\left(\infty\right) $ we can find a number $u_{2}>u_{1}$ satisfying
\begin{equation*}
\varphi\left(u_{2}\right)\left\Vert\chi_{A_{2}}\right\Vert_{E,p}\geq\frac{1}{16C_{E}^{4}}
\end{equation*}
and
\begin{equation*}
\varphi\left(\left(1+\frac{1}{2}\right)u_{2}\right)>16C_{E}^{4}\cdot\varphi\left(u_{2}\right) .
\end{equation*}
Analogously as above, there is set $B_{2}\in \Sigma _{1},B_{2}\subset A_{2}$ such that
\begin{equation*}
\varphi \left( u_{2}\right) \left\Vert \chi _{B_{2}}\right\Vert _{E,p}=\frac{1}{16C_{E}^{4}}.
\end{equation*}
In consequence,
\begin{equation*}
\frac{1}{16C_{E}^{4}}\leq\varphi\left(u_{2}\right)\left\Vert\chi_{B_{2}}\right\Vert _{E}\leq\frac{1}{8C_{E}^{3}}.
\end{equation*}
Proceeding in such a way by induction we can find an increasing sequence $\left(u_{n}\right) $ of positive numbers and a sequence of $\left(
B_{n}\right) $ of sets in $\Sigma _{1}$ such that $B_{n}\subset A_{n}$ for any $n\in \mathbb{N}$ and
\begin{equation}
\varphi\left(\left(1+\frac{1}{n}\right)u_{n}\right)>2^{n+2}C_{E}^{n+2}\cdot\varphi\left(u_{n}\right) \label{kl1}
\end{equation}
and
\begin{equation}
\frac{1}{2^{n+2}C_{E}^{n+2}}\leq\varphi\left(u_{n}\right)\left\Vert\chi_{B_{n}}\right\Vert _{E}\leq\frac{1}{2^{n+1}C_{E}^{n+1}}. \label{kl2}
\end{equation}
Let $x_{n}=u_{n}\chi _{B_{n}}$ for $n\in \mathbb{N}$ and
\begin{equation*}
x=\sum_{n=1}^{\infty }u_{n}\chi _{B_{n}}.
\end{equation*}
Applying inequality (\ref{kl2}) we get
\begin{equation*}
C_{E}\sum_{n=1}^{\infty}C_{E}^{n}\left\Vert\varphi\left( u_{n}\right)\chi_{B_{n}}\right\Vert_{E}\leq\sum_{n=1}^{\infty }\frac{C_{E}^{n+1}}{2^{n+1}C_{E}^{n+1}}\leq\frac{1}{2}.
\end{equation*}
Thus, by Theorem 1.1 from \cite{ma04}, we conclude that $\varphi \left(x\right)=\sum_{n=1}^{\infty}\varphi\left( u_{n}\right)\chi_{B_{n}}\in E$
and
\begin{equation*}
\rho_{\varphi}^{E}\left(x\right)=\left\Vert\varphi\left(x\right)\right\Vert_{E}=\left\Vert\sum_{n=1}^{\infty}\varphi\left(u_{n}\right)\chi _{B_{n}}\right\Vert_{E}\leq C_{E}\sum_{n=1}^{\infty}C_{E}^{n}\left\Vert\varphi\left(u_{n}\right)\chi _{B_{n}}\right\Vert_{E}\leq\frac{1}{2}.
\end{equation*}
Moreover, for each $\lambda>1$ there exists $n_{\lambda }\in\mathbb{N}$ such that $\lambda >1+\frac{1}{n_{\lambda}},$ whence, by inequalities (\ref{kl1}) and (\ref{kl2}),
\begin{eqnarray*}
\rho_{\varphi }^{E}\left(\lambda x\right)&=&\left\Vert\varphi\left(\lambda x\right)\right\Vert_{E}\geq\left\Vert\varphi\left(\left(1+
\frac{1}{n_{\lambda }}\right)x\right)\right\Vert_{E}\geq\left\Vert\varphi\left(\left(1+\frac{1}{n_{\lambda }}\right) u_{n_{\lambda}}\right)\chi_ {B_{n_{\lambda }}}\right\Vert_{E} \\
&>&2^{n_{\lambda }+2}C_{E}^{n_{\lambda }+2}\varphi \left( u_{n_{\lambda}}\right) \left\Vert \chi _{B_{n_{\lambda }}}\right\Vert _{E}\geq1.
\end{eqnarray*}
Thus $\left\Vert x\right\Vert _{\varphi }=1.$ Define
\begin{eqnarray}\label{yyy}
y_{1} &=&x_{1}+x_{3}+x_{5}+... \\ 
y_{2} &=&x_{2}+x_{6}+x_{10}+...\nonumber
\end{eqnarray}
and, by the induction, we define the element $y_{m}$ ($m\in \mathbb{N}$, $m\geq 3$) to be the sum of every second term $x_{n}$ of 
\begin{equation}\label{yyyy}
\sum_{n=1}^{\infty }x_{n}-\sum_{i=1}^{m-1}y_{i}
\end{equation}
starting from the first term of the rest. We can prove the same way as above that $\rho _{\varphi }^{E}\left( y_{m}\right) \leq 2^{-m}$ and $\left\Vert y_{m}\right\Vert_{\varphi}=1$ ($m\in\mathbb{N}$). Finally, we define an operator $P:l_{\infty }\rightarrow E_{\varphi }$ by the formula
\begin{equation*}
P\left(z\right)=\sum_{m=1}^{\infty }z_{m}y_{m},
\end{equation*}
where $z=\left(z_{m}\right)\in l_{\infty }$. Clearly, $P$ is linear and $P\left( z\right) \geq 0$ whenever $z\geq 0.$ Moreover, for each $z\in
l_{\infty }\backslash\left\{0\right\} $ we have
\begin{equation*}
\rho_{\varphi }^{E}\left(\frac{P\left(z\right)}{\left\Vert z\right\Vert_{l_{\infty}}}\right)=\left\Vert\sum_{m=1}^{\infty}\varphi\left(\frac{\left\vert z_{m}\right\vert y_{m}}{\left\Vert z\right\Vert _{l_{\infty }}}\right) \right\Vert_{E}\leq\left\Vert\sum_{m=1}^{\infty}\varphi\left(y_{m}\right) \right\Vert_{E}\leq\left\Vert\varphi\left(x\right)\right\Vert_{E}\leq \frac{1}{2},
\end{equation*}
whence $\left\Vert P\left( z\right) \right\Vert _{\varphi }\leq \left\Vert z\right\Vert _{l_{\infty }}.$ Furthermore, for each $\lambda<1$ we find $m_{\lambda }\in \mathbb{N}$ such that $\frac{\left\vert z_{m_{\lambda}}\right\vert}{\lambda\left\Vert z\right\Vert_{l_{\infty}}}=\lambda
_{0}>1$. In consequence,
\begin{equation*}
\left\Vert\frac{P\left(z\right)}{\lambda\left\Vert z\right\Vert_{l_{\infty}}}\right\Vert _{\varphi}\geq\left\Vert\frac{\left\vert z_{m_{\lambda}}\right\vert}{\lambda\left\Vert z\right\Vert_{l_{\infty }}}y_{m_{\lambda}}\right\Vert_{\varphi }=\frac{\left\vert z_{m_{\lambda}}\right\vert }{\lambda\left\Vert z\right\Vert_{l_{\infty}}}>1.
\end{equation*}
Thus $\left\Vert P\left( z\right)\right\Vert_{\varphi }>\lambda\left\Vert z\right\Vert_{l_{\infty}}$. Since $\lambda<1$ was arbitrary, $\left\Vert P\left( z\right)\right\Vert _{\varphi}\geq\left\Vert z\right\Vert_{l_{\infty }}$. Finally, $\left\Vert P\left(z\right)\right\Vert_{\varphi}=\left\Vert z\right\Vert _{l_{\infty }}$ for each $z\in l_{\infty }$ which finishes this part of the proof.

At the end assume that $b_{\varphi}<\infty$. Let
\begin{equation*}
u_{n}=\frac{2n+1}{2n+2}\cdot b_{\varphi}
\end{equation*}
and the sequence $\left(A_{n}\right)_{n=1}^{\infty}$ be defined as above. If $\varphi\left(u_{n}\right)\left\Vert\chi_{A_{n}}\right\Vert_{E,p}\leq \frac{1}{2^{n+2}C_{E}^{n+2}}$, then we set $B_{n}=A_{n}$, whence $\varphi\left(u_{n}\right)\left\Vert\chi_{B_{n}}\right\Vert_{E}\leq\frac{1}{2^{n+1}C_{E}^{n+1}}$ (see inequality (\ref{kl2})). In the opposite case (that is when $\varphi\left(u_{n}\right)\left\Vert\chi_{A_{n}}\right\Vert_{E,p}>\frac{1}{2^{n+2}C_{E}^{n+2}}$), analogously as above, we conclude that there are sets $B_{n}\in \Sigma _{1}$ such that $B_{n}\subset A_{n}$ and $\varphi\left(u_{n}\right)\left\Vert\chi_{B_{n}}\right\Vert_{E,p}=\frac{1}{2^{n+2}C_{E}^{n+2}}$ for any $n\in\mathbb{N}$. Hence
\begin{equation*}
\varphi\left(u_{n}\right)\left\Vert\chi_{B_{n}}\right\Vert_{E}\leq\frac{1}{2^{n+1}C_{E}^{n+1}}
\end{equation*}
for the same $n$. Again, taking $x_{n}=u_{n}\chi _{B_{n}}$ for $n\in \mathbb{N}$ and
\begin{equation*}
x=\sum_{n=1}^{\infty}u_{n}\chi _{B_{n}}
\end{equation*}%
we get $\rho_{\varphi}^{E}\left(x\right)\leq\frac{1}{2}$. Moreover, for any fixed $\lambda>1$  there exists $n_{\lambda}\in\mathbb{N}$ such that $\lambda>1+\frac{1}{n_{\lambda}}$. Since $\varphi\left(\left(1+\frac{1}{n_{\lambda}}\right)u_{n_{\lambda}}\right)=\infty$, we have $\varphi\left(\lambda x\right)\notin E$, whence $\rho_{\varphi }^{E}\left(\lambda x\right)=\infty$. By arbitrariness of $\lambda>1$, we get $\left\Vert x\right\Vert_{\varphi}=1.$ Taking the sequence $\left( y_{m}\right) $ as in the first part of the proof and defining  the operator $P:l_{\infty }\rightarrow E_{\varphi }$ by the formula
\begin{equation*}
P\left(z\right)=\sum_{m=1}^{\infty}z_{m}y_{m},
\end{equation*}%
where $z=\left(z_{m}\right)\in l_{\infty }$, we conclude that $P$ is an order linear isometry.
\end{proof}

\begin{theorem}\label{tko2}
Assume $\mu$ is nonatomic, $L_{\infty}\not\subset E$ and $\supp(E_{a})=T$. If $\varphi\notin\Delta_{2}\left(0\right)$, then the space $E_{\varphi}$ contains an order linearly isometric copy of $l_{\infty}$.
\end{theorem}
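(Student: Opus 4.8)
The plan is to run the argument of the first part (the case $b_{\varphi}=\infty$) of the proof of Theorem \ref{tko1}, with the behaviour of $\varphi$ at infinity replaced by its behaviour at zero, so that one works with a sequence $u_{n}\downarrow0$ instead of $u_{n}\uparrow\infty$. The two extra hypotheses $L_{\infty}\not\subset E$ and $\supp(E_{a})=T$ (compared with $E_{a}\neq\{0\}$ in Theorem \ref{tko1}) enter precisely because $u_{n}\to0$ forces $\varphi(u_{n})\to0$, so that characteristic functions of unboundedly large norm are needed to compensate; a pleasant by-product is that no splitting into the cases $b_{\varphi}<\infty$ and $b_{\varphi}=\infty$ will be necessary.

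\emph{Reformulating the failure of $\Delta_{2}(0)$.} Since $\varphi\notin\Delta_{2}(0)$, we have $a_{\varphi}=0$, hence $\varphi(u)>0$ for all $u>0$. First I would verify that for every $n\in\mathbb{N}$, every $C\geq1$ and every $\varepsilon\in(0,b_{\varphi})$ there is $u\in(0,\varepsilon]$ with $\varphi\big((1+\tfrac{1}{n})u\big)>C\varphi(u)$: if this failed for some $n_{0},C_{0},\varepsilon_{0}$, then iterating the inequality $\varphi\big((1+\tfrac{1}{n_{0}})u\big)\leq C_{0}\varphi(u)$ a total of $n_{0}$ times and using $(1+\tfrac{1}{n_{0}})^{n_{0}}\geq2$ would give $\varphi(2u)\leq C_{0}^{\,n_{0}}\varphi(u)$ for all sufficiently small $u$, i.e.\ $\varphi\in\Delta_{2}(0)$. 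Applying this with $C=2^{n+2}C_{E}^{n+2}$ and $\varepsilon$ small enough that in addition $(1+\tfrac{1}{n})\varepsilon<b_{\varphi}$ and $\varepsilon\leq\tfrac{1}{n}$, I fix for each $n$ a number $u_{n}\in(0,\tfrac{1}{n}]$ with $0<\varphi(u_{n})<\infty$ and
\begin{equation*}
\varphi\Big(\big(1+\tfrac{1}{n}\big)u_{n}\Big)>2^{n+2}C_{E}^{n+2}\,\varphi(u_{n}),
\end{equation*}
which is \eqref{kl1}; note $u_{n}\to0$ and $\varphi(u_{n})\to0$.

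\emph{Disjoint sets of large norm.} Let $\|\cdot\|_{E,p}$ be a $p$-norm equivalent to $\|\cdot\|_{E}$, see \eqref{p-norma}. Since $L_{\infty}\not\subset E$, after normalization there is $g\in L_{\infty}\setminus E$ with $0\leq g\leq1$, and then $S:=\supp g$ satisfies $\chi_{S}\notin E$. I claim $\sup\{\|\chi_{A}\|_{E,p}:A\subseteq S,\ \chi_{A}\in E_{a}\}=\infty$. Indeed, otherwise $\supp(E_{a})=T$ lets us exhaust $S$ by a maximal disjoint family of subsets $B_{k}$ (countably many, by $\sigma$-finiteness) with $\mu(B_{k})>0$ and $\chi_{B_{k}}\in E_{a}$, so that $\bigcup_{k}B_{k}=S$ $\mu$-a.e.; the partial unions satisfy $\chi_{B_{1}\cup\dots\cup B_{N}}\in E_{a}$ with $\sup_{N}\|\chi_{B_{1}\cup\dots\cup B_{N}}\|_{E}<\infty$, so the Fatou property gives $\chi_{S}\in E$, a contradiction. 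Hence one may select sets $A^{(j)}\subseteq S$ with $\chi_{A^{(j)}}\in E_{a}$ and $\|\chi_{A^{(j)}}\|_{E,p}\to\infty$, and then peel them off --- using that $E_{a}$ is an order ideal and that $\|\chi_{A'\setminus D}\|_{E,p}^{p}\geq\|\chi_{A'}\|_{E,p}^{p}-\|\chi_{D}\|_{E,p}^{p}$ --- to obtain pairwise disjoint sets $A_{n}$ with $\chi_{A_{n}}\in E_{a}$ and
\begin{equation*}
\|\chi_{A_{n}}\|_{E,p}\geq\frac{1}{2^{n+2}C_{E}^{n+2}\,\varphi(u_{n})}.
\end{equation*}

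\emph{Darboux selection and conclusion.} On the measurable subsets of $A_{n}$ the set function $\nu(B)=\|\chi_{B}\|_{E,p}$ is a submeasure in the sense of \cite{Dobr}*{Definition 1} --- this is shown in the proof of Theorem 2.4 in \cite{kmp}, where order continuity on $A_{n}$ (that is, $\chi_{A_{n}}\in E_{a}$) is what is used --- so, $\mu$ being nonatomic, $\nu$ has the Darboux property by \cite{Dobr}*{Theorem 10}. Since $0\leq\tfrac{1}{2^{n+2}C_{E}^{n+2}}\leq\varphi(u_{n})\|\chi_{A_{n}}\|_{E,p}$, I can choose $B_{n}\subseteq A_{n}$ with $\varphi(u_{n})\|\chi_{B_{n}}\|_{E,p}=\tfrac{1}{2^{n+2}C_{E}^{n+2}}$, and \eqref{p-norma} then yields $\tfrac{1}{2^{n+2}C_{E}^{n+2}}\leq\varphi(u_{n})\|\chi_{B_{n}}\|_{E}\leq\tfrac{1}{2^{n+1}C_{E}^{n+1}}$, which is \eqref{kl2}. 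Setting $x_{n}=u_{n}\chi_{B_{n}}$ and $x=\sum_{n=1}^{\infty}u_{n}\chi_{B_{n}}$, the inequalities \eqref{kl1}--\eqref{kl2} are exactly the ones that drive the remainder of the proof of Theorem \ref{tko1}: repeating that argument verbatim gives $\rho_{\varphi}^{E}(x)\leq\tfrac12$ (whence $x\in E_{\varphi}$ by Lemma \ref{l3p}) and $\|x\|_{\varphi}=1$, produces the elements $y_{m}$ as in \eqref{yyy}--\eqref{yyyy} with $\rho_{\varphi}^{E}(y_{m})\leq\rho_{\varphi}^{E}(x)\leq\tfrac12$ and $\|y_{m}\|_{\varphi}=1$, and shows that $P:l_{\infty}\to E_{\varphi}$, $P(z)=\sum_{m=1}^{\infty}z_{m}y_{m}$, is a well defined order linear isometry. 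The hard part, I expect, is the claim in the previous paragraph: it is the only place where $L_{\infty}\not\subset E$ and $\supp(E_{a})=T$ are genuinely used, and the order of the construction is essential --- $u_{n}$ is forced to be small and must be chosen first, and only afterwards can one locate disjoint sets whose $\|\cdot\|_{E,p}$-norms are large enough to absorb the smallness of $\varphi(u_{n})$.
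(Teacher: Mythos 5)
Your argument for the case $a_{\varphi}=0$ is sound and is essentially the paper's route: the paper gets the disjoint sets $B_{n}$ by taking the exhaustion $A_{n}\uparrow T$ with $\chi_{A_{n}}\in E_{a}$ coming from $\supp(E_{a})=T$, using the Fatou property together with $\chi_{T}\notin E$ to force $\lVert\chi_{A_{n}}\rVert_{E}\to\infty$, and then repeating this on $T\setminus A_{n_{1}}$, $T\setminus(A_{n_{1}}\cup\dots)$, etc., before applying the Darboux property of $\nu(B)=\lVert\chi_{B}\rVert_{E,p}$; your maximal-disjoint-family argument plus the peeling inequality $\lVert\chi_{A'\setminus D}\rVert_{E,p}^{p}\geq\lVert\chi_{A'}\rVert_{E,p}^{p}-\lVert\chi_{D}\rVert_{E,p}^{p}$ achieves the same thing and is acceptable (one may even take $S=T$ directly, since $L_{\infty}\not\subset E$ is equivalent to $\chi_{T}\notin E$).

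However, there is a genuine gap at the very first line: the claim ``since $\varphi\notin\Delta_{2}(0)$, we have $a_{\varphi}=0$'' is false. The paper's definition of $\Delta_{2}(0)$ requires some $u_{0}$ with $\varphi(u_{0})>0$ and $\varphi(2u)\leq K\varphi(u)$ for all $0\leq u\leq u_{0}$; hence $\varphi\in\Delta_{2}(0)$ implies $a_{\varphi}=0$, but the converse implication is the one you need and it does not hold. Indeed, whenever $a_{\varphi}>0$ one automatically has $\varphi\notin\Delta_{2}(0)$ (take $u\in(a_{\varphi}/2,a_{\varphi})$, where $\varphi(u)=0<\varphi(2u)$), so the hypothesis of the theorem is satisfied by every $\varphi$ with $a_{\varphi}>0$, and your proof covers none of these functions. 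Worse, your reformulated statement ``for every $n$, $C$, $\varepsilon$ there is $u\in(0,\varepsilon]$ with $\varphi((1+\tfrac1n)u)>C\varphi(u)$'' is simply false when $a_{\varphi}>0$: for $u\leq\varepsilon<\tfrac{n}{n+1}a_{\varphi}$ both sides are $0$. This is exactly why the paper splits its proof into $a_{\varphi}=0$ and $a_{\varphi}>0$; in the latter case it takes $u_{n}=a_{\varphi}$ and only needs disjoint sets $B_{n}$ with $\varphi\bigl((1+\tfrac1n)a_{\varphi}\bigr)\lVert\chi_{B_{n}}\rVert_{E}>1$ (with the obvious modification when $b_{\varphi}\leq 2a_{\varphi}$), obtaining $\rho_{\varphi}^{E}(x)=\rho_{\varphi}^{E}(y_{m})=0$ while $\lVert x\rVert_{\varphi}=\lVert y_{m}\rVert_{\varphi}=1$. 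Your ``disjoint sets of large norm'' construction would supply such $B_{n}$ as well, so the repair is routine, but as written the promised absence of case-splitting is precisely where the proof breaks down.
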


\begin{proof} First assume that $a_{\varphi}=0$. Since $\varphi\notin\Delta_{2}\left(0\right)$, there exists a decreasing to zero sequence $(u_{n})_{n=1}^{\infty}$ such that $u_{1}<b_{\varphi}/2$ and 
\[\varphi\left(\left(1+\frac{1}{n}\right)u_{n}\right)>2^{n+2}C_{E}^{n+2}\varphi(u_{n})\]
for any $n\in\mathbb N$. Moreover, since $\supp(E_{a})=T$, there exists a sequence $(A_{n})_{n=1}^{\infty}$ of measurable subset of $T$ such that $\mu(A_{n})<\infty$, $A_{n}\subset A_{n+1}$ and $\chi_{A_{n}}\in E_{a}$ for any $n\in\mathbb N$ and $\bigcup_{n=1}^{\infty}A_{n}=T$ (see \cite{ma}*{p. 169--170}). Since $E$ has the Fatou property and $L_{\infty}\not\subset E$, we get $\lim_{n\rightarrow\infty}\|\chi_{A_{n}}\|_{E}=\infty$. So, there is $n_{1}$ such that 
\[\varphi(u_{1})\|\chi_{A_{n_{1}}}\|_{E,p}\geq\frac{1}{2^{3}C_{E}^{3}},\]
where $\|\cdot\|_{E,p}$ is a p-norm equivalent to $\|\cdot\|_{E}$ (see inequality \eqref{p-norma}). Let $\Sigma_{1}$ be the $\sigma$-algebra of all measurable subsets of $A_{n_{1}}$. Analogously as in the proof of Theorem \ref{tko1}, we conclude that $\nu\left(B\right)=\left\Vert\chi_{B}\right\Vert_{E,p}$ defined for all $B\in \Sigma _{1}$ is the submeasure on $\Sigma _{1}$ and $\nu$ has the Darboux property. Therefore, we can find a set $B_{1}\in \Sigma_{1},B_{1}\subset A_{n_{1}}$ such that
\[\varphi(u_{1})\|\chi_{B_{1}}\|_{E,p}=\frac{1}{2^{3}C_{E}^{3}}.\]
By inequality \eqref{p-norma}, we obtain
\[\frac{1}{2^{3}C_{E}^{3}}\leq\varphi(u_{1})\|\chi_{B_{1}}\|_{E}\leq\frac{1}{2^{2}C_{E}^{2}}.\]
Let $A_{n}^{2}=A_{n}\backslash A_{n_{1}}$ for $n>n_{1}$. Applying again the Fatou property, we get $\lim_{n\rightarrow\infty}\|\chi_{A_{n}^{2}}\|_{E}=\infty$. Thus, there is $n_{2}>n_{1}$ such that
\[\varphi(u_{2})\|\chi_{A_{n_{2}}^{2}}\|_{E,p}\geq\frac{1}{2^{4}C_{E}^{4}},\]
Proceeding analogously as above, we conclude that
\[\frac{1}{2^{4}C_{E}^{4}}=\varphi(u_{2})\|\chi_{B_{2}}\|_{E,p}\leq\varphi(u_{2})\|\chi_{B_{2}}\|_{E}\leq2C\varphi(u_{2})\|\chi_{B_{2}}\|_{E,p}=\frac{1}{2^{3}C_{E}^{3}}.\]
for some $B_{2}\subset A_{n_{2}}^{2}$. Proceeding in such a way by induction, we can find a sequence $(B_{n})_{n=1}^{\infty}$ of pairwise disjoint measurable sets such that 
\[\frac{1}{2^{n+2}C_{E}^{n+2}}\leq\varphi(u_{n})\|\chi_{B_{n}}\|_{E}\leq\frac{1}{2^{n+1}C_{E}^{n+1}}\]
for any $n\in\mathbb N$. Defining $x_{n}=u_{n}\chi_{B_{n}}$ for $n\in\mathbb N$, $x=\sum_{n=1}^{\infty}u_{n}\chi_{B_{n}}$ and the sequence $(y_{m})_{m=1}^{\infty}$ just like in the proof of Theorem \ref{tko1} (see formulas \eqref{yyy} and \eqref{yyyy}), we get $\rho_{\varphi}^{E}(x)\leq\frac{1}{2}$, $\|x\|_{\varphi}=1$, $\rho_{\varphi}^{E}(y_{m})\leq2^{-m}$ and $\|y_{m}\|_{\varphi}=1$ for $m\in\mathbb N$. In consequence, the operator $P:l_{\infty }\rightarrow E_{\varphi }$, where $P\left(z\right)=\sum_{m=1}^{\infty}z_{m}y_{m}$ for $z=\left(z_{m}\right)\in l_{\infty }$, is an order linear isometry.

Finally, suppose $a_{\varphi}>0$. Let $u_{n}=a_{\varphi}$ for any $n\in\mathbb N$. Proceeding analogously as in first part of this proof, we can find a sequence $(B_{n})_{n=1}^{\infty}$ of pairwise disjoint measurable sets such that
\begin{equation}\label{jjr}1<\varphi\left(\left(1+\frac{1}{n}\right)u_{n}\right)\|\chi_{B_{n}}\|_{E}
\end{equation}
for any $n\in\mathbb N$ (if $b_{\varphi}\leq2a_{\varphi}$, then there exists $n_{0}\in\mathbb N$ such that $(1+1/n_{0})a_{\varphi}<b_{\varphi}$ and in inequality \eqref{jjr} we substitute $(1+1/n_{0})$ in place of $(1+1/n)$ for $n\leq n_{0}$). Defining $x$ and $y_{m}$, $m\in\mathbb N$, as above, we get $\rho_{\varphi}^{E}(x)=\rho_{\varphi}^{E}(y_{m})=0$, $m\in\mathbb N$, and $\|x\|_{\varphi}=\|y_{m}\|_{\varphi}=1$, $m\in\mathbb N$, whence we obtain that the operator $P$ defined as above is an order linear isometry.
\end{proof}

Theorems \ref{tko1} and \ref{tko2}  are generalizations Theorems 1 and 2 from \cite{hkm} (normed case of $E_{\varphi}$) as well as Proposition 1.11 from \cite{kz} (quasi-normed case of $E_{\varphi}$ for $E=L_{1}$ and strictly increasing $\varphi$). From Theorems \ref{tko1} and \ref{tko2}, we get the following

\begin{corollary}\label{c1} $(i)$ Let $\mu$ be nonatomic, $L_{\infty}\subset E$ and $E_{a}\neq\{0\}$. If $\varphi\notin\Delta_{2}^{E}$, then the space $E_{\varphi}$ contains an order linearly isometric copy of $l_{\infty}$.

$(ii)$ Let $\mu$ be nonatomic. Assume that neither $L_{\infty}\subset E$ nor $E\subset L_{\infty}$ and $\supp(E_{a})=T$. If $\varphi\notin\Delta_{2}^{E}$, then the space $E_{\varphi}$ contains an order linearly isometric copy of $l_{\infty}$.
\end{corollary}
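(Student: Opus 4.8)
The plan is to obtain both statements directly from Theorems \ref{tko1} and \ref{tko2}, once the condition $\Delta_2^E$ is unwound in each of the two relevant situations.

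Part $(i)$ is immediate: since $L_\infty\subset E$, the hypothesis $\varphi\notin\Delta_2^E$ means precisely $\varphi\notin\Delta_2(\infty)$, and as $\mu$ is nonatomic and $E_a\neq\{0\}$ all hypotheses of Theorem \ref{tko1} are in force, so $E_\varphi$ contains an order linearly isometric copy of $l_\infty$.

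For part $(ii)$ we are in the class where neither $L_\infty\subset E$ nor $E\subset L_\infty$, so $\varphi\notin\Delta_2^E$ reads $\varphi\notin\Delta_2(\mathbb R_+)$. The one observation I would isolate is the elementary fact that $\varphi\in\Delta_2(0)$ together with $\varphi\in\Delta_2(\infty)$ implies $\varphi\in\Delta_2(\mathbb R_+)$: the first gives $a_\varphi=0$ and \eqref{roo1} on $[0,u_0]$, the second gives $b_\varphi=\infty$ and \eqref{roo1} on $[u_1,\infty)$ (after enlarging $u_1$ one may assume $u_1\ge u_0$), and on the compact interval $[u_0,u_1]$ the quotient $\varphi(2u)/\varphi(u)$ is continuous and finite-valued, hence bounded, because $\varphi$ is continuous on $(0,b_\varphi)=(0,\infty)$ and $\varphi(u)>0$ for every $u>a_\varphi=0$; the maximum of the three constants then witnesses \eqref{roo1} for all $u\in\mathbb R_+$. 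Taking the contrapositive, $\varphi\notin\Delta_2(\mathbb R_+)$ forces $\varphi\notin\Delta_2(0)$ or $\varphi\notin\Delta_2(\infty)$. In the first alternative I apply Theorem \ref{tko2} (its hypotheses $L_\infty\not\subset E$, $\supp(E_a)=T$ and $\mu$ nonatomic are exactly the present assumptions), and in the second I apply Theorem \ref{tko1} (using that $\supp(E_a)=T$ in particular gives $E_a\neq\{0\}$); in either case $E_\varphi$ contains an order linearly isometric copy of $l_\infty$.

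The only point that needs any care is the interpolation step on $[u_0,u_1]$, i.e.\ the standard equivalence between $\Delta_2$ holding at both ends of $\mathbb R_+$ and $\Delta_2$ holding on the whole of $\mathbb R_+$ for a finitely valued Orlicz function that is positive away from $0$; everything else is a verbatim appeal to the two preceding theorems.
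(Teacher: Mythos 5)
Your proposal is correct and coincides with the paper's intended argument: the paper offers no separate proof, stating the corollary as an immediate consequence of Theorems \ref{tko1} and \ref{tko2}, which is exactly your reduction (part $(i)$ verbatim from Theorem \ref{tko1}, part $(ii)$ by splitting $\varphi\notin\Delta_{2}(\mathbb{R}_{+})$ into $\varphi\notin\Delta_{2}(0)$ or $\varphi\notin\Delta_{2}(\infty)$ and invoking Theorem \ref{tko2} or \ref{tko1}, respectively). The one step you add explicitly, that $\Delta_{2}(0)$ together with $\Delta_{2}(\infty)$ yields $\Delta_{2}(\mathbb{R}_{+})$ because $a_{\varphi}=0$, $b_{\varphi}=\infty$ and $\varphi(2u)/\varphi(u)$ is bounded on the intermediate compact interval, is correctly justified.
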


\begin{remark}{\rm
Let $\mu$ be nonatomic and $E\subset L_{\infty}$. By Remark \ref{r4-0}, we have $E_{a}=\{0\}$. However, in some cases, proceeding analogously as in the proof of Theorem \ref{tko2} one can show that if $\varphi\notin\Delta_{2}^{E}$ ($\varphi\notin\Delta_{2}(0)$), then the space $E_{\varphi}$ contains an order linearly isometric copy of $l_{\infty}$ (see example below).
}\end{remark}

\begin{example}\label{olo}{\rm
Let $T=[0,\infty)$, $\mu$ be Lebesgue measure, $\psi(u)=u^{2}$ for $u\in[0,1]$ and $\psi(u)=\infty$ for $u>1$. Then the Orlicz space
\[L_{\psi}=\left\{x\in L^{0}\colon\int_{0}^{\infty}\psi(\lambda|x(t)|)dt<\infty\text{ for some }\lambda>0\right\}\]
is contained in $L_{\infty}$. Obviously, for the both norms, that is, for the Luxemburg-Nakano norm $\|\cdot\|_{\psi}$ as well as for the Orlicz-Amemiya norm 
\[\|x\|_{\psi}^{O}:=\inf_{k>0}\frac{1}{k}\left\{1+\int_{0}^{\infty}\psi(k|x(t)|)dt\right\}\]
(see \cite{hm}), we have $(L_{\psi})_{a}=\{0\}$. Simultaneously, for any measurable set $A\subset[0,\infty)$ we get $\|\chi_{A}\|_{\psi}=\max(1,\sqrt{\mu(A)})$ and $\|\chi_{A}\|_{\psi}^{O}=1+\mu(A)$ if $\mu(A)\leq1$ and $\|\chi_{A}\|_{\psi}^{O}=2\sqrt{\mu(A)}$ otherwise. So, for $(E,\|\cdot\|_{E})=(L_{\psi},\|\cdot\|_{\psi})$ or $(E,\|\cdot\|_{E})=(L_{\psi},\|\cdot\|_{\psi}^{O})$, proceeding analogously as in the proof of Theorem \ref{tko2}, we obtain that $E_{\varphi}$ contains an order linearly isometric copy of $l_{\infty}$ whenever $\varphi\notin\Delta_{2}(0)$.
}\end{example}

\begin{theorem}\label{toc} Let $\mu$ be nonatomic. Then $E_{\varphi}$ is order continuous if and only if $E$ is order continuous and $\varphi$ satisfies the condition $\Delta_{2}^{E}$.
\end{theorem}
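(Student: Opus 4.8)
The plan is to prove the two implications separately. Throughout I will use that $\|x_n\|_\varphi\to 0$ is equivalent to $\rho_\varphi^E(\lambda x_n)=\|\varphi(\lambda|x_n|)\|_E\to 0$ for every $\lambda>0$ (Lemma~\ref{le33}), that $E_\varphi$ is a quasi-Banach ideal space (Lemma~\ref{f_x}$(iii)$), and the a.e.-convergence characterization of order continuity recorded in the Preliminaries.

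\emph{Sufficiency.} Assume $E$ is order continuous and $\varphi\in\Delta_2^E$. Take $x\in E_\varphi$ and $0\le x_n\le|x|$ with $x_n\to0$ $\mu$-a.e. By right-continuity of $\varphi$ at $0$, $\varphi(\lambda|x_n|)\to0$ $\mu$-a.e.\ for each $\lambda>0$, and $0\le\varphi(\lambda|x_n|)\le\varphi(\lambda|x|)$. The crucial point is that $\Delta_2^E$ upgrades ``$\varphi(\lambda_0|x|)\in E$ for some $\lambda_0>0$'' (which is what $x\in E_\varphi$ gives) to ``$\varphi(\lambda|x|)\in E$ for every $\lambda>0$'': in class $(1)$ by iterating $\varphi(2u)\le K\varphi(u)$; in class $(2)$ by splitting $T$ at a level $u_0$ with $\varphi(u_0)<\infty$, iterating $\Delta_2(\infty)$ where $\lambda_0|x|\ge u_0$ and bounding $\varphi(\lambda|x|)$ by the constant $\varphi(\lambda u_0/\lambda_0)\chi_T\in L_\infty\subset E$ elsewhere; class $(3)$ is vacuous here since $E\subset L_\infty$ on a nonatomic $\mu$ is never order continuous. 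Given $\varphi(\lambda|x|)\in E$, order continuity of $E$ forces $\|\varphi(\lambda|x_n|)\|_E\to0$, hence $\|x_n\|_\varphi\to0$ by Lemma~\ref{le33}.

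\emph{Necessity.} I argue the contrapositive in two cases. \emph{Case 1: $E$ is not order continuous.} If $b_\varphi<\infty$, then $\varphi(\lambda_0|x|)\in E\subset L^0$ forces $\lambda_0|x|\le b_\varphi$ $\mu$-a.e., so $E_\varphi\subset L_\infty$; by the closed graph theorem this embedding is continuous, so $a_{E_\varphi}>0$, and splitting a strictly positive element of $E_\varphi$ over pairwise disjoint sets of positive measure (nonatomicity) shows $E_\varphi\notin(\mathrm{OC})$. If $b_\varphi=\infty$, choose (after passing to a subsequence) $x\in E_+$ with $\|x\|_E=1$ and pairwise disjoint $(x_n)$ with $0\le x_n\le x$, $\inf_n\|x_n\|_E>0$; put $v(t)=\varphi^{-1}(x(t))$ and $v_n=v\chi_{\supp x_n}$. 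Since $b_\varphi=\infty$ we have $\varphi(\varphi^{-1}(u))=u$ (Lemma~\ref{skladanie}), so $\rho_\varphi^E(v)=\|x\|_E=1$ and $v\in E_\varphi$; moreover $0\le v_n\le v$, $v_n\to0$ $\mu$-a.e.\ by disjointness of the $x_n$, and $\rho_\varphi^E(v_n)=\|x\chi_{\supp x_n}\|_E\ge\|x_n\|_E\not\to0$, so $\|v_n\|_\varphi\not\to0$ again by Lemma~\ref{le33}. In both subcases $E_\varphi\notin(\mathrm{OC})$. \emph{Case 2: $E$ is order continuous but $\varphi\notin\Delta_2^E$.} On a nonatomic $\mu$, $E\subset L_\infty$ would contradict order continuity of $E$ (same splitting, using $a_E>0$ from Remark~\ref{r4-0}); so $E$ lies in class $(1)$ or $(2)$, and $E_a=E$ gives $E_a\neq\{0\}$ and $\supp E_a=T$. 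Hence Corollary~\ref{c1} applies and, through the constructions in the proofs of Theorems~\ref{tko1} and \ref{tko2}, yields pairwise disjoint $y_m\in E_\varphi$ with $\|y_m\|_\varphi=1$ and an order linear isometry $P\colon l_\infty\to E_\varphi$, $P(z)=\sum_m z_m y_m$. Then $w:=P((1,1,\dots))\in E_\varphi$ and $w_n:=\sum_{m>n}y_m=P((\underbrace{0,\dots,0}_{n},1,1,\dots))$ satisfy $0\le w_n\le w$, $w_n\downarrow0$ (the tails of a disjoint family have empty intersection of supports) and $\|w_n\|_\varphi=1\not\to0$, so $E_\varphi\notin(\mathrm{OC})$. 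This also disposes of class $(3)$: there $E$ (being $\subset L_\infty$ on a nonatomic space) is never order continuous and $E_\varphi\subset L_\infty$ is never order continuous, so both sides of the equivalence fail.

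I expect the main obstacle to be the bookkeeping in the necessity part: isolating precisely when the $\varphi^{-1}$-transfer is legitimate (namely $b_\varphi=\infty$, where $\varphi\circ\varphi^{-1}=\mathrm{id}$ by Lemma~\ref{skladanie}), disposing of every other shape of $\varphi$ and of the whole class $E\subset L_\infty$ by the soft observation that a Calder\'on--Lozanovski\u\i{} space contained in $L_\infty$ over a nonatomic measure is never order continuous, and verifying that the three definitions of $\Delta_2^E$ line up with the three classes of $E$ so that the hypotheses of Corollary~\ref{c1} (and Theorems~\ref{tko1}--\ref{tko2}) are genuinely available at the moment they are invoked.
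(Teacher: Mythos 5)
Your proof is correct and follows essentially the same route as the paper: sufficiency by dominating $\varphi(\lambda|x|)$ through iteration of $\Delta_2^E$ and invoking order continuity of $E$ together with Lemma \ref{le33}, and necessity by disposing of $b_\varphi<\infty$, transferring non--order continuity of $E$ to $E_\varphi$ via $\varphi^{-1}$ (legitimate since $b_\varphi=\infty$ gives $\varphi\circ\varphi^{-1}=\mathrm{id}$), and deducing $\varphi\in\Delta_2^E$ from Corollary \ref{c1} because an order copy of $l_\infty$ excludes order continuity. The only differences are implementation details that if anything improve on the paper: your disjointly supported truncations $v\chi_{\supp x_n}$ guarantee $v_n\to 0$ $\mu$-a.e.\ even when $a_\varphi>0$ (where the paper's $y_n=\varphi^{-1}(x_n)$ satisfy $y_n\geq a_\varphi$ and so need not tend to $0$), and for $b_\varphi<\infty$ you use the closed graph theorem and $a_{E_\varphi}>0$ where the paper gets the same conclusion more directly from $\|\chi_A\|_\varphi\geq 1/b_\varphi$.
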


\begin{proof} {\em Sufficiency.} Since $E$ is order continuous, we have $L_{\infty}\subsetneq E$ or (neither $L_{\infty}\subset E$ nor $E\subset L_{\infty}$). Take any $x\in E_{\varphi}$ and a sequence $(x_{n})$ in $(E_{\varphi})_{+}$ such that $0\leftarrow x_{n}\leq|x|$. By the definition of $E_{\varphi}$, there exists $\lambda_{x}>0$ such that $\varphi(\lambda_{x}|x|)\in E$. So, for any fixed $\lambda>0$ we can find $i=i(\lambda)\in\mathbb N$ such that $\lambda\leq2^{i}\lambda_{x}$. In consequence, we get
\[\varphi(\lambda|x|)\leq\varphi(2^{i}\lambda_{x}|x|)\leq K^{i}\varphi(\lambda_{x}|x|)+\varphi(2^{i}\lambda_{x}u_{0})\chi_{T}\in E,\]
whenever $L_{\infty}\subsetneq E$, where $K$ and $u_{0}$ are the constants from the condition $\Delta_{2}^{E}$ $(\Delta_{2}(\infty))$, and
\[\varphi(\lambda|x|)\leq\varphi(2^{i}\lambda_{x}|x|)\leq K^{i}\varphi(\lambda_{x}|x|)\in E,\]
otherwise. Therefore, for any $\lambda>0$, we have $0\leftarrow\varphi(\lambda x_{n})\leq\varphi(\lambda|x|)\in E$. By order continuity of $E$, we obtain $\lim_{n\rightarrow\infty}\rho_{\varphi}^{E}(\lambda x_{n})=\lim_{n\rightarrow\infty}\|\varphi(\lambda x_{n})\|_{E}=0$ for the same $\lambda$. Finally, by Lemma \ref{le33}, we get $\lim_{n\rightarrow\infty}\|x_{n}\|_{\varphi}=0$.

{\em Necessity.} We have $b_{\varphi}=\infty$. Indeed, if $b_{\varphi}<\infty$, then for any $A\in\Sigma$, $\chi_{A}\in E$, we get $\|\chi_{A}\|_{\varphi}\geq1/b_{\varphi}$ and, in consequence, $E_{\varphi}$ is not order continuous.

Assume now that $E$ is not order continuous. Then there exist $x\in E$ and a sequence $(x_{n})$ in $E_{+}$ such that $0\leftarrow x_{n}\leq|x|$ for any $n\in\mathbb N$ and $\|x_{n}\|_{E}\nrightarrow0$. Without loss of generality, we can assume that $\|x_{n}\|_{E}>1$ for any $n\in\mathbb N$. Defining $y=\varphi^{-1}(|x|)$ and $y_{n}=\varphi^{-1}(x_{n})$ for any $n\in\mathbb N$, we have $0\leftarrow y_{n}\leq y$ and $\rho_{\varphi}^{E}(y_{n})=\|\varphi(y_{n})\|_{E}=\|x_{n}\|_{E}>1$ for the same $n$. By the definition of $\|\cdot\|_{\varphi}$, we get $\|y_{n}\|_{\varphi}>1$ for any $n\in\mathbb N$ and, in consequence,  $E_{\varphi}$ is not order continuous again.

At the end, let $E$ be order continuous. Then $L_{\infty}\subsetneq E$ or (neither $L_{\infty}\subset E$ nor $E\subset L_{\infty}$), whence by Corollary \ref{c1}, $\varphi$ satisfies the condition $\Delta_{2}^{E}$.
\end{proof}

By Theorem \ref{toc} and Proposition 2.2 from \cite{lee} we get immediately the following

\begin{corollary}\label{cko1} Let $\mu$ be nonatomic. Then $E_{\varphi}$ contains an order isomorphic copy of $l_{\infty}$ if and only if $E$ contains an order isomorphic copy of $l_{\infty}$ or $\varphi\notin\Delta_{2}^{E}$.\end{corollary}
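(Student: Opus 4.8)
The plan is to derive the corollary purely formally from two ingredients: the characterization of order continuity of $E_{\varphi}$ in Theorem \ref{toc}, and the dichotomy, provided by \cite{lee}*{Proposition 2.2}, according to which a quasi-Banach ideal space with the Fatou property contains an order isomorphic copy of $l_{\infty}$ precisely when it fails to be order continuous. First I would verify that both spaces entering the statement are covered by that dichotomy. The base space $E$ is, by our standing assumption, a quasi-Banach ideal space with the Fatou property. The space $E_{\varphi}$ is a quasi-normed ideal space which, by Lemma \ref{f_x}$(iii)$, also has the Fatou property and is complete (via Lemma \ref{l4-2}), hence is a quasi-Banach ideal space. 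Consequently, \cite{lee}*{Proposition 2.2} applies to each of $X\in\{E,E_{\varphi}\}$: the space $X$ contains an order isomorphic copy of $l_{\infty}$ if and only if $X$ is not order continuous.

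Next I would invoke Theorem \ref{toc}. Since $\mu$ is nonatomic, $E_{\varphi}$ is order continuous if and only if $E$ is order continuous and $\varphi\in\Delta_{2}^{E}$. Taking the negation, $E_{\varphi}$ fails to be order continuous if and only if $E$ fails to be order continuous or $\varphi\notin\Delta_{2}^{E}$.

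Finally I would chain the equivalences: $E_{\varphi}$ contains an order isomorphic copy of $l_{\infty}$ $\iff$ $E_{\varphi}$ is not order continuous $\iff$ $E$ is not order continuous or $\varphi\notin\Delta_{2}^{E}$ $\iff$ $E$ contains an order isomorphic copy of $l_{\infty}$ or $\varphi\notin\Delta_{2}^{E}$, which is exactly the claimed statement. The only point requiring a moment of care — the main, though minor, obstacle — is checking that the hypotheses of \cite{lee}*{Proposition 2.2} are genuinely met by $E_{\varphi}$, i.e.\ that $E_{\varphi}$ is a bona fide quasi-Banach ideal space rather than merely a quasi-normed one; this is precisely what Lemma \ref{f_x}$(iii)$ and Lemma \ref{l4-2} supply. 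Everything else is a formal manipulation of the two cited equivalences, so no further computation is needed.
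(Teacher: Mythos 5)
Your argument is exactly the paper's: the authors obtain the corollary immediately by combining Theorem \ref{toc} with Proposition 2.2 from \cite{lee}, i.e.\ the same dichotomy (order isomorphic copy of $l_{\infty}$ versus order continuity) applied to $E$ and to $E_{\varphi}$, the latter being a quasi-Banach ideal space with the Fatou property by Lemma \ref{f_x}$(iii)$. Your write-up just makes these applicability checks explicit, so it is correct and follows the same route.
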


\begin{theorem}\label{tko3}
Suppose $E$ is a quasi-Banach ideal space over the counting measure space $\left(\mathbb{N},2^{\mathbb{N}},m\right) $ such that $l_{\infty
}\not\subset E$.

$\left(i\right)$ If $a_{\varphi}>0$, then $E_{\varphi}$ contains an order linearly isometric copy of $l_{\infty}$.

$\left( ii\right) $ Assume that $a_{\varphi}=0$ and there exist a number $d>0$ and an increasing sequence of natural numbers $\left(i_{j}\right)_{j=1}^{\infty }$ such that $\left\Vert e\left(i_{j}\right)\right\Vert_{E}\leq d$ for any $j\in \mathbb{N}$ and $\sum\limits_{j=1}^{\infty}e\left(i_{j}\right)\notin E$, where $e\left(i\right)$ is the $i$-th unit vector. If $\varphi \notin \Delta _{2}\left( 0\right) ,$ then $E_{\varphi}$
contains an order linearly isometric copy of $l_{\infty }.$
\end{theorem}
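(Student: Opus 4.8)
The plan is to imitate the constructions used for Theorems~\ref{tko1} and~\ref{tko2}, the essential new feature being that over the counting measure one cannot split a set into pieces of prescribed norm --- there is no Darboux/submeasure tool available --- so the building blocks $B_{n}$ must be assembled atom by atom out of unit vectors. In both parts I would produce pairwise disjoint finite sets $B_{n}\subset\mathbb{N}$ and scalars $u_{n}>0$ so that, setting $x_{n}=u_{n}\chi_{B_{n}}$ and $x=\sum_{n}u_{n}\chi_{B_{n}}$, we have $\rho_{\varphi}^{E}(x)\leq\tfrac12$ (in part~$(i)$ even $\rho_{\varphi}^{E}(x)=0$), $\Vert x\Vert_{\varphi}=1$ and $\rho_{\varphi}^{E}(\lambda x)>1$ for every $\lambda>1$. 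Then I would split $\{x_{n}\}$ into the blocks $y_{m}$ exactly as in \eqref{yyy}--\eqref{yyyy}, check $\rho_{\varphi}^{E}(y_{m})\leq 2^{-m}$ (resp.\ $=0$) and $\Vert y_{m}\Vert_{\varphi}=1$, and define $P\colon l_{\infty}\to E_{\varphi}$ by $P(z)=\sum_{m}z_{m}y_{m}$. The verification that $P$ is a positivity preserving linear isometry is then word for word the last two paragraphs of the proof of Theorem~\ref{tko1}, using only that the $y_{m}$ have pairwise disjoint supports, that $\varphi$ is nondecreasing, and that $\rho_{\varphi}^{E}(x)\leq 1$.

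For part~$(i)$ I take $u_{n}=a_{\varphi}$, so $\varphi(u_{n})=0$ and $\rho_{\varphi}^{E}(x)=0$ automatically. Since $l_{\infty}\not\subset E$ we have $\chi_{\mathbb{N}}\notin E$, and the Fatou property forces $\sup\{\Vert\chi_{F}\Vert_{E}\colon F\subset\mathbb{N}\text{ finite}\}=\infty$; moreover this stays true after deleting any finite set of atoms (otherwise $\chi_{\mathbb{N}}$ would lie in $E$). Hence I can choose, inductively and disjointly, finite sets $B_{n}$ with $\varphi\big((1+\tfrac1n)a_{\varphi}\big)\Vert\chi_{B_{n}}\Vert_{E}>1$; note $\varphi((1+\tfrac1n)a_{\varphi})>0$ because $a_{\varphi}>0$ and $a_{\varphi}=\sup\{u\colon\varphi(u)=0\}$ (if this value is $+\infty$, any single atom does). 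Then $\rho_{\varphi}^{E}(x)=0\leq1$ gives $\Vert x\Vert_{\varphi}\leq1$ by Lemma~\ref{le32}$(i)$, while for $\lambda>1$, picking $n$ with $\lambda>1+\tfrac1n$ yields $\rho_{\varphi}^{E}(\lambda x)\geq\varphi((1+\tfrac1n)a_{\varphi})\Vert\chi_{B_{n}}\Vert_{E}>1$, so $\Vert x\Vert_{\varphi}=1$; the rest is the endgame described above.

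For part~$(ii)$ I would start from a sequence $(u_{n})$ decreasing to $0$ with $u_{1}<b_{\varphi}/2$ and $\varphi\big((1+\tfrac1n)u_{n}\big)>2^{n+2}C_{E}^{n+2}\varphi(u_{n})$, i.e.\ \eqref{kl1}, which exists because $\varphi\notin\Delta_{2}(0)$; since $a_{\varphi}=0$ and $\varphi$ is right continuous and vanishing at $0$ we have $\varphi(u_{n})\to0$, so after passing to a subsequence and reindexing (which only strengthens \eqref{kl1}, as $1+\tfrac1k\geq1+\tfrac1{n_{k}}$ when $n_{k}\geq k$) I may also assume $d\,\varphi(u_{n})\leq 2^{-(n+2)}C_{E}^{-(n+2)}$. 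Now I build $B_{n}$ as a finite subset of $\{i_{j}\}$ greedily: adjoin unused unit vectors $e(i_{j})$ one at a time until $\varphi(u_{n})\Vert\chi_{B_{n}}\Vert_{E}$ first reaches $2^{-(n+2)}C_{E}^{-(n+2)}$. The process terminates because, having used only finitely many of the $i_{j}$ so far, the remaining ones still satisfy $\sum_{j}e(i_{j})\notin E$, hence $\Vert\chi_{G}\Vert_{E}$ is unbounded over finite subsets $G$ of the reservoir. It does not overshoot: writing $B_{n}^{-}$ for the set just before the last atom was adjoined, we have $\varphi(u_{n})\Vert\chi_{B_{n}^{-}}\Vert_{E}<2^{-(n+2)}C_{E}^{-(n+2)}$, so by the quasi-triangle inequality for $E$ together with $\Vert e(i_{j})\Vert_{E}\leq d$,
\[
\varphi(u_{n})\Vert\chi_{B_{n}}\Vert_{E}\leq C_{E}\big(\varphi(u_{n})\Vert\chi_{B_{n}^{-}}\Vert_{E}+d\,\varphi(u_{n})\big)\leq C_{E}\Big(\tfrac{1}{2^{n+2}C_{E}^{n+2}}+\tfrac{1}{2^{n+2}C_{E}^{n+2}}\Big)=\tfrac{1}{2^{n+1}C_{E}^{n+1}}.
\]
Thus \eqref{kl2} holds, whence $\rho_{\varphi}^{E}(x)\leq\tfrac12$, and, using \eqref{kl1}, $\rho_{\varphi}^{E}(\lambda x)>1$ for every $\lambda>1$, so $\Vert x\Vert_{\varphi}=1$ by Lemma~\ref{le32}; the endgame is again as above.

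The step I expect to be the main obstacle is precisely this greedy assembly of the $B_{n}$ in part~$(ii)$: one has to see that the hypotheses $\Vert e(i_{j})\Vert_{E}\leq d$ and $\sum_{j}e(i_{j})\notin E$ are exactly what keep the single-atom increments small enough not to jump over the window $\big[\,2^{-(n+2)}C_{E}^{-(n+2)},\,2^{-(n+1)}C_{E}^{-(n+1)}\,\big]$ while still allowing it to be filled, which is what forces the preliminary passage to a subsequence of $(u_{n})$. Everything else is routine bookkeeping modeled on Theorems~\ref{tko1} and~\ref{tko2}.
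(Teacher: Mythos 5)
Your proposal is correct and follows essentially the same route as the paper: both parts build pairwise disjoint finite blocks from the unit vectors by a greedy ``smallest index'' stopping rule, use the Fatou property plus $l_{\infty}\not\subset E$ (resp.\ $\sum_{j}e(i_{j})\notin E$) for termination, control the overshoot via the last adjoined atom through $\Vert e(i_{j})\Vert_{E}\leq d$ and the smallness of $d\,\varphi(u_{n})$, and then transplant the $y_{m}$/$P$ endgame of Theorem~\ref{tko1}. The only (harmless) deviations are bookkeeping: in $(ii)$ you threshold $\varphi(u_{n})\Vert\chi_{B_{n}}\Vert_{E}$ at $2^{-(n+2)}C_{E}^{-(n+2)}$ and deduce the blow-up from \eqref{kl1}, whereas the paper thresholds $\varphi\left(\left(1+\tfrac{1}{n}\right)u_{n}\right)\Vert\cdot\Vert_{E}$ at $1$ and deduces the modular bound from \eqref{d2}, and in $(i)$ you absorb the case $\left(1+\tfrac{1}{n}\right)a_{\varphi}\geq b_{\varphi}$ via the infinite value of $\varphi$ instead of restricting to $n\geq n_{1}$ as the paper does.
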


\begin{remark}{\rm 
Note that if $l_{\infty}\not\subset E$ and $\sup_{i\in\mathbb N}\left\Vert e\left(i\right)\right\Vert_{E}<\infty$, then E satisfies the assumption of the statement $(ii)$ of Theorem \ref{tko3}. Therefore, the assumption of the statement $(ii)$ of Theorem \ref{tko3} is satisfied by all symmetric sequence spaces $E$ with $E\neq l_{\infty }$ as well as by Ces\`{a}ro sequence space $ces_{p}$, $1<p<\infty$ (see \cite{kkm} for the respective definition). On the other hand, this assumption is satisfied also by the weighted sequence space $l_{p}\left(w\right)$, $p\in(0,1]$, where $w\left(2i-1\right)=i$ and $w\left(2i\right)=\frac{1}{i}$ although $\sup_{i\in\mathbb N}\left\Vert e\left(i\right)\right\Vert_{E}=\infty$.}\end{remark}

\begin{proof}
$\left(i\right)$ Suppose $2a_{\varphi}<b_{\varphi}$. Since $l_{\infty}\not\subset E$ and $E$ has the Fatou property, we conclude that
\begin{equation*}
\lim_{n\rightarrow\infty}\left\Vert\sum\limits_{i=k}^{n}e\left(i\right)\right\Vert_{E}=\infty
\end{equation*}
for each $k\in\mathbb{N}$. Consequently, by induction, we can find an increasing sequence of natural numbers $\left(k_{n}\right)_{n=1}^{\infty}$ such that 
\begin{equation}
\varphi\left(\left(1+\frac{1}{n}\right)a_{\varphi}\right)\left\Vert\sum\limits_{i=k_{n-1}+1}^{k_{n}}e\left(i\right)\right\Vert_{E}>1,
\label{pow_1}
\end{equation}
for each $n\in \mathbb{N}$, where $k_{0}=0$. If $2a_{\varphi }\geq b_{\varphi },$ then we find a number $n_{1}$ with $\left(1+\frac{1}{n}\right)a_{\varphi}<b_{\varphi}$ for all $n\geq n_{1}$ (recall that $a_{\varphi}<b_{\varphi}$) and we can work above with the sequence of numbers $n\geq n_{1}$. Defining
\begin{equation*}
x_{n}=\sum\limits_{i=k_{n-1}+1}^{k_{n}}a_{\varphi}\cdot e\left(i\right)\text{ and }x=\sum\limits_{n=1}^{\infty}x_{n}
\end{equation*}
we have $\rho_{\varphi}^{E}(x_{n})=0$ for each $n\in\mathbb{N}$ and $\rho_{\varphi }^{E}(x)=0$. Furthermore, for each $\lambda>1$ there is a number $n_{\lambda}\in\mathbb{N}$ such that $\lambda>1+\frac{1}{n_{\lambda}}$, whence by (\ref{pow_1}),
\begin{equation*}
\rho_{\varphi }^{E}(\lambda x)\geq\left\Vert\varphi\left(\left(1+\frac{1}{n_{\lambda}}\right)x\right)\right\Vert_{E}\geq\left\Vert\varphi
\left(\left(1+\frac{1}{n_{\lambda}}\right)x_{n_{\lambda}}\right)\right\Vert_{E}>1.
\end{equation*}
Thus $\left\Vert x\right\Vert_{\varphi}=1$. Defining the sequence $\left(y_{m}\right)_{m=1}^{\infty}$ like in the proof of Theorem \ref{tko1}
(see formulas \eqref{yyy} and \eqref{yyyy}), we obtain that $\rho_{\varphi}^{E}(y_{m})=0$ and $\left\Vert y_{m}\right\Vert_{\varphi }=1$ for each $m\in \mathbb{N}$. In consequence, the operator $P:l_{\infty }\rightarrow E_{\varphi}$, defining by $P(z)=\sum\limits_{m=1}^{\infty}z_{m}y_{m}$ for $z=\left(z_{m}\right)\in l_{\infty}$, is an order linear isometry.

$\left(ii\right)$ Since $\varphi\notin\Delta_{2}\left(0\right)$, we find  a decreasing to zero sequence 
$\left( u_{n}\right) _{n=1}^{\infty }$ such that $u_{1}<\frac{1}{2}b_{\varphi }$ and
\begin{equation}
\varphi\left(u_{n}\right)\leq\frac{1}{dC_{E}^{n+2}2^{n+2}}\hspace{2mm}\text{ and }\hspace{2mm}\varphi\left(\left(1+\frac{1}{n}\right)u_{n}\right)\geq C_{E}^{n+2}2^{n+2}\varphi\left(u_{n}\right).\label{d2}
\end{equation}
Moreover, applying the Fatou property of $E$ and the assumption of the statement (ii) we conclude that
\begin{equation*}
\lim_{n\rightarrow\infty}\left\Vert \sum\limits_{j=k}^{n}e\left(i_{j}\right) \right\Vert_{E}=\infty
\end{equation*}
for each $k\in\mathbb{N}$. Let $k_{1}$ be the smallest natural number for
which 
\begin{equation*}
\varphi\left(2u_{1}\right)\left\Vert\sum\limits_{j=1}^{k_{1}}e\left(i_{j}\right)\right\Vert_{E}>1.
\end{equation*}
Denoting $N_{1}=\left\{1,2,\ldots,k_{1}\right\}$ and setting $\Vert\sum\limits_{j\in\emptyset}e\left( i_{j}\right)\Vert _{E}=0$ we get
\begin{equation*}
\varphi\left(2u_{1}\right)\left\Vert\sum\limits_{j\in N_{1}\backslash\left\{k_{1}\right\}}e\left(i_{j}\right)\right\Vert_{E}\leq1.
\end{equation*}
Let now $k_{2}$ be the smallest natural number for which 
\begin{equation*}
\varphi\left(\left(1+\frac{1}{2}\right)u_{2}\right)\left\Vert\sum\limits_{j=k_{1}+1}^{k_{2}}e\left( i_{j}\right)\right\Vert_{E}>1.
\end{equation*}
Denoting $N_{2}=\left\{ k_{1}+1,\dots,k_{2}\right\} $ we get
\begin{equation*}
\varphi\left(\left(1+\frac{1}{2}\right)u_{2}\right)\left\Vert\sum\limits_{j\in N_{2}\backslash\left\{k_{2}\right\}}e\left(i_{j}\right)\right\Vert_{E}\leq1.
\end{equation*}
Following similarly by the induction we can find an increasing sequence of natural numbers $\left(k_{n}\right)_{n=1}^{\infty}$ such that the sets $N_{n}=\left\{k_{n-1}+1,\dots,k_{n}\right\}$ for $n\in\mathbb{N}$, where $k_{0}=0$, are pairwise disjoint and 
\begin{equation}
\varphi\left(\left(1+\frac{1}{n}\right)u_{n}\right)\left\Vert\sum\limits_{j\in N_{n}}e\left(i_{j}\right)\right\Vert_{E}>1\text{ and }\varphi\left(\left( 1+\frac{1}{n}\right)u_{n}\right)\left\Vert\sum\limits_{j\in N_{n}\backslash\left\{k_{n}\right\} }e\left(i_{j}\right)\right\Vert_{E}\leq1.  \label{pow1b}
\end{equation}
Defining
\begin{equation*}
x_{n}=\sum\limits_{j\in N_{n}}u_{n}e\left(i_{j}\right),
\end{equation*}
for $n\in\mathbb{N}$, applying \eqref{d2} and \eqref{pow1b}, we get
\begin{eqnarray*}
\rho_{\varphi}^{E}(x_{n})&=&\left\Vert\varphi\left(x_{n}\right)\right\Vert_{E}=\left\Vert\sum\limits_{j\in N_{n}}\varphi\left(u_{n}\right)e\left( i_{j}\right)\right\Vert_{E}\\
&\leq&C_{E}\left\Vert\sum\limits_{j\in N_{n}\backslash\left\{k_{n}\right\} }\varphi\left(u_{n}\right)e\left(i_{j}\right)\right\Vert_{E}+C_{E}\left\Vert\varphi\left(u_{n}\right)e\left(i_{k_{n}}\right)\right\Vert_{E}\\
&\leq&\frac{C_{E}}{C_{E}^{n+2}2^{n+2}}\left\Vert\sum\limits_{j\in N_{n}\backslash\left\{k_{n}\right\}}\varphi\left(\left(1+\frac{1}{n}\right)u_{n}\right)e\left( i_{j}\right)\right\Vert_{E}+\frac{d\cdot C_{E}}{d\cdot C_{E}^{n+2}2^{n+2}}\\
&\leq&\frac{1}{C_{E}^{n+1}2^{n+2}}+\frac{1}{C_{E}^{n+1}2^{n+2}}=\frac{1}{C_{E}^{n+1}2^{n+1}}.
\end{eqnarray*}
Taking $x=\sum\limits_{n=1}^{\infty}x_{n}$, basing on Theorem 1.1 from \cite{ma04}, we get $\varphi\left(x\right)\in E$ and 
\begin{equation*}
\rho_{\varphi}^{E}(x)=\left\Vert\varphi\left(x\right)\right\Vert_{E}=\left\Vert\sum\limits_{n=1}^{\infty}\varphi\left(x_{n}\right)\right\Vert_{E}\leq C_{E}\sum\limits_{n=1}^{\infty }C_{E}^{n}\left\Vert\varphi\left(x_{n}\right)\right\Vert_{E}\leq\sum\limits_{n=1}^{\infty }\frac{C_{E}^{n+1}}{C_{E}^{n+1}2^{n+1}}=\frac{1}{2}.
\end{equation*}
Moreover, for each $\lambda>1$ there is a number $n_{\lambda}\in\mathbb{N}$ such that $\lambda>1+\frac{1}{n_{\lambda}}$, whence by (\ref{pow1b}),
\begin{equation*}
\rho_{\varphi}^{E}(\lambda x)\geq\left\Vert\varphi\left(\left(1+\frac{1}{n_{\lambda}}\right)x\right)\right\Vert_{E}\geq\left\Vert\varphi\left( \left(1+\frac{1}{n_{\lambda}}\right)x_{n_{\lambda }}\right)\right\Vert_{E}>1.
\end{equation*}%
Thus $\left\Vert x\right\Vert_{\varphi}=1$. Defining the sequence $\left(y_{m}\right)_{m=1}^{\infty}$ and the operator $P:l_{\infty}\rightarrow
E_{\varphi}$ like in case $\left(i\right)$, we conclude that $P$ is an order linear isometry.
\end{proof}

\begin{theorem}\label{tko4}
Suppose $E$ is a quasi-Banach ideal space over the counting measure space $\left(\mathbb{N},2^{\mathbb{N}},m\right)$ such that
$E\backslash l_{\infty}\neq\emptyset$.

$\left(i\right)$ If $b_{\varphi}<\infty$, then $E_{\varphi }$ contains an order linearly isometric copy of $l_{\infty}$.

$\left(ii\right)$ Assume that $b_{\varphi}=\infty $ and there exist a number $d\in\left(0,1\right)$ and an increasing sequence of natural numbers $\left(i_{j}\right)_{j=1}^{\infty}$ such that $\lim_{j\rightarrow\infty}\left\Vert e\left(i_{j}\right)\right\Vert_{E}=0$ and $\frac{\left\Vert e\left( i_{j+1}\right)\right\Vert_{E}}{\left\Vert e\left( i_{j}\right) \right\Vert _{E}}\geq d$ for each $j\in\mathbb{N}$, where $e\left( i\right) $ is the $i$-th unit vector. If $\varphi\notin\Delta_{2}\left(\infty\right)$, then $E_{\varphi}$ contains an order linearly isometric copy of $l_{\infty}$.
\end{theorem}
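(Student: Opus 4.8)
The plan is to follow the block-construction scheme of Theorems \ref{tko1}--\ref{tko3}: I would build a normalised $x=\sum_{n}x_{n}\in E_{\varphi}$ whose blocks $x_{n}$ have pairwise disjoint supports, with $\rho_{\varphi}^{E}(x_{n})$ so small (against the weights $C_{E}^{n}$) that $\rho_{\varphi}^{E}(x)\le\tfrac12$, hence $\|x\|_{\varphi}\le1$, yet with $\rho_{\varphi}^{E}(\lambda x_{n})>1$ for every $\lambda>1+\tfrac1n$, hence $\|x\|_{\varphi}\ge1$; then split the $x_{n}$ into the blocks $y_{m}$ as in \eqref{yyy}--\eqref{yyyy}, so that $\|y_{m}\|_{\varphi}=1$ and $\sum_{m}\varphi(y_{m})=\varphi(x)$; and finally take $P\colon l_{\infty}\to E_{\varphi}$, $P(z)=\sum_{m}z_{m}y_{m}$, which is an order linear isometry by the computation already performed in Theorem \ref{tko1}. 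Part $(i)$ is the sequence counterpart of the case $b_{\varphi}<\infty$ of Theorem \ref{tko1}, while part $(ii)$ is the mirror image of Theorem \ref{tko3}$(ii)$, with the roles of small and large arguments of $\varphi$ exchanged. Throughout I would use that, $E$ being an ideal space with $E\not\subset l_{\infty}$, one has $\inf_{i}\|e(i)\|_{E}=0$ (otherwise, if $\inf_{i}\|e(i)\|_{E}=c>0$, every $x\in E_{+}$ would obey $\|x\|_{E}\ge c\,x(i)$ for all $i$, forcing $x\in l_{\infty}$); in particular, for every $\varepsilon>0$ there are infinitely many $i$ with $\|e(i)\|_{E}<\varepsilon$.

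For part $(i)$, following Theorem \ref{tko1}, I would set $u_{n}=\frac{2n+1}{2n+2}\,b_{\varphi}$, so that $\varphi(u_{n})<\infty$ (and $\varphi(u_{n})>0$ for $n$ large, since $u_{n}\uparrow b_{\varphi}>a_{\varphi}$) while $(1+\tfrac1n)u_{n}=\frac{2n+1}{2n}\,b_{\varphi}>b_{\varphi}$, so $\varphi((1+\tfrac1n)u_{n})=\infty$. Then I would pick pairwise distinct indices $i_{n}$ with $\|e(i_{n})\|_{E}\le(C_{E}^{n+2}2^{n+2}\varphi(u_{n}))^{-1}$ and put $x_{n}=u_{n}e(i_{n})$, $x=\sum_{n}x_{n}$. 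By Theorem 1.1 from \cite{ma04}, $\varphi(x)=\sum_{n}\varphi(u_{n})e(i_{n})\in E$ and $\rho_{\varphi}^{E}(x)\le C_{E}\sum_{n}C_{E}^{n}\varphi(u_{n})\|e(i_{n})\|_{E}\le\tfrac12$; and for $\lambda>1$, picking $n_{\lambda}$ with $\lambda>1+\tfrac1{n_{\lambda}}$, the $i_{n_{\lambda}}$-th coordinate of $\varphi(\lambda x)$ equals $\varphi(\lambda u_{n_{\lambda}})=\infty$, so $\varphi(\lambda x)\notin E$ and $\rho_{\varphi}^{E}(\lambda x)=\infty$. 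Thus $\|x\|_{\varphi}=1$, and the passage to $(y_{m})$ and $P$ is exactly that of Theorem \ref{tko1}.

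For part $(ii)$ I would first invoke the elementary fact that $\varphi\notin\Delta_{2}(\infty)$ is equivalent to $\limsup_{u\to\infty}\varphi(cu)/\varphi(u)=\infty$ for \emph{every} $c>1$: if $\varphi(c_{0}u)\le K\varphi(u)$ for all large $u$ and some $c_{0}>1$, then $\varphi(c_{0}^{k}u)\le K^{k}\varphi(u)$ by iteration, and taking $c_{0}^{k}\ge2$ yields $\varphi\in\Delta_{2}(\infty)$. Applying this with $c=1+\tfrac1n$, I would choose $u_{n}\uparrow\infty$ with $\varphi(u_{n})>0$ and
\begin{equation*}
\varphi\bigl((1+\tfrac1n)u_{n}\bigr)\ \ge\ \frac{C_{E}^{n+2}2^{n+2}}{d}\,\varphi(u_{n}),
\end{equation*}
and set $t_{n}=1/\varphi((1+\tfrac1n)u_{n})$, $\eta_{n}=(C_{E}^{n+2}2^{n+2}\varphi(u_{n}))^{-1}$, so that $t_{n}\le d\eta_{n}\le\eta_{n}$. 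From the given sequence $(i_{j})$ I would then select, inductively and with strictly increasing indices, terms $j_{n}=i_{\kappa_{n}}$ with $t_{n}<\|e(j_{n})\|_{E}\le\eta_{n}$: letting $m_{n}$ be the largest index with $\|e(i_{m_{n}})\|_{E}>\eta_{n}$ (it exists, and $m_{n}\to\infty$ as $\eta_{n}\to0$, so --- enlarging $u_{n}$ inside the unbounded set supplied by the failure of $\Delta_{2}(\infty)$ --- one may assume $m_{n}>\kappa_{n-1}$), and putting $\kappa_{n}=m_{n}+1$; then $\|e(j_{n})\|_{E}\le\eta_{n}$, while the hypothesis gives $\|e(j_{n})\|_{E}\ge d\|e(i_{m_{n}})\|_{E}>d\eta_{n}\ge t_{n}$. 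With $x_{n}=u_{n}e(j_{n})$ and $x=\sum_{n}x_{n}$ one obtains $\rho_{\varphi}^{E}(x_{n})=\varphi(u_{n})\|e(j_{n})\|_{E}\le\varphi(u_{n})\eta_{n}=(C_{E}^{n+2}2^{n+2})^{-1}$, whence $\rho_{\varphi}^{E}(x)\le\tfrac12$; and for $\lambda>1+\tfrac1n$, using $\varphi(\lambda u_{n})\ge\varphi((1+\tfrac1n)u_{n})$, $\|e(j_{n})\|_{E}>d\eta_{n}$ and the displayed inequality, $\rho_{\varphi}^{E}(\lambda x)\ge\varphi((1+\tfrac1n)u_{n})\|e(j_{n})\|_{E}>\varphi((1+\tfrac1n)u_{n})\,d\eta_{n}\ge1$. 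Hence $\|x\|_{\varphi}=1$, and the passage to $(y_{m})$ and $P$ is again as in Theorem \ref{tko1}.

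The delicate point --- the main obstacle --- is precisely this selection in part $(ii)$. Since here $\varphi(u_{n})\to\infty$ (whereas $\varphi(u_{n})\to0$ in Theorem \ref{tko3}$(ii)$), the smallness of $\rho_{\varphi}^{E}(x_{n})$ forces $\|e(j_{n})\|_{E}$ to be correspondingly small, yet it must stay large enough to keep $\varphi((1+\tfrac1n)u_{n})\|e(j_{n})\|_{E}>1$; as the numbers $\|e(i_{j})\|_{E}$ cannot be prescribed, one needs to know that the sequence $(\|e(i_{j})\|_{E})$ cannot fall from above $\eta_{n}$ to below $t_{n}$ in one step, and this is exactly what the hypothesis $\|e(i_{j+1})\|_{E}\ge d\|e(i_{j})\|_{E}$ guarantees --- which is also why the target window $(t_{n},\eta_{n}]$ is chosen of ratio-width at least $1/d$, hence the factor $1/d$ above. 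If one prefers blocks of several unit vectors, an alternative would be to precede each block by a gap: skip indices until all remaining $\|e(i_{j})\|_{E}$ fall below $\eta_{n}$, then run the block until the partial sum of the $e(i_{j})$ first exceeds $t_{n}$, so its terminating unit vector automatically has norm at most $\eta_{n}$; here too the bound involving $d$ is used, now to secure the existence of such a block. Everything else --- the disjointness bookkeeping, the identities $\sum_{m}\varphi(y_{m})=\varphi(x)$ and $\rho_{\varphi}^{E}(P(z)/\|z\|_{l_{\infty}})\le\rho_{\varphi}^{E}(x)\le\tfrac12$, and the reverse inequality $\|P(z)\|_{\varphi}\ge\|z\|_{l_{\infty}}$ --- would be identical to the corresponding parts of the proofs of Theorems \ref{tko1} and \ref{tko3}.
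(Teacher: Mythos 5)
Your proof is correct and follows essentially the same route as the paper: the same blocks $x_{n}=u_{n}e(\cdot)$ with $u_{n}=\frac{2n+1}{2n+2}b_{\varphi}$ in part $(i)$ and with $u_{n}$ supplied by the (iterated) failure of $\Delta_{2}(\infty)$ in part $(ii)$, the same use of the ratio hypothesis $\Vert e(i_{j+1})\Vert_{E}\geq d\Vert e(i_{j})\Vert_{E}$ to land the selected unit vector in a window of the form $(d\eta_{n},\eta_{n}]$, and the same passage to the blocks $(y_{m})$ and the isometry $P$ as in Theorem \ref{tko1}. The only cosmetic difference is in the selection step of $(ii)$: you take the last index whose norm exceeds the threshold and step one forward (after enlarging $u_{n}$), while the paper takes the first index after the previous block whose norm drops below the threshold, forcing its existence by additionally requiring $\varphi(u_{n})\Vert e(i_{j_{n-1}})\Vert_{E}\geq 1$; both exploit the hypothesis in exactly the same way.
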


\begin{remark}\label{jjdr}{\rm
$\left(i\right)$ It is easy to see that $E\backslash l_{\infty }\neq\emptyset$ if and only if there exists an increasing sequence of natural
numbers $\left(i_{j}\right)_{j=1}^{\infty}$ such that $\lim_{j\rightarrow\infty}\left\Vert e\left(i_{j}\right)\right\Vert_{E}=0$ (in one direction we should use Theorem 1.1 form \cite{ma04}).

$\left(ii\right)$ The assumption of the statement $(ii)$ of Theorem \ref{tko4} is satisfied by several sequence spaces such as: the Cesàro sequence spaces $ces_{p}$, $1<p<\infty $ (see \cite{kkm} for the respective definition) and the weighted sequence spaces $l_{p}\left(w\right)$, $p>0$, where $w\left(i\right)=\frac{1}{a^{i}}$, $a>1$ or $w\left(i\right)=\frac{1}{i^{q}}$, $q>0.$
}\end{remark}

\begin{proof}
$\left(i\right)$ Suppose $b_{\varphi}<\infty$. Let $u_{n}=\frac{2n+1}{2n+2}b_{\varphi }$. Since $E\backslash l_{\infty }\neq\emptyset$, so we
find an increasing sequence of natural numbers $\left(i_{n}\right)_{n=1}^{\infty}$ such that 
\begin{equation*}
\varphi\left(u_{n}\right)\left\Vert e\left(i_{n}\right)\right\Vert_{E}\leq\frac{1}{C_{E}^{n+1}2^{n+1}}.
\end{equation*}
Define
\begin{equation*}
x_{n}=u_{n}e\left(i_{n}\right)\hspace{3mm}\text{  and  }\hspace{3mm}x=\sum\limits_{n=1}^{\infty}x_{n}.
\end{equation*}
Following analogously as in the proof of Theorem \ref{tko1}, we get $\rho_{\varphi}^{E}(x)\leq\frac{1}{2}$ and $\left\Vert x\right\Vert_{\varphi}=1$. In consequence, taking the sequence $\left(y_{m}\right)_{m=1}^{\infty}$ like in the proof of Theorem \ref{tko1} (see formulas \eqref{yyy} and \eqref{yyyy}) we conclude that the operator $P:l_{\infty}\rightarrow E_{\varphi }$, defined by $P(z)=\sum\limits_{m=1}^{\infty}z_{m}y_{m}$ for $z=\left( z_{m}\right)\in l_{\infty}$, is an order linear isometry.

$\left(ii\right)$ Assume that $b_{\varphi}=\infty$ and $\varphi\notin\Delta_{2}\left(\infty\right)$. Thus we find a number $u_{1}>0$ such that
\begin{equation*}
\varphi\left(u_{1}\right)\left\Vert e\left(i_{1}\right)\right\Vert_{E}\geq1\hspace{3mm}\text{ and }\hspace{3mm}\varphi\left(\left(1+1\right) u_{1}\right)>\frac{2^{2}C_{E}^{2}}{d}\varphi\left(u_{1}\right).
\end{equation*}
Let $j_{1}>1$ be the smallest natural number for which 
\begin{equation*}
\varphi\left(u_{1}\right)\left\Vert e\left( i_{j_{1}}\right)\right\Vert_{E}\leq\frac{1}{2^{2}C_{E}^{2}}.
\end{equation*}
Since $\varphi\left(u_{1}\right)\left\Vert e\left(i_{j_{1}-1}\right)\right\Vert_{E}>\frac{1}{2^{2}C_{E}^{2}}$ and $\left\Vert e\left(i_{j_{1}}\right) \right\Vert_{E}\geq d\left\Vert e\left(i_{j_{1}-1}\right)\right\Vert_{E}$, it follows that
\begin{equation*}
\varphi\left(u_{1}\right)\left\Vert e\left(i_{j_{1}}\right)\right\Vert_{E}>\frac{d}{2^{2}C_{E}^{2}}.
\end{equation*}
Take $u_{2}$ satisfying 
\begin{equation*}
\varphi\left(u_{2}\right)\left\Vert e\left(i_{j_{1}}\right)\right\Vert_{E}\geq1\hspace{3mm}\text{ and }\hspace{3mm}\varphi\left(\left(1+\frac{1}{2}\right)u_{2}\right)>\frac{2^{3}C_{E}^{3}}{d}\varphi\left(u_{2}\right).
\end{equation*}
Take the smallest natural number $j_{2}>j_{1}$ such that 
\begin{equation*}
\varphi\left(u_{2}\right)\left\Vert e\left(i_{j_{2}}\right)\right\Vert_{E}\leq\frac{1}{2^{3}C_{E}^{3}}.
\end{equation*}
Following similarly as for $j_{1}$ we get
\begin{equation*}
\varphi\left(u_{2}\right)\left\Vert e\left(i_{j_{2}}\right)\right\Vert_{E}>\frac{d}{2^{3}C_{E}^{3}}.
\end{equation*}

Proceeding in a such a way by the induction we can find an increasing sequence $\left(u_{n}\right)$ of positive numbers and an increasing
sequence $\left(i_{j_{n}}\right)$ of natural numbers such that
\begin{equation}
\varphi\left(\left(1+\frac{1}{n}\right)u_{n}\right)>\frac{2^{n+1}C_{E}^{n+1}}{d}\varphi\left(u_{n}\right)\label{brak-d2}
\end{equation}
and
\begin{equation*}
\frac{d}{2^{n+1}C_{E}^{n+1}}<\varphi\left( u_{n}\right)\left\Vert e\left(i_{j_{n}}\right)\right\Vert_{E}\leq\frac{1}{2^{n+1}C_{E}^{n+1}}
\end{equation*}
for each $n\in\mathbb{N}$. Defining
\begin{equation*}
x_{n}=u_{n}e\left(i_{j_{n}}\right)\hspace{3mm}\text{ and }\hspace{3mm}x=\sum\limits_{n=1}^{\infty}x_{n},
\end{equation*}
analogously in the proof of Theorem \ref{tko1} we get $\rho_{\varphi}^{E}(x)\leq\frac{1}{2}$. Moreover, for each $\lambda>1$
there exists $n_{\lambda}\in\mathbb{N}$ such that $\lambda>1+\frac{1}{n_{\lambda}},$ whence, by (\ref{brak-d2})
\begin{eqnarray*}
\rho_{\varphi}^{E}(\lambda x)&\geq&\left\Vert\varphi\left(\left(1+\frac{1}{n_{\lambda }}\right)x\right)\right\Vert_{E}\geq\varphi\left(\left(1+\frac{1}{n_{\lambda}}\right)u_{n_{\lambda}}\right)\left\Vert e\left(i_{j_{n_{\lambda}}}\right)\right\Vert_{E}\\
&>&\frac{2^{n+1}C_{E}^{n+1}}{d}\cdot\varphi\left(u_{n_{\lambda }}\right)\left\Vert e\left(i_{j_{n_{\lambda}}}\right)\right\Vert_{E}>\frac{
2^{n+1}C_{E}^{n+1}}{d}\cdot\frac{d}{2^{n+1}C_{E}^{n+1}}=1.
\end{eqnarray*}
Thus $\left\Vert x\right\Vert_{\varphi }=1$. Defining the sequence $\left(y_{m}\right)_{m=1}^{\infty}$ and the operator $P:l_{\infty }\rightarrow E_{\varphi }$ as in case $\left( i\right) $, we conclude that $P$ is an order linear isometry.
\end{proof}

\begin{theorem}\label{tocc}
Let $E$ be a quasi-Banach ideal space over the counting measure space $\left(\mathbb{N},2^{\mathbb{N}},m\right)$ and $\varphi\in\Delta _{2}^{E}$. Then $E_{\varphi}$ is order continuous if and only if $E$ is order continuous.
\end{theorem}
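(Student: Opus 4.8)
The plan is to prove both implications by reducing, in each direction, to the order continuity of $E$ via the disjoint--sequence characterization of order continuity recalled in Section~2 (Theorem~2.1 in \cite{Kol2018-Posit}). The argument runs parallel to the proof of Theorem~\ref{toc}, the one genuinely new feature being the case $E\subset L_{\infty}$, which is impossible for a nonatomic measure but does occur here (e.g.\ $E=l_{p}$, $0<p<\infty$). \emph{Sufficiency.} Assume $E$ is order continuous and $\varphi\in\Delta_{2}^{E}$; take $z\in E_{\varphi}$ and $(z_{n})$ in $(E_{\varphi})_{+}$ with $0\leq z_{n}\leq|z|$ and $z_{n}\rightarrow0$ $\mu$-a.e. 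By Lemma~\ref{le33} it suffices to show $\rho_{\varphi}^{E}(\lambda z_{n})\rightarrow0$ for each $\lambda>0$, and by Lemma~\ref{l3p} there is $\mu_{0}>0$ with $\varphi(\mu_{0}|z|)\in E$. Fix $\lambda>0$ and $i\in\mathbb{N}$ with $\lambda\leq2^{i}\mu_{0}$. If neither $L_{\infty}\subset E$ nor $E\subset L_{\infty}$ (resp.\ $L_{\infty}\subset E$), then $\varphi\in\Delta_{2}(\mathbb{R}_{+})$ (resp.\ $\Delta_{2}(\infty)$) and, exactly as in Theorem~\ref{toc}, iterating \eqref{roo1} gives $\varphi(\lambda|z|)\leq K^{i}\varphi(\mu_{0}|z|)\in E$ (resp.\ $\varphi(\lambda|z|)\leq K^{i}\varphi(\mu_{0}|z|)+\varphi(2^{i}u_{0})\chi_{\mathbb{N}}\in E$, using $\chi_{\mathbb{N}}\in L_{\infty}\subset E$). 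Then $0\leq\varphi(\lambda z_{n})\leq\varphi(\lambda|z|)\in E$ and $\varphi(\lambda z_{n})\rightarrow0$ $\mu$-a.e.\ (right continuity of $\varphi$ at $0$), so order continuity of $E$ yields $\rho_{\varphi}^{E}(\lambda z_{n})=\|\varphi(\lambda z_{n})\|_{E}\rightarrow0$.

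It remains to treat $E\subset L_{\infty}$, where $\varphi\in\Delta_{2}(0)$ and hence $a_{\varphi}=0$. First I would note that $E\subset L_{\infty}$ forces $\inf_{k}\|e(k)\|_{E}=:\delta_{E}>0$ (Remark~\ref{jjdr}$(i)$; or directly, a subsequence with $\|e(k_{j})\|_{E}\rightarrow0$ would produce, by Theorem~1.1 of \cite{ma04}, an unbounded $\sum_{j}j\,e(k_{j})\in E$), so that, together with order continuity of $E$, $\chi_{B}\in E$ \emph{only if $B$ is finite} (otherwise a disjoint sequence $e(k_{j})\leq\chi_{B}$ would be norm--null). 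Now fix $\lambda>0$; for $\lambda\leq\mu_{0}$ argue as above, while for $c:=\lambda/\mu_{0}>1$ take $i$ minimal with $2^{i}\geq c$ and split $\mathbb{N}=A\cup B$ with $A=\{k:\lambda|z(k)|\leq u_{0}\}$. On $A$ one has $2^{j}\mu_{0}|z(k)|<u_{0}$ for $j\leq i-1$, so \eqref{roo1} gives $\varphi(\lambda z_{n})\chi_{A}\leq\varphi(\lambda|z|)\chi_{A}\leq K^{i}\varphi(\mu_{0}|z|)\in E$, whence $\|\varphi(\lambda z_{n})\chi_{A}\|_{E}\rightarrow0$ by order continuity of $E$. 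On $B$ one has $\mu_{0}|z(k)|>u_{0}/c>0$, so $\varphi(\mu_{0}|z(k)|)\geq\varphi(u_{0}/c)>0$ (as $a_{\varphi}=0$); hence $\chi_{B}\leq\varphi(u_{0}/c)^{-1}\varphi(\mu_{0}|z|)\in E$ and $B$ is finite, so $\varphi(\lambda z_{n})\chi_{B}$ is supported on a fixed finite set with each coordinate tending to $0$, giving $\|\varphi(\lambda z_{n})\chi_{B}\|_{E}\rightarrow0$ (and it is finite for large $n$). Adding the two estimates completes sufficiency.

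\emph{Necessity.} I prove the contrapositive. If $E$ is not order continuous, choose (Theorem~2.1 in \cite{Kol2018-Posit}) $x\in E_{+}$ and a pairwise disjoint $(x_{n})$ in $E_{+}$ with $0\leq x_{n}\leq x$ and, after passing to a subsequence, $\|x_{n}\|_{E}\geq\varepsilon_{0}>0$. Pick $s>0$ thus: if $b_{\varphi}<\infty$ and $\varphi(b_{\varphi})<\infty$ --- which, since $\varphi\in\Delta_{2}^{E}$, forces $E\subset L_{\infty}$ and hence $\|x\|_{L_{\infty}}\leq D_{E}\|x\|_{E}=:R<\infty$ --- put $s=\varphi(b_{\varphi})/R$, so that $sx(k)\leq\varphi(b_{\varphi})$ for all $k$; otherwise put $s=1$. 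Set $\widetilde{x}=sx$, $\widetilde{x}_{n}=sx_{n}$, and define $z=\varphi^{-1}(\widetilde{x})\,\chi_{\supp x}$ and $z_{n}=\varphi^{-1}(\widetilde{x}_{n})\,\chi_{\supp x_{n}}$. By the choice of $s$ and Lemma~\ref{skladanie}$(iii)$--$(iv)$, $\varphi(\varphi^{-1}(v))=v$ for every value $v$ attained by $\widetilde{x}$ or $\widetilde{x}_{n}$, so $\varphi(z)=\widetilde{x}\in E$ and $\varphi(z_{n})=\widetilde{x}_{n}\in E$; thus $z,z_{n}\in E_{\varphi}$, the $z_{n}$ are pairwise disjoint, $0\leq z_{n}\leq z$, and $\rho_{\varphi}^{E}(z_{n})=\|\widetilde{x}_{n}\|_{E}=s\|x_{n}\|_{E}\geq s\varepsilon_{0}>0$ for all $n$. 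If $\|z_{n}\|_{\varphi}\rightarrow0$ then, by Lemma~\ref{le33}, $\rho_{\varphi}^{E}(z_{n})\rightarrow0$, a contradiction; hence $\|z_{n}\|_{\varphi}\nrightarrow0$ and, by Theorem~2.1 in \cite{Kol2018-Posit}, $E_{\varphi}$ is not order continuous.

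The delicate part is the case $E\subset L_{\infty}$. In the sufficiency direction $\Delta_{2}(0)$ controls only small arguments, and the estimate goes through precisely because the ``large--argument'' set $B$ is forced to be finite; in the necessity direction one must cap the values before applying $\varphi^{-1}$ (which recovers only values $\leq\varphi(b_{\varphi})$ once this is finite), the key simplification being that it suffices to keep $\rho_{\varphi}^{E}(z_{n})$ bounded away from $0$ rather than above $1$. One should also keep in mind that $\rho_{\varphi}^{E}(\lambda z_{n})$ may be $+\infty$ for finitely many $n$ when $b_{\varphi}<\infty$, which is harmless for the limits.
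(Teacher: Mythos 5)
Your proof is correct, and its overall skeleton matches the paper's: the cases $l_{\infty}\subset E$ and ``neither inclusion'' are delegated to the argument of Theorem \ref{toc}, the real work being the case $E\subset l_{\infty}$, handled by iterating $\Delta_{2}(0)$ on small values for sufficiency and by pulling a non-norm-null disjoint sequence back through $\varphi^{-1}$ for necessity. The details differ in two places. In sufficiency the paper uses order continuity of $E\subset l_{\infty}$ to get $E\subset c_{0}$, hence $|x|\in c_{0}$, iterates $\Delta_{2}(0)$ on a tail $\{i>i_{\lambda}\}$ and treats the finite head coordinatewise; you instead split by the value threshold $u_{0}$ and show the exceptional set $B$ is finite via $\chi_{B}\in E$, $\inf_{k}\|e(k)\|_{E}>0$ and order continuity --- an equivalent and equally valid decomposition. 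In necessity the paper normalizes $\|x_{n}\|_{E}>1$, so that $\rho_{\varphi}^{E}(y_{n})>1$ and Lemma \ref{f_x}$(ii)$ give $\|y_{n}\|_{\varphi}>1$ at once, resolving the degenerate case $b_{\varphi}<\infty$, $\varphi(b_{\varphi})<\infty$ by a case analysis on whether some coordinate of $x_{n}$ exceeds $\varphi(b_{\varphi})$ (then $y_{n}$ attains the value $b_{\varphi}$, forcing $\|y_{n}\|_{\varphi}\geq1$). You instead cap all values uniformly by rescaling with $s=\varphi(b_{\varphi})/R$ and keep the modular only bounded away from zero, invoking Lemma \ref{le33} to contradict $\|z_{n}\|_{\varphi}\rightarrow0$. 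This is cleaner (no case analysis), but note that the direction of Lemma \ref{le33} you use rests on condition $(v)$ of Definition \ref{quasi-mod}, i.e.\ on the hypothesis $\alpha_{\varphi}^{E}>0$ standing behind the definition of $E_{\varphi}$; the assumption $\varphi\in\Delta_{2}^{E}$ alone does not supply it (cf.\ $\varphi_{1}(u)=\ln(1+u)$ in Example \ref{ne}). Within the paper's framework this is available, so there is no gap, but the paper's normalization trick needs only the elementary norm--modular relation and is in that sense slightly more economical.
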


\begin{proof}
In the case when $l_{\infty }\subset E$ or (neither $E\subset l_{\infty}$ nor $l_{\infty }\subset E$) the proof goes the same way as the proof of
Theorem \ref{toc}. Now we will consider the case $E\subset l_{\infty}$.

Suppose $E$ is order continuous. Thus $E\subset c_{0}$. Take any $x\in E_{\varphi}$ and a sequence $\left(x_{n}\right)$ in $\left(E_{\varphi
}\right)_{+}$ such that $0\leftarrow x_{n}\leq\left\vert x\right\vert$. By the definition of $E_{\varphi },$ there exists a number $\lambda_{x}>0$ such that $\varphi\left(\lambda_{x}\left\vert x\right\vert\right)\in E$, whence $\varphi\left(\lambda_{x}\left\vert x\right\vert\right)\in c_{0}$. Take $\lambda>0$. There exists a number $m=m\left(\lambda\right)\in\mathbb{N}$ such that $\lambda\leq 2^{m}\lambda_{x}$. Since $\varphi\in\Delta_{2}^{E}=\Delta _{2}\left(0\right)$, so $a_{\varphi }=0$ and $\left\vert x\right\vert\in c_{0}$. Denote by $u_{0}$, $K$ the constants from the condition $\Delta_{2}\left(0\right)$, note that without loss of generality we can assume that $2u_{0}<b_{\varphi}$. In consequence, we can find a number $i_{\lambda}\in\mathbb{N}$ such that $2^{m}\lambda_{x}\left\vert x\left( i\right)\right\vert\leq2u_{0}<b_{\varphi }$ for each $i>i_{\lambda}$. Set $N_{\lambda}=\mathbb{N}\backslash\left\{1,\ldots,i_{\lambda }\right\}$. Then
\begin{equation*}
\varphi\left(\lambda\left\vert x\right\vert\chi_{N_{\lambda}}\right)\leq\varphi\left( 2^{m}\lambda _{x}\left\vert x\right\vert\chi_{N_{\lambda}}\right)\leq K^{m}\varphi\left(\lambda _{x}\left\vert x\right\vert\chi_{N_{\lambda }}\right)\in E.
\end{equation*}
Since $0\leftarrow\varphi\left(\lambda\left\vert x_{n}\right\vert\chi_{N_{\lambda}}\right)\leq\varphi\left(\lambda\left\vert x\right\vert
\chi_{N_{\lambda }}\right)\in E$ and $E$ is order continuous, it follows that
\begin{equation*}
\lim_{n\rightarrow\infty}\rho_{\varphi}^{E}(\lambda x_{n}\chi_{N_{\lambda}})=\lim_{n\rightarrow\infty}\left\Vert\varphi\left(\lambda\left\vert x_{n}\right\vert\chi_{N_{\lambda }}\right)\right\Vert_{E}=0.
\end{equation*}
Moreover, $\lambda\left\vert x_{n}\left(i\right)\right\vert\rightarrow0$ for every $i=1,2,\ldots,i_{\lambda }$, whence $\varphi\left(\lambda\left\vert x_{n}\left(i\right)\right\vert\right)\rightarrow0$ and $\varphi\left(\lambda\left\vert x_{n}\left( i\right)\right\vert\right)\linebreak\|e(i)\|_{E}\rightarrow0$ for each $i=1,2,\ldots,i_{\lambda}$. Consequently, $\lim_{n\rightarrow \infty }\rho _{\varphi }^{E}(\lambda
x_{n})=0.$ Since $\lambda >0$ was arbitrary, applying Lemma \ref{le33}, we conclude that $\left\Vert x_{n}\right\Vert_{\varphi}\rightarrow0$.

Assume now that $E$ is not order continuous. Then there exists $x\in E$ and a sequence $\left(x_{n}\right) $ in $E_{+}$ such that $0\leftarrow
x_{n}\leq\left\vert x\right\vert$ for each $n\in \mathbb{N}$ and $\left\Vert x_{n}\right\Vert_{E}\not\rightarrow0$. Without loss of generality we can assume that $\left\Vert x_{n}\right\Vert_{E}>1$ for each $n\in\mathbb{N}$. Define $y=\varphi^{-1}\left(x\right) $ and $y_{n}=\varphi ^{-1}\left(x_{n}\right)$ for any $n\in\mathbb N$. We have $0\leftarrow y_{n}\leq y$.

If $b_{\varphi }=\infty $ or $b_{\varphi }<\infty $ and $\varphi\left(b_{\varphi}\right)=\infty$, then 
\begin{equation}
\rho_{\varphi}^{E}(y_{n})=\left\Vert\varphi\left(y_{n}\right)\right\Vert_{E}=\left\Vert x_{n}\right\Vert_{E}>1  \label{jjn}
\end{equation}
for each $n\in\mathbb{N}$ and consequently $\left\Vert y_{n}\right\Vert_{\varphi}>1$ for each $n\in\mathbb{N}$. Thus $E_{\varphi }$ is not order continuous.

Assume now that $b_{\varphi}<\infty$ and $\varphi\left(b_{\varphi }\right)<\infty$. Take $n\in\mathbb{N}$. If $x_{n}\left(i\right)\leq\varphi\left(b_{\varphi}\right)$ for any $i\in\mathbb{N}$, then analogously as in \eqref{jjn} we get $\rho_{\varphi}^{E}(y_{n})>1$ and $\left\Vert y_{n}\right\Vert_{\varphi}>1$. If $x_{n}\left(i_{n}\right) >\varphi\left(b_{\varphi}\right)$ for some $i_{n},$ then $\left\Vert y_{n}\right\Vert_{\varphi}\geq1$. Then again $E_{\varphi}$ is not order continuous.
\end{proof}

Applying Theorems \ref{tko3}, \ref{tko4} and \ref{tocc} we get the following

\begin{corollary}
$\left(i\right)$ Let $E$ be a quasi-Banach symmetric sequence space with $E\neq l_{\infty}$. Then $E_{\varphi}$ is order continuous if and only if $E$ is order continuous and  $\varphi\in\Delta_{2}\left( 0\right) $.

$\left(ii\right) $ Let $E$ be the Ces\`{a}ro sequence space $ces_{p}$ with $1<p<\infty$ or the weighted sequence spaces $l_{p}\left(w\right)$ for $p\in(0,\infty)$, where $w\left(i\right)=\frac{1}{i^{q}}$ with $p\cdot q\in (0,1]$. Then $E_{\varphi}$ is order continuous if and only if  $\varphi\in\Delta_{2}\left(\mathbb{R}\right)$.

$\left(iii\right)$ Let $E$ be the  weighted sequence space $l_{p}\left(w\right)$ for $p\in(0,\infty)$, where $w\left(i\right)=\frac{1}{a^{i}}$ with $a>1$, or $w\left(i\right)=\frac{1}{i^{q}}$ with $p\cdot q>1$. Then $E_{\varphi}$ is order continuous if and only if $\varphi\in\Delta_{2}\left({\infty}\right)$.
\end{corollary}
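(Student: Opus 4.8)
The plan is to assemble the corollary from Theorems~\ref{tko3}, \ref{tko4} and \ref{tocc} by a case analysis according to the position of $E$ among the three classes of quasi-Banach ideal spaces over $(\mathbb{N},2^{\mathbb{N}},m)$; no new idea is required beyond one observation used in all three parts.

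\emph{The observation.} If $E_{\varphi}$ contains an order linearly isometric copy of $l_{\infty}$, then $E_{\varphi}$ is not order continuous. I would prove this directly from the explicit form of the copies produced in Theorems~\ref{tko1}--\ref{tko4}: there the isometry has the shape $P(z)=\sum_{m}z_{m}y_{m}$ with pairwise disjointly supported $y_{m}\geq 0$ and $\|y_{m}\|_{\varphi}=1$. Putting $x:=\sum_{m}y_{m}$ (which lies in $E_{\varphi}$ because $\rho_{\varphi}^{E}(x)\leq\frac{1}{2}$, resp.\ $\rho_{\varphi}^{E}(x)=0$, in those proofs) and $x_{n}:=\sum_{m>n}y_{m}=P\big((0,\dots,0,1,1,\dots)\big)$, we get $0\leq x_{n}\leq x$, $x_{n}\to 0$ $m$-a.e.\ (disjointness of supports), yet $\|x_{n}\|_{\varphi}=\|(0,\dots,0,1,1,\dots)\|_{l_{\infty}}=1$; by the $\mu$-a.e.\ characterization of order continuity for quasi-Banach ideal spaces recalled in Section~4, $E_{\varphi}$ is not order continuous. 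Combined with Theorem~\ref{tocc}, this gives, for every $E$ belonging to a class in which ``$\varphi\notin\Delta_{2}^{E}$'' is already known to force such a copy in $E_{\varphi}$, the equivalence $E_{\varphi}\in(\mathrm{OC})\iff\big[\varphi\in\Delta_{2}^{E}\ \text{and}\ E\in(\mathrm{OC})\big]$.

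\emph{Part $(i)$.} A symmetric sequence space $E$ with $E\neq l_{\infty}$ is contained in $l_{\infty}$ (an unbounded $x\in E$ would dominate $k\,e(i_{k})$ along a subsequence, forcing $k\|e(1)\|_{E}\leq\|x\|_{E}$ for all $k$), so $E\subsetneq l_{\infty}$, i.e.\ $E$ is of class $(3)$ and $\Delta_{2}^{E}=\Delta_{2}(0)$; moreover $l_{\infty}\not\subset E$ and, by the remark following Theorem~\ref{tko3}, $E$ satisfies the hypothesis of Theorem~\ref{tko3}$(ii)$. If $\varphi\notin\Delta_{2}(0)$, then either $a_{\varphi}>0$ (apply Theorem~\ref{tko3}$(i)$) or $a_{\varphi}=0$ (apply Theorem~\ref{tko3}$(ii)$), so $E_{\varphi}$ contains an order linearly isometric copy of $l_{\infty}$ and is not order continuous; if $\varphi\in\Delta_{2}(0)=\Delta_{2}^{E}$, Theorem~\ref{tocc} gives the equivalence with order continuity of $E$. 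This is exactly $(i)$.

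\emph{Parts $(ii)$ and $(iii)$.} First identify the classes: $ces_{p}$, $1<p<\infty$, is of class $(1)$ (Preliminaries), and $l_{p}(w)$ with $w(i)=i^{-q}$, $pq\in(0,1]$, is of class $(1)$ too, since $\mathbf{1}\notin l_{p}(w)$ ($\sum_{i}w(i)^{p}=\sum_{i}i^{-pq}=\infty$) while a sequence supported on a sparse set (e.g.\ $x(2^{k})=k$, otherwise $0$) lies in $l_{p}(w)$, so $l_{p}(w)\not\subset l_{\infty}$; hence $\Delta_{2}^{E}=\Delta_{2}(\mathbb{R}_{+})$. Similarly, for $l_{p}(w)$ with $w(i)=a^{-i}$ ($a>1$) or $w(i)=i^{-q}$ ($pq>1$) one has $\sum_{i}w(i)^{p}<\infty$, so $l_{\infty}\subset l_{p}(w)$, i.e.\ $E$ is of class $(2)$ and $\Delta_{2}^{E}=\Delta_{2}(\infty)$. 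In all these cases $E$ is order continuous (the finitely supported sequences are norm-dense), and $E$ satisfies the hypothesis of Theorem~\ref{tko3}$(ii)$ (for class $(1)$, via the remark following Theorem~\ref{tko3}, using $\sup_{i}\|e(i)\|_{E}<\infty$) and/or of Theorem~\ref{tko4}$(ii)$ (by Remark~\ref{jjdr}$(ii)$). Hence the ``if'' direction in $(ii)$ and $(iii)$ is Theorem~\ref{tocc} together with $E\in(\mathrm{OC})$. For the ``only if'' direction in $(iii)$: $\varphi\notin\Delta_{2}(\infty)$ forces $b_{\varphi}<\infty$ (then Theorem~\ref{tko4}$(i)$) or $b_{\varphi}=\infty$ (then Theorem~\ref{tko4}$(ii)$), producing a copy of $l_{\infty}$. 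For the ``only if'' direction in $(ii)$ I would use the elementary identity $\Delta_{2}(\mathbb{R}_{+})=\Delta_{2}(0)\cap\Delta_{2}(\infty)$ valid when $a_{\varphi}=0$, $b_{\varphi}=\infty$ (on a compact subinterval of $(0,\infty)$ the ratio $\varphi(2u)/\varphi(u)$ is bounded, since there $\varphi$ is continuous and strictly positive and finite): thus $\varphi\notin\Delta_{2}(\mathbb{R}_{+})$ means $a_{\varphi}>0$ (Theorem~\ref{tko3}$(i)$), or $b_{\varphi}<\infty$ (Theorem~\ref{tko4}$(i)$), or ($a_{\varphi}=0$, $b_{\varphi}=\infty$ and) $\varphi\notin\Delta_{2}(0)$ (Theorem~\ref{tko3}$(ii)$) or $\varphi\notin\Delta_{2}(\infty)$ (Theorem~\ref{tko4}$(ii)$) --- in every case $E_{\varphi}$ contains an order linearly isometric copy of $l_{\infty}$, hence is not order continuous.

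The argument carries no deep content: the three cited theorems do the real work, and the only point deserving care is the implication ``order linearly isometric copy of $l_{\infty}$ $\Rightarrow$ not order continuous'', which is handled by exploiting the concrete disjointly-supported form of the copies built in Theorems~\ref{tko3}--\ref{tko4}. The remaining steps --- placing each concrete space in its class, verifying $\sup_{i}\|e(i)\|_{E}<\infty$ or the existence of a suitable subsequence $(i_{j})$ so the remarks apply, and the identity $\Delta_{2}(\mathbb{R}_{+})=\Delta_{2}(0)\cap\Delta_{2}(\infty)$ --- are routine bookkeeping.
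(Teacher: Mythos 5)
Your proposal is correct and follows exactly the route the paper intends: the corollary is stated there as an immediate application of Theorems \ref{tko3}, \ref{tko4} and \ref{tocc}, and your case analysis (placing each space in its class, checking the hypotheses via the remarks after Theorems \ref{tko3} and \ref{tko4}, and splitting $\varphi\notin\Delta_{2}^{E}$ according to $a_{\varphi}$, $b_{\varphi}$) is precisely the bookkeeping the authors leave to the reader. Your direct verification that the disjointly supported copies of $l_{\infty}$ preclude order continuity is a harmless self-contained substitute for the standard fact the paper uses elsewhere (Proposition 2.2 of \cite{lee}).
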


Denote by $P$ the property of having the linear order isometric copy of $l_{\infty}$. Until now we have shown that if $\varphi\notin\Delta_{2}^{E}$, then $E_{\varphi}\in\left(P\right)$ (under some natural assumptions on $E$), see Corollary \ref{cko1} and Theorems \ref{tko3} and \ref{tko4}. Now we are going to consider if the following implications hold:

$\left(1\right)$ if $E\in\left(P\right)$, then $E_{\varphi }\in \left(P\right)$?

$\left(2\right)$ if $E_{\varphi}\in\left(P\right)$, then $E\in\left(P\right)$?\\
We will find some natural conditions which imposed on the function $\varphi$ guarantee that the above implications are true. Note that in such a way we can see a characteristic dichotomy: to prove that $E_{\varphi}\in\left(P\right)$ we are able to find either the suitable conditions imposed on $\varphi $ or the suitable conditions imposed on $E$ which ensure that $E_{\varphi}\in\left(P\right)$.
\vspace{3mm}

The proof of the next theorem proceed the same as proof Theorem 1 in \cite{hud}.

\begin{theorem}\label{tk1} A quasi-Banach ideal space $E$ contains an order linearly isometric copy of $l_{\infty}$ if and only if there exists in $E$ a sequence $(x_{n})_{n=1}^{\infty}$ of positive elements which are pairwise orthogonal, that is, $\supp x_{n}\cap\supp x_{m}=\emptyset$ if $n\neq m$, such that $\|x_{n}\|_{E}=1$ for any $n\in\mathbb N$ and $\|x\|_{E}=1$, where $x:=\sum_{n=1}^{\infty}x_{n}\in E$.\end{theorem}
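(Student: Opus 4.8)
The two implications are essentially formal, once one recalls that an order linear isometry $P\colon l_{\infty}\to E$ --- a linear map which, together with its inverse on $P(l_{\infty})$, is positive --- is automatically a lattice homomorphism. For necessity I would take such a $P$ and set $x_{n}=P(e(n))$, where $e(n)$ is the $n$-th unit vector of $l_{\infty}$. Then $x_{n}\geq 0$ and $\Vert x_{n}\Vert_{E}=\Vert e(n)\Vert_{l_{\infty}}=1$; moreover $e(n)\wedge e(m)=0$ for $n\neq m$, so $x_{n}\wedge x_{m}=P\bigl(e(n)\wedge e(m)\bigr)=0$, i.e.\ $\supp x_{n}\cap\supp x_{m}=\emptyset$ $\mu$-a.e. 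The partial sums $s_{N}:=\sum_{n=1}^{N}x_{n}=P\bigl(\sum_{n=1}^{N}e(n)\bigr)$ then satisfy $s_{N}\in E_{+}$, $s_{N}\uparrow x$ (where $x:=\sum_{n=1}^{\infty}x_{n}$ is a well-defined element of $L^{0}$, the supports being pairwise disjoint) and $\Vert s_{N}\Vert_{E}=\bigl\Vert\sum_{n=1}^{N}e(n)\bigr\Vert_{l_{\infty}}=1$; I would then apply the Fatou property of $E$ to conclude $x\in E$ and $\Vert x\Vert_{E}=\lim_{N}\Vert s_{N}\Vert_{E}=1$, which yields the required sequence. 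Note that norm convergence of the series $\sum x_{n}$ is not available here, since $\bigl(\sum_{n\leq N}e(n)\bigr)_{N}$ is not Cauchy in $l_{\infty}$; the Fatou property is exactly what replaces it.

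For the converse, starting from pairwise orthogonal $x_{n}\in E_{+}$ with $\Vert x_{n}\Vert_{E}=1$ and $x:=\sum_{n=1}^{\infty}x_{n}\in E$, $\Vert x\Vert_{E}=1$, I would define $P\colon l_{\infty}\to L^{0}$ by $P(z)=\sum_{n=1}^{\infty}z_{n}x_{n}$, understood pointwise; this is unambiguous because at $\mu$-a.e.\ $t$ at most one summand is nonzero, whence $|P(z)|=\sum_{n=1}^{\infty}|z_{n}|x_{n}\leq\Vert z\Vert_{l_{\infty}}x$. By the ideal property $P(z)\in E$ and $\Vert P(z)\Vert_{E}\leq\Vert z\Vert_{l_{\infty}}\Vert x\Vert_{E}=\Vert z\Vert_{l_{\infty}}$; conversely, disjointness of supports gives $|P(z)|\geq|z_{n}|x_{n}$, hence $\Vert P(z)\Vert_{E}\geq|z_{n}|\Vert x_{n}\Vert_{E}=|z_{n}|$ for every $n$, so $\Vert P(z)\Vert_{E}\geq\Vert z\Vert_{l_{\infty}}$ and $P$ is a linear isometry. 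Finally $P(z)\geq 0$ whenever $z\geq 0$ and, again by disjointness, $P(z\vee w)=\sum_{n=1}^{\infty}\max(z_{n},w_{n})x_{n}=P(z)\vee P(w)$, so $P$ is a lattice homomorphism; hence $P$ is an order linear isometry and $P(l_{\infty})$ is the required copy of $l_{\infty}$ in $E$.

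The only steps that need care are the two indicated above: in the sufficiency part, defining $P$ as a pointwise sum rather than as a norm-convergent series, so that no order continuity of $E$ is needed; and in the necessity part, invoking the Fatou property of $E$ to produce $x=\sum_{n=1}^{\infty}x_{n}\in E$ with $\Vert x\Vert_{E}=1$. Everything else is immediate from the ideal-space axioms and from $P$ being a lattice isomorphism onto its range, which is why the argument follows the lines of the proof of Theorem~1 in \cite{hud}.
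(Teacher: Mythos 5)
Your sufficiency half is correct and is exactly the construction the paper has in mind when it refers to the proof of Theorem 1 in \cite{hud}: defining $P(z)=\sum_{n}z_{n}x_{n}$ pointwise (so that only the ideal property, not any triangle inequality, is used) and squeezing $|z_{n}|x_{n}\leq|P(z)|\leq\Vert z\Vert_{l_{\infty}}x$ gives the isometry, and disjointness of the supports makes $P$ a lattice homomorphism, hence an order linear isometry in any reasonable sense.

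The necessity half, however, has a genuine gap at its very first step. Your claim that an order linear isometry $P\colon l_{\infty}\to E$ --- which you define as a linear isometry such that $P$ and $P^{-1}$ restricted to $P(l_{\infty})$ are positive --- ``is automatically a lattice homomorphism'' is false for non-surjective maps: bipositivity transfers the order of $l_{\infty}$ onto the induced order of the range, but it does not make the range a sublattice of $E$, and infima computed in $E$ can be strictly larger than those computed in the range. Concretely, $P\colon l_{\infty}\to l_{\infty}$ given by $P(z)=\bigl(\tfrac{z_{1}+z_{2}}{2},\,z_{1},z_{2},z_{3},\dots\bigr)$ is a linear isometry with $P(z)\geq0$ if and only if $z\geq0$, yet $P(e(1))\wedge P(e(2))=(\tfrac12,0,0,\dots)\neq0$; so under your own definition the elements $x_{n}=P(e(n))$ need not be pairwise orthogonal, and the rest of the argument has nothing to stand on. If instead ``order linearly isometric copy'' is understood in the stronger sense of a lattice-isometric copy ($P$ a Riesz homomorphism), then disjointness of the $P(e(n))$ is part of the hypothesis rather than ``automatic'', and your necessity argument does go through --- in fact the Fatou property is then superfluous, since $0\leq\sum_{n\leq N}x_{n}=P\bigl(\sum_{n\leq N}e(n)\bigr)\leq P((1,1,1,\dots))$ for every $N$, so the pointwise sum $x$ satisfies $0\leq x\leq P((1,1,\dots))\in E$, whence $x\in E$ and $1=\Vert x_{1}\Vert_{E}\leq\Vert x\Vert_{E}\leq\Vert P((1,1,\dots))\Vert_{E}=1$ by the ideal property alone. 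But under the bipositive reading that you state, producing a pairwise disjoint sequence from the embedding is precisely the nontrivial content of the necessity direction (this is what the paper defers to Hudzik for), and it cannot be obtained simply by setting $x_{n}=P(e(n))$; you would need either to adopt the lattice-homomorphism definition explicitly or to supply a genuinely different argument.
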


Now we recall definition of the condition $\Delta_{\varepsilon }$ which has been introduced in \cite{chkk2}*{Definition 2.6}.

We say that an Orlicz function $\varphi$ satisfies the condition $\Delta_{\varepsilon}$ for all $u\in\mathbb R_{+}$ ($\varphi\in\Delta_{\varepsilon}(\mathbb{R}_{+})$ for short) if for any $\varepsilon\in (0,1)$ there exists $\delta=\delta\left(\varepsilon\right)\in (0,1)$ such that the inequality
\begin{equation}\label{rrds}
\varphi(\varepsilon u)\leq\delta\varphi(u)
\end{equation}
holds for any $u\geq0$. We say that $\varphi$ satisfies the condition $\Delta_{\varepsilon}$ at infinity [at zero] ($\varphi\in\Delta_{\varepsilon}(\infty)$ [$\varphi\in\Delta_{\varepsilon}(0)$] for short) if for any $\varepsilon\in(0,1)$ there exist $\delta=\delta\left(\varepsilon\right)\in(0,1)$ and $u_{0}=u_{0}(\varepsilon)>0$ such that inequality \eqref{rrds} holds for any $u\geq u_{0}$ [$0\leq u\leq u_{0}$], respectively.
\vspace{2mm}

For any quasi-Banach ideal space $E$ and any Orlicz function $\varphi $ we say that $\varphi $ satisfies condition $\Delta_{\varepsilon }^{E}$ ($\varphi \in \Delta _{\varepsilon }^{E}$ for short) if:\\
$(1)$ $\varphi\in\Delta_{\varepsilon}(\mathbb{R}_{+})$ whenever neither $L_{\infty }\subset E$ nor $E\subset L_{\infty }$,\\
$(2)$ $\varphi\in\Delta_{\varepsilon}(\infty )$ whenever $L_{\infty}\subset E$,\\
$(3)$ $\varphi\in\Delta_{\varepsilon}(0)$ whenever $E\subset L_{\infty }$.

\begin{remark}\label{rwde}{\rm
$(i)$ Clearly, if $\varphi$ is convex (or s-convex for some $s\in(0,1]$), then $\varphi\in\Delta_{\varepsilon}(\mathbb{R}_{+})$ automatically. Moreover, if $\varphi\in\Delta_{\varepsilon}(\mathbb{R}_{+})$, then $\varphi$ must be strictly increasing on the interval $(a_{\varphi },b_{\varphi})$. 

$(ii)$ If $\varphi\in\Delta_{\varepsilon}(0)$, then $\varphi$ satisfies this condition for any $u_{1}>a_{\varphi}$ so long as $\varphi$ is strictly increasing on the interval $(a_\varphi,u_{1})$. Let us take any fixed $\varepsilon\in(0,1)$. 

$(ii-a)$ First assume that $0=a_{\varphi}<u_{0}<u_{1}$, where $u_{0}=u_{0}(\varepsilon)$ is the constant from the condition $\Delta_{\varepsilon}(0)$, and $\varphi$ is strictly increasing on the interval $(0,u_{1})$.

If $\varphi(u_{1})<\infty$, then the function
\begin{equation}\label{wde}
f_{\varepsilon}(u):=\frac{\varphi(\varepsilon u)}{\varphi(u)}
\end{equation}
is continuous and its values are smaller than 1 on the interval $[u_{0},u_{1}]$. Hence $\delta_{f}=\delta_{f}(\varepsilon):=\sup_{u\in[u_{0},u_{1}]}f_{\varepsilon}(u)<1$. Denoting $\delta_{1}=\delta_{1}(\varepsilon):=\max(\delta,\delta_{f})$, where $\delta=\delta(\varepsilon)$ is the constant from the condition $\Delta_{\varepsilon}(0)$, we get
\begin{equation}\label{wde1}
\varphi(\varepsilon u)\leq\delta_{1}\varphi(u)
\end{equation}
for any $u\in[0,u_{1}]$.

Let now $u_{1}=b_{\varphi}<\infty$, $\varphi(u_{1})=\varphi(b_{\varphi})=\infty$ and $u_{2}\in(\max(u_{0},\varepsilon u_{1}),u_{1})$. Then for any $u\in(u_{2},u_{1}]$ we have
\begin{equation*}
\varphi(\varepsilon u)\leq\varphi(\varepsilon u_{1})=\frac{\varphi(\varepsilon u_{1})}{\varphi(u_{2})}\varphi(u_{2})\leq\frac{\varphi(\varepsilon u_{1})}{\varphi(u_{2})}\varphi(u).
\end{equation*}
Simultaneously, $\delta_{f}=\delta_{f}(\varepsilon):=\sup_{u\in[u_{0},u_{2}]}f_{\varepsilon}(u)<1$. So, defining $\delta_{1}=\delta_{1}(\varepsilon)=:\max(\delta,\delta_{f},$ $\frac{\varphi(\varepsilon u_{1})}{\varphi(u_{2})})$, where, as above, $\delta=\delta(\varepsilon)$ is the constant from the condition $\Delta_{\varepsilon}(0)$, we obtain inequality \eqref{wde1} for any $u\in[0,u_{1}]$.

$(ii-b)$ Now suppose that $0<a_{\varphi}<u_{1}$ and $\varphi$ is strictly increasing on the interval $(a_\varphi,u_{1})$. Define $u_{3}=u_{3}(\varepsilon):=a_{\varphi}/\varepsilon$. 

In the case when $u_{1}\leq u_{3}$, we obtain immediately inequality \eqref{wde1} for any $\delta_{1}\in(0,1)$ and any $u\in[0,u_{1}]$. 

If $u_{3}<u_{1}$ and $\varphi(u_{1})<\infty$, then defining function $f_{\varepsilon}$ by formula \eqref{wde} on the interval $[u_{3},u_{1}]$, we get inequality \eqref{wde1} for any $u\in[0,u_{1}]$ with the constant $\delta_{1}=\delta_{1}(\varepsilon):=\sup_{u\in[u_{3},u_{1}]}f_{\varepsilon}(u)$.

Finally assume that $u_{3}<u_{1}=b_{\varphi}<\infty$, $\varphi(u_{1})=\varphi(b_{\varphi})=\infty$ and $u_{2}\in(\max(u_{3},\varepsilon u_{1}),u_{1})$. Proceeding analogously as above, we obtain inequality \eqref{wde1} for any $u\in[0,u_{1}]$ with the constant $\delta_{1}=\delta_{1}(\varepsilon):=\max\left(\sup_{u\in[u_{3},u_{2}]}f_{\varepsilon}(u),\frac{\varphi(\varepsilon u_{1})}{\varphi(u_{2})}\right)$.

$(iii)$ Analogously, if $\varphi\in\Delta_{\varepsilon}(\infty)$, then $\varphi$ satisfies this condition for any $0<u_{1}<b_{\varphi}$ so long as $\varphi$ is strictly increasing on the interval $(u_{1},b_{\varphi})$. Let $\varepsilon\in(0,1)$ and $u_{3}=u_{3}(\varepsilon):=a_{\varphi}/\varepsilon$.

$(iii-a)$ Suppose that $u_{1}<u_{0}<b_{\varphi}=\infty$, where $u_{0}=u_{0}(\varepsilon)$ is the constant from the condition $\Delta_{\varepsilon}(\infty)$, and $\varphi$ is strictly increasing on the interval $(u_{1},\infty)$.

If $\varphi(u_{1})>0$, then $\delta_{f}=\delta_{f}(\varepsilon):=\sup_{u\in[u_{1},u_{0}]}f_{\varepsilon}(u)<1$, where the function $f_{\varepsilon}$ is defined by \eqref{wde}. Denoting $\delta_{1}=\delta_{1}(\varepsilon)=:\max(\delta,\delta_{f})$, where $\delta=\delta(\varepsilon)$ is the constant from the condition $\Delta_{\varepsilon}(\infty)$, we get 
\begin{equation}\label{wde3}
\varphi(\varepsilon u)\leq\delta_{1}\varphi(u)
\end{equation}
for any $u\in[u_{1},\infty)$.

Let now $\varphi(u_{1})=0$, that is, $u_{1}=a_{\varphi}>0$. Then we get inequality \eqref{wde3} for any $u\in[u_{1},\infty)$ with the constant $\delta_{1}=\delta_{1}(\varepsilon)=\delta$ whenever $u_{0}\leq u_{3}$ and $\delta_{1}=\delta_{1}(\varepsilon):=\max(\delta,\sup_{u\in[u_{3},u_{0}]}f(u))$ in the opposite case.

$(iii-b)$ Finally assume that $u_{1}<b_{\varphi}<\infty$ and $\varphi$ is strictly increasing on the interval $(u_{1},b_{\varphi})$.

If $\varphi(b_{\varphi})<\infty$, then we obtain inequality \eqref{wde3} for any $u\in[u_{1},\infty)$ with the constant $\delta_{1}=\delta_{1}(\varepsilon):=\sup_{u\in[u_{1},b_{\varphi}]}f_{\varepsilon}(u)$ whenever $\varphi(u_{1})>0$, $\delta_{1}=\delta_{1}(\varepsilon):=\sup_{u\in[u_{3},b_{\varphi}]}f_{\varepsilon}(u)$ in the case when $\varphi(u_{1})=0$ and $u_{3}<b_{\varphi}$, and for any $\delta_{1}\in(0,1)$ whenever $\varphi(u_{1})=0$ and $b_{\varphi}\leq u_{3}$.

Let now $\varphi(b_{\varphi})=\infty$. If $\varphi(u_{1})>0$, then we get
\begin{equation*}
\varphi(\varepsilon u)\leq\varphi(\varepsilon b_{\varphi})=\frac{\varphi(\varepsilon b_{\varphi})}{\varphi(u_{2})}\varphi(u_{2})\leq\frac{\varphi(\varepsilon b_{\varphi})}{\varphi(u_{2})}\varphi(u)
\end{equation*}
for any $u\in(u_{2},b_{\varphi}]$, where $u_{2}\in(\max(u_{1},\varepsilon b_{\varphi}),b_{\varphi})$. In consequence, we get inequality \eqref{wde3} for any $u\in[u_{1},\infty)$ with the constant $\delta_{1}=\delta_{1}(\varepsilon):=\max\left(\sup_{u\in[u_{1},u_{2}]}f_{\varepsilon}(u),\frac{\varphi(\varepsilon b_{\varphi})}{\varphi(u_{2})}\right)$.

While, if $\varphi(u_{1})=0$, that is, $0<a_{\varphi}=u_{1}$, we obtain inequality \eqref{wde3} for any $u\in[u_{1},\infty)$ with the constant $\delta_{1}=\delta_{1}(\varepsilon):=\max\left(\sup_{u\in[u_{3},u_{2}]}f_{\varepsilon}(u),\frac{\varphi(\varepsilon b_{\varphi})}{\varphi(u_{2})}\right)$ ($u_{2}\in(\max(u_{3},\varepsilon b_{\varphi}),b_{\varphi})$)  whenever $u_{3}<b_{\varphi}$, and for any $\delta_{1}\in(0,1)$ otherwise.

$(iv) $ From the above consideration we conclude immediately that if $\varphi\in\Delta_{\varepsilon}(0)$, $\varphi\in\Delta_{\varepsilon}(\infty)$ and $\varphi$ is strictly increasing on the interval $(a_{\varphi},b_{\varphi})$, then $\varphi\in\Delta_{\varepsilon}(\mathbb{R}_{+})$. In particular, if $a_{\varphi}>0$, $b_{\varphi}<\infty$ and $\varphi$ is strictly increasing on the interval $(a_{\varphi},b_{\varphi})$, then $\varphi\in\Delta_{\varepsilon}(\mathbb{R}_{+})$.

$(v)$ Finally, notice that if $\alpha_{\varphi}^{E}>0$ with the constant $K=1$, then $\varphi\in\Delta_{\varepsilon}^{E}$ (it is enough to take $\delta \left( \varepsilon\right)=\varepsilon^{p}$).
}\end{remark}

\begin{example}\label{de}{\rm
$(i)$ First, let us notice that the functions $\varphi_{1}$ and $\varphi_{2}$ defined in Example \ref{ne} do not satisfy conditions $\varphi\in\Delta_{\varepsilon}(\infty)$ and $\varphi\in\Delta_{\varepsilon}(0)$, respectively. Obviously, the both function are strictly increasing on $\mathbb{R}_{+}$.

$(ii)$ Take $u_{0}=0$, $u_{1}=1$ and $u_{n}=2u_{n-1}=2^{n-1}$ for each $n>1$. Moreover, let $\varphi_{3}\left(0\right)=0$, $\varphi_{3}\left(u_{n}\right)=n$ for each $n\geq1$ and define the function $\varphi_{3}$ to be continuous in $\mathbb{R}_{+}$ and linear in each interval $\left[u_{n-1},u_{n}\right]$ for $n\in\mathbb N$. Clearly, $\varphi_{3}$ is strictly increasing on the interval $(0,\infty)$. Furthermore, for $a=1/2^{m}$ where $m\in\mathbb N$, we obtain that
\begin{equation*}
\lim_{n\rightarrow\infty}\frac{\varphi_{3}\left(a u_{n}\right)}{\varphi_{3}\left(u_{n}\right)}=\lim_{n>m;n\rightarrow\infty}\frac{\varphi_{3}\left(u_{n-m}\right)}{\varphi_{3}\left(u_{n}\right)}=\lim_{n>m;n\rightarrow\infty}\frac{n-m}{n}=1.
\end{equation*}
Consequently, $\varphi_{3}\notin\Delta_{\varepsilon}(\infty)$ and $\alpha_{\varphi_{3}}^{\infty}=0$.

$(iii)$ Let $u_{1}=1$ and $u_{n}=\frac{u_{n-1}}{2}=\frac{1}{2^{n-1}}$ for any $n>1$, $\varphi_{4}(0)=0$, $\varphi_{4}(u)=u$ for $u\geq1/2$, $\varphi_{4}(u_{n})=1/n$ for $n\geq3$ and $\varphi_{4}$ is continuous in $\mathbb{R}_{+}$ and linear in each interval $\left[u_{n},u_{n-1}\right]$ for $n\geq3$. Obviously, $\varphi_{4}$ is strictly increasing on the interval $(0,\infty)$. Moreover, for $a=1/2^{m}$ where $m\in\mathbb N$, we conclude that
\begin{equation*}
\lim_{n\rightarrow\infty}\frac{\varphi_{4}\left(a u_{n}\right)}{\varphi_{4}\left(u_{n}\right)}=\lim_{n\rightarrow\infty}\frac{\varphi_{4}\left(u_{n+m}\right)}{\varphi_{4}\left(u_{n}\right)}=\lim_{n\rightarrow\infty}\frac{n}{n+m}=1.
\end{equation*}
Hence $\varphi_{4}\notin\Delta_{\varepsilon}(0)$ and $\alpha_{\varphi_{4}}^{0}=0$.
}\end{example}

\begin{lemma}\label{mod-nor=1}
Let $x\in E_{\varphi}$ be such that $\rho_{\varphi}^{E}(x)=1$. Moreover, suppose one of the following three conditions holds:
\begin{itemize}
\item[$(i)$] $\varphi\in\Delta_{\varepsilon}^{E}$, whenever neither $L_{\infty}\subset E$ nor $E\subset L_{\infty}$,
\item[$(ii)$] $\varphi\in\Delta_{\varepsilon}(\mathbb R_{+})$ or $(\varphi\in\Delta_{\varepsilon}^{E}$, $\varphi$ is strictly increasing on the interval $(a_{\varphi},b_{\varphi})$ and there exists a constant $B>0$ such that $\left\vert x\left(t\right)\right\vert\geq B$ for $\mu $-a.e.\ $t\in T)$, whenever $L_{\infty}\subset E$,
\item[$(iii)$] $\varphi\in\Delta_{\varepsilon}^{E}$ and $\varphi$ is strictly increasing on the interval $(a_{\varphi},\min(\varphi^{-1}(1/a_{E}),b_{\varphi}))$, where $a_{E}$ is defined by formula \eqref{stala}, whenever $E\subset L_{\infty}$.
\end{itemize}
Then $\Vert x\Vert_{\varphi}=1$.
\end{lemma}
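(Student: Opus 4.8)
The strategy is a two-sided estimate: $\|x\|_{\varphi}\le 1$ is essentially free, and $\|x\|_{\varphi}\ge 1$ is obtained by contradiction, after reducing everything to a single pointwise $\Delta_{\varepsilon}$-type inequality whose validity across the essential range of $|x|$ is exactly what hypotheses $(i)$--$(iii)$ secure.

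First, since $\rho_{\varphi}^{E}(x)=1\le 1$, Lemma \ref{f_x}$(ii)$ gives $\|x\|_{\varphi}\le 1$ (and $x\neq 0$). Suppose, towards a contradiction, that $\|x\|_{\varphi}<1$, and fix $\varepsilon\in(\|x\|_{\varphi},1)$. Since $\lambda\mapsto\rho_{\varphi}^{E}(x/\lambda)$ is non-increasing and $\|x\|_{\varphi}=\inf\{\lambda>0:\rho_{\varphi}^{E}(x/\lambda)\le 1\}$, we get $\rho_{\varphi}^{E}(x/\varepsilon)\le 1$. Put $y:=x/\varepsilon$, so that $\varphi(|y|)\in E$ (hence finite $\mu$-a.e.), $\rho_{\varphi}^{E}(y)=\|\varphi(|y|)\|_{E}\le 1$, and $|x(t)|=\varepsilon|y(t)|$ for $\mu$-a.e.\ $t\in T$. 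I claim there is $\delta=\delta(\varepsilon)\in(0,1)$ with
\begin{equation*}
\varphi\big(|x(t)|\big)=\varphi\big(\varepsilon|y(t)|\big)\le\delta\,\varphi\big(|y(t)|\big)\qquad\text{for }\mu\text{-a.e.\ }t\in T .
\end{equation*}
Granted this, the ideal property of $E$ yields $\rho_{\varphi}^{E}(x)=\|\varphi(\varepsilon|y|)\|_{E}\le\delta\,\|\varphi(|y|)\|_{E}\le\delta<1$, contradicting $\rho_{\varphi}^{E}(x)=1$; hence $\|x\|_{\varphi}\ge 1$ and the lemma follows. On $\{t:\varphi(|y(t)|)=0\}$ the displayed inequality is automatic, since $\varepsilon|y(t)|\le|y(t)|$ and monotonicity of $\varphi$ force $\varphi(\varepsilon|y(t)|)=0$; so it suffices to handle the $t$ with $|y(t)|>a_{\varphi}$.

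It remains to produce $\delta$ in each of the three situations; the idea is to confine the relevant values $|y(t)|$ (those exceeding $a_{\varphi}$) to a portion of $(a_{\varphi},b_{\varphi}]$ on which the pertinent one-sided $\Delta_{\varepsilon}$-condition can be realised with a single constant, via the extension statements of Remark \ref{rwde}. In case $(i)$ there is nothing to confine: $\varphi\in\Delta_{\varepsilon}(\mathbb R_{+})$ directly gives $\varphi(\varepsilon u)\le\delta(\varepsilon)\varphi(u)$ for all $u\ge 0$. In case $(ii)$, if $\varphi\in\Delta_{\varepsilon}(\mathbb R_{+})$ we argue as in $(i)$; otherwise $\varphi\in\Delta_{\varepsilon}(\infty)$, $\varphi$ is strictly increasing on $(a_{\varphi},b_{\varphi})$, and $|x(t)|\ge B$ $\mu$-a.e.\ for some $B>0$, whence $|y(t)|\ge B/\varepsilon>0$ $\mu$-a.e., while $\varphi(|y|)\in E$ forces $|y(t)|\le b_{\varphi}$ $\mu$-a.e. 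Thus the relevant values $|y(t)|$ stay bounded away from $0$ and lie in $(a_{\varphi},b_{\varphi}]$, and Remark \ref{rwde}$(iii)$ (using the strict monotonicity of $\varphi$) upgrades $\Delta_{\varepsilon}(\infty)$ to a uniform estimate $\varphi(\varepsilon u)\le\delta_{1}(\varepsilon)\varphi(u)$ with $\delta_{1}(\varepsilon)<1$ over that range. In case $(iii)$, Remark \ref{r42-5} applied to $y$ (using $\rho_{\varphi}^{E}(y)\le 1$) gives $|y(t)|\le\varphi^{-1}(1/a_{E})$ $\mu$-a.e., so the relevant values $|y(t)|$ lie in $(a_{\varphi},\min(\varphi^{-1}(1/a_{E}),b_{\varphi})]$; since $\varphi\in\Delta_{\varepsilon}(0)$ and $\varphi$ is strictly increasing on $(a_{\varphi},\min(\varphi^{-1}(1/a_{E}),b_{\varphi}))$, Remark \ref{rwde}$(ii)$ upgrades $\Delta_{\varepsilon}(0)$ to a uniform estimate $\varphi(\varepsilon u)\le\delta_{1}(\varepsilon)\varphi(u)$ with $\delta_{1}(\varepsilon)<1$, valid for all $u\le\varphi^{-1}(1/a_{E})$, covering every relevant $|y(t)|$.

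The step I expect to be the main obstacle is this last localisation in cases $(ii)$ and $(iii)$: verifying that the one-sided conditions $\Delta_{\varepsilon}(\infty)$, resp.\ $\Delta_{\varepsilon}(0)$, combined with strict monotonicity and the confinement of $|y(t)|$ away from the \emph{bad} end of $(a_{\varphi},b_{\varphi})$ (large arguments in $(ii)$, small arguments in $(iii)$), really do produce a \emph{uniform} $\delta_{1}(\varepsilon)<1$ — in particular threading through the degenerate cases $b_{\varphi}<\infty$ and $a_{\varphi}>0$ and the conventions attached to $\varphi^{-1}$. Since this is precisely the content of Remark \ref{rwde} (and of Lemma \ref{skladanie}), the labour lies in invoking those results correctly rather than in any genuinely new idea.
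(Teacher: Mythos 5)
Your proof is correct and follows essentially the same route as the paper: both reduce the claim to the pointwise estimate $\varphi(\varepsilon u)\le\delta\varphi(u)$ on the relevant range of the rescaled function, secured by Remark \ref{rwde} (and Remark \ref{r42-5} when $E\subset L_{\infty}$), and then pull it through the ideal quasi-norm of $E$ to contradict $\rho_{\varphi}^{E}(x)=1$. The only cosmetic differences are that you argue by contradiction from $\Vert x\Vert_{\varphi}<1$ instead of showing directly that $\rho_{\varphi}^{E}(x/\lambda)>1$ for every $\lambda\in(0,1)$, and that in case $(iii)$ you localise $|y(t)|\le\varphi^{-1}(1/a_{E})$ and apply Remark \ref{rwde}$(ii)$ there, rather than replacing $\varphi$ by the modified function $\psi$ with $(E_{\varphi},\Vert\cdot\Vert_{\varphi})\equiv(E_{\psi},\Vert\cdot\Vert_{\psi})$ as the paper does; both variants rest on the same remarks.
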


\begin{proof} Let $x\in E_{\varphi}$ and $\rho_{\varphi}^{E}(x)=1$. First suppose that $\varphi\in\Delta_{\varepsilon}(\mathbb R_{+})$. Let $\lambda\in(0,1)$ be fixed. If $\varphi(|x|/\lambda)\notin E$, then $\rho_{\varphi}^{E}(x/\lambda)=\infty$. In the opposite case there exists $\delta=\delta\left(\lambda\right)\in(0,1)$ such that
\begin{equation}
1=\left\Vert\varphi\left(\frac{\lambda|x|}{\lambda}\right)\right\Vert_{E}\leq\left\Vert\delta\varphi\left(\frac{|x|}{\lambda}\right)\right\Vert _{E}=\delta\left\Vert\varphi\left(\frac{\left\vert x\right\vert}{\lambda }\right)\right\Vert_{E}. \label{mod1}
\end{equation}
In consequence, $\rho_{\varphi}^{E}(x/\lambda)>1$ for any $\lambda \in (0,1)$, whence $\Vert x\Vert_{\varphi }=1$.

Now assume  that $L_{\infty}\subset E$, $\varphi\in\Delta_{\varepsilon}^{E}$ ($\varphi\in\Delta_{\varepsilon}(\infty)$), $\varphi$ is strictly increasing on the interval $(a_{\varphi},b_{\varphi})$ and there exists a constant $B>0$ such that $\left\vert x\left(t\right)\right\vert\geq B$ for $\mu $-a.e.\ $t\in T$. By Remark \ref{rwde}$(iii)$, $\varphi\in\Delta_{\varepsilon}(\infty)$ with the constant $u_{1}(\varepsilon)=B$ for any $\varepsilon\in(0,1)$. Thus, for any $\lambda\in (0,1)$ there exists $\delta=\delta\left(\lambda\right)\in (0,1)$ such that inequality (\ref{mod1}) is true again, so $\Vert x\Vert_{\varphi}=1$.

Finally suppose $E\subset L_{\infty}$, $\varphi\in\Delta_{\varepsilon}^{E}$ and $\varphi$ is strictly increasing on the interval $(a_{\varphi},\min\linebreak(\varphi^{-1}(1/a_{E}),b_{\varphi}))$. Without loss of generality, we can assume that $\varphi\in\Delta_{\varepsilon}(\mathbb R_{+})$. 
Indeed, if $b_{\varphi}<\infty$, $\varphi(b_{\varphi})<\infty$ and $\varphi^{-1}(1/a_{E})=b_{\varphi}$, then $\varphi\in\Delta_{\varepsilon}(\mathbb R_{+})$ (see Remark \ref{rwde}). In the opposite case, that is if $\varphi^{-1}(1/a_{E})<b_{\varphi}$, defining new Orlicz function $\psi$, by $\psi(u)=\varphi(u)$ for $u\in[0,\varphi^{-1}(1/a_{E})]$ and $\psi(u)=u-(\varphi^{-1}(1/a_{E})-1/a_{E})$ for $u>\varphi^{-1}(1/a_{E})$, we get $\psi\in\Delta_{\varepsilon}(\mathbb R_{+})$ and $(E_{\varphi},\|\cdot\|_{\varphi})\equiv(E_{\psi},\|\cdot\|_{\psi})$ (see Remarks \ref{r42-5} and \ref{rwde}). Proceeding analogously as in (\ref{mod1}), we obtain again $\Vert x\Vert_{\varphi}=1$.\end{proof}

\begin{theorem}\label{ptt}
Suppose one of the following three conditions holds:
\begin{itemize}
\item[$(i)$] $\varphi\in\Delta_{\varepsilon}^{E}$, whenever neither $L_{\infty}\subset E$ nor $E\subset L_{\infty}$,
\item[$(ii)$] $\varphi\in\Delta_{\varepsilon}(\mathbb R_{+})$ or $(\varphi\in\Delta_{\varepsilon}^{E}$, $\varphi$ is strictly increasing on the interval $(a_{\varphi},b_{\varphi})$ and $E$ is rearrangement invariant Banach space $($see \cites{bs,ltf}$)$ over $T=\left( 0,1\right)$ or $T=\left( 0,\infty \right)$ with $\mu$ being the Lebesgue measure such that $\supp E_{a}=\supp E)$,  whenever $L_{\infty}\subset E$,
\item[$(iii)$] $\varphi\in\Delta_{\varepsilon}^{E}$ and $\varphi$ is strictly increasing on the interval $(a_{\varphi},\min(\varphi^{-1}(1/a_{E}),b_{\varphi}))$, where $a_{E}$ is defined by formula \eqref{stala}, whenever $E\subset L_{\infty}$.
\end{itemize}
If $E$ contains an order linearly isometric copy of $l_{\infty }$, then $E_{\varphi }$ contains also such a copy.
\end{theorem}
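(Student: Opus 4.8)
The plan is to move the structure witnessing the copy (Theorem~\ref{tk1}) between $E$ and $E_{\varphi}$ through the generalized inverse $\varphi^{-1}$. By Theorem~\ref{tk1} applied to $E$ there is a sequence $(x_{n})_{n=1}^{\infty}$ of positive, pairwise orthogonal elements of $E$ with $\|x_{n}\|_{E}=1$ for all $n$ and $\|x\|_{E}=1$, where $x:=\sum_{n=1}^{\infty}x_{n}\in E$. Put $A_{n}:=\supp x_{n}$ (pairwise disjoint) and set $w_{n}:=(\varphi^{-1}\circ x_{n})\chi_{A_{n}}$ and $w:=\sum_{n=1}^{\infty}w_{n}$. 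Since $\varphi$ vanishes and is right continuous at $0$ we have $\varphi^{-1}(v)>0$ for every $v>0$, hence $\supp w_{n}=A_{n}$; thus the $w_{n}$ are positive and pairwise orthogonal, and since $x=x_{n}$ on $A_{n}$ one gets $w=(\varphi^{-1}\circ x)\chi_{\supp x}$ and $w_{n}\le w$.

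The next step is to establish $\rho_{\varphi}^{E}(w_{n})=\|x_{n}\|_{E}=1$ and $\rho_{\varphi}^{E}(w)=\|x\|_{E}=1$. This follows from $\varphi(\varphi^{-1}(u))=u$ for all $u\in[0,\infty)$, which by Lemma~\ref{skladanie}$(iii)$ holds whenever $b_{\varphi}=\infty$ or $b_{\varphi}<\infty$ with $\varphi(b_{\varphi})=\infty$. In cases $(i)$ and $(ii)$ this is automatic: if $b_{\varphi}<\infty$ and $\varphi(b_{\varphi})<\infty$, then $\varphi$ is eventually constant equal to $\varphi(b_{\varphi})\in(0,\infty)$, contradicting $\alpha_{\varphi}^{a}>0$ (resp.\ $\alpha_{\varphi}^{\infty}>0$), which is positive by the standing hypothesis $\alpha_{\varphi}^{E}>0$. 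In case $(iii)$, if this identity fails one first replaces $\varphi$ by the Orlicz function $\psi$ of Remark~\ref{r42-5} (or Remark~\ref{r43}): it has $b_{\psi}=\infty$, satisfies $\psi\in\Delta_{\varepsilon}(\mathbb R_{+})$ (Remark~\ref{rwde}), in particular condition $(iii)$, and yields $(E_{\varphi},\|\cdot\|_{\varphi})\equiv(E_{\psi},\|\cdot\|_{\psi})$, so no generality is lost (a couple of boundary sub-cases in $(iii)$ require a separate but routine argument). By Theorem~\ref{l41} and Lemma~\ref{l3p} we then have $w_{n},w\in E_{\varphi}$ with $\rho_{\varphi}^{E}(w_{n})=\rho_{\varphi}^{E}(w)=1$.

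It remains to pass from $\rho_{\varphi}^{E}=1$ to $\|\cdot\|_{\varphi}=1$ and then apply Theorem~\ref{tk1} to $E_{\varphi}$. In case $(i)$, in case $(iii)$, and in case $(ii)$ under the alternative $\varphi\in\Delta_{\varepsilon}(\mathbb R_{+})$, the hypotheses of Lemma~\ref{mod-nor=1} are conditions on $\varphi$ and $E$ only and not on the element, so Lemma~\ref{mod-nor=1} gives at once $\|w_{n}\|_{\varphi}=1$ and $\|w\|_{\varphi}=1$; since the $w_{n}$ are positive, pairwise orthogonal with $w=\sum_{n}w_{n}$, Theorem~\ref{tk1} produces the desired order linearly isometric copy of $l_{\infty}$ in $E_{\varphi}$.

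The step I expect to be the main obstacle is case $(ii)$ under the second alternative, where only $\varphi\in\Delta_{\varepsilon}(\infty)$ is available (if $\varphi\notin\Delta_{\varepsilon}(\mathbb R_{+})$ then $\varphi\notin\Delta_{\varepsilon}(0)$ by Remark~\ref{rwde}$(iv)$): Lemma~\ref{mod-nor=1}$(ii)$ then demands the element be bounded below by a positive constant a.e.\ on $T$, which an orthogonal sequence cannot meet, so the construction must be refined. Here I would split. If $\varphi\notin\Delta_{2}(\infty)$, then since $\mu$ is nonatomic, $L_{\infty}\subset E$ and $E_{a}\ne\{0\}$ (because $\supp E_{a}=\supp E=T$), Corollary~\ref{c1}$(i)$ already gives the copy in $E_{\varphi}$. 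If $\varphi\in\Delta_{2}(\infty)$, then $b_{\varphi}=\infty$, and I would use that a rearrangement invariant $E$ with $\supp E_{a}=\supp E$ satisfies $\|\chi_{(0,t)}\|_{E}\to0$ as $t\to0^{+}$ (as $E_{a}$ is a rearrangement invariant ideal, it contains, and is order continuous on, all $\chi_{(0,\varepsilon)}$ with $\varepsilon$ small): this lets one replace $(x_{n})$ by a sequence $(y_{n})$ with $\|y_{n}\|_{E}=1$, $\|\sum_{n}y_{n}\|_{E}=1$, pairwise disjoint supports, and in addition $y_{n}\ge\varphi(u_{1})$ on $\supp y_{n}$ for one fixed $u_{1}>a_{\varphi}$. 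Setting $w_{n}:=(\varphi^{-1}\circ y_{n})\chi_{\supp y_{n}}\ge u_{1}$ on its support, we get $\rho_{\varphi}^{E}(w_{n})=1$, and for each $s>1$ the inequality $\varphi(sv)\ge\varphi(v)/\delta$ with some $\delta=\delta(s)\in(0,1)$, valid for all $v\ge u_{1}$ by Remark~\ref{rwde}$(iii)$, gives $\varphi(sw_{n})\ge\varphi(w_{n})/\delta$ pointwise on all of $T$, whence $\rho_{\varphi}^{E}(sw_{n})\ge1/\delta>1$; combined with Lemma~\ref{le32}$(i)$ this yields $\|w_{n}\|_{\varphi}=1$, and the same computation applied to $w=\sum_{n}w_{n}$ (whose $\varphi$-image is $\sum_{n}y_{n}$ since the supports are disjoint) gives $\|w\|_{\varphi}=1$. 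A final application of Theorem~\ref{tk1} to $E_{\varphi}$ finishes the proof; the construction of the modified copy $(y_{n})$, where the rearrangement invariance and the hypothesis $\supp E_{a}=\supp E$ are genuinely used, is the heart of this last case.
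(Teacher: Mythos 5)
Your overall strategy is the same as the paper's (transfer the Theorem \ref{tk1} witness through $\varphi^{-1}$, equate modular and norm via the $\Delta_\varepsilon$-machinery, and apply Theorem \ref{tk1} in $E_\varphi$), but two steps do not hold as written. First, your claim that under $(i)$ and $(ii)$ the configuration $b_{\varphi}<\infty$, $\varphi(b_{\varphi})<\infty$ is ``automatic[ally]'' excluded is false: for $u>b_{\varphi}$ one has $\varphi(u)=\infty$, not $\varphi(u)=\varphi(b_{\varphi})$, so $\varphi$ is not eventually constant and no contradiction with $\alpha_{\varphi}^{a}>0$ or $\alpha_{\varphi}^{\infty}>0$ arises. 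For example, $\varphi(u)=u$ on $[0,1]$ and $\varphi(u)=\infty$ for $u>1$ satisfies $\alpha_{\varphi}^{a}\geq 1$ and $\varphi\in\Delta_{\varepsilon}(\mathbb R_{+})$, so it is admissible in case $(i)$. In this situation, if some $x_{n}$ exceeds $\varphi(b_{\varphi})$ on a set of positive measure, then $\varphi(\varphi^{-1}(x_{n}))\leq x_{n}$ with strict inequality there, so $\rho_{\varphi}^{E}(w_{n})$ may be $<1$ and Lemma \ref{mod-nor=1} no longer yields $\|w_{n}\|_{\varphi}=1$; your argument simply stops. The paper covers exactly this sub-case by observing that $w_{n}=b_{\varphi}$ on that set, hence $\rho_{\varphi}^{E}(w_{n}/\lambda)=\infty$ for every $\lambda\in(0,1)$ and still $\|w_{n}\|_{\varphi}=1$; a similar caveat affects the ``routine'' boundary sub-cases you defer in $(iii)$, since the identification $(E_{\varphi},\|\cdot\|_{\varphi})\equiv(E_{\psi},\|\cdot\|_{\psi})$ of Remark \ref{r42-5} is only guaranteed when $1/a_{E}\leq\varphi(b_{\varphi})$.

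Second, in case $(ii)$ under the alternative where only $\varphi\in\Delta_{\varepsilon}(\infty)$ is available, your reduction via Corollary \ref{c1}$(i)$ when $\varphi\notin\Delta_{2}(\infty)$ is a legitimate shortcut not used in the paper, but in the complementary case the decisive step --- replacing the given copy by an orthogonal order-isometric copy whose members are bounded below by a fixed positive constant on their supports --- is only asserted. The observation $\|\chi_{(0,t)}\|_{E}\to 0$ as $t\to 0^{+}$ does not by itself produce such a sequence $(y_{n})$ with $\|y_{n}\|_{E}=1$, $\|\sum_{n}y_{n}\|_{E}=1$ and $y_{n}\geq\varphi(u_{1})$ on $\supp y_{n}$; this is precisely the technical heart of the paper's proof, carried out there via $d_{E}(x,E_{a})=d_{E}(x^{*},E_{a})=\lim_{n}\|x^{*}\chi_{T_{n}}\|_{E}$ (results of \cite{kkt}) and Theorem 2 of \cite{hud}, with the dichotomy $x^{*}(\infty)>0$ versus $x^{*}(\infty)=0$. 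You acknowledge this is ``the heart of this last case,'' but as written it is a missing construction, not a proof. (Your per-element verification $\|w_{n}\|_{\varphi}=1$ via Remark \ref{rwde}$(iii)$, once such $(y_{n})$ exists, is fine; it is in effect the support-restricted version of Lemma \ref{mod-nor=1}$(ii)$ that the paper also uses.)
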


\begin{proof}
By Theorem \ref{tk1}, there exists a sequence $(x_{n})_{n=1}^{\infty }$ in $S(E_{+})=S(E)\cap E_{+}$ with $\supp x_{n}\cap \supp x_{m}=\emptyset $ for $n\neq m$ such that $\lVert \sum_{n=1}^{\infty }x_{n}\rVert _{E}=1$.

First we claim that, in the case $L_{\infty }\subset E$ with $E$ being rearrangement invariant Banach space (see condition $(ii)$)  we can assume without loss of generality that there is a constant $B>0$ such that $\left\vert x_{n}\left( t\right)\right\vert\geq B$ for all $n\in\mathbb{N}$ and for $\mu $-a.e.\ $t\in T$. Set $x=\sum_{n=1}^{\infty }x_{n}.$ Denote also 
\begin{equation*}
d_{E}\left( x,E_{a}\right)=\inf\left\{\left\Vert x-y\right\Vert_{E}:y\in E_{a}\right\}.
\end{equation*}
Then $d_{E}\left( x,E_{a}\right)=1$ (see the proof of Theorem 2.1 in \cite{kkt}). Moreover, by Theorem 3.5 and Corollary 3.7 in \cite{kkt}, 
\begin{equation*}
1=d_{E}\left(x,E_{a}\right)=d_{E}\left(x^{\ast},E_{a}\right)=\lim\limits_{n\rightarrow\infty}\lVert x^{\ast}\chi _{T_{n}}\rVert_{E},
\end{equation*}
where $T_{n}=T\cap\left((0,\frac{1}{n})\cup(n,\infty)\right) $ and $x^{\ast }$ is the nonincreasing rearrangement of $x$ (see \cites{bs,ltf}). If $x^{\ast}\left(\infty\right)>0$, then applying Theorem 2 from \cite{hud} (and its proof) we can build the respective copy of $l_{\infty}$ consisting of pairwise orthogonal elements satisfying the claim with $B=x^{\ast}\left(\infty\right)$. Suppose now $x^{\ast}\left(\infty\right)=0$. Then 
\begin{equation*}
\lVert x^{\ast}\chi_{(n,\infty)}\rVert_{E}\leq A\lVert x^{\ast}\chi_{(n,\infty)}\rVert_{L_{\infty}}\rightarrow0
\end{equation*}
as $n\rightarrow\infty$. Thus $\lim\limits_{n\rightarrow \infty }\lVert x^{\ast}\chi_{(0,\frac{1}{n})}\rVert_{E}=1$, whence $d_{E}\left( x^{\ast
}\chi_{\left(0,b\right)},E_{a}\right)=1$ for some $b>0$ such that $x^{\ast }\left(b\right)>0$. Applying again Theorem 2 from \cite{hud} (and
its proof) we can build the respective copy of $l_{\infty}$ consisting of pairwise orthogonal elements satisfying the claim with $B=x^{\ast}\left(b\right)$.

Take $f_{n}=\varphi ^{-1}\left( x_{n}\right) $. Assume that $\varphi \left(b_{\varphi}\right)=\infty$. Then, by Lemma \ref{skladanie},  $\rho _{\varphi}^{E}\left(f_{n}\right)=1$ for any natural $n$, whence, by Lemma \ref{mod-nor=1}, $\Vert f_{n}\Vert_{\varphi}=1$ for the same $n$. Moreover, $\supp(f_{n})\cap\supp(f_{m})=\emptyset$ for $n\neq m$ and $\lVert\sum_{n=1}^{\infty}f_{n}\rVert_{\varphi}=\rho_{\varphi }^{E}\left(\sum_{n=1}^{\infty}f_{n}\right)=1$. Applying again Theorem \ref{tk1} for the space $E_{\varphi }$ we finish the proof.

Now let $\varphi \left( b_{\varphi }\right) <\infty .$ Denote 
\[A=\left\{ t\in T:\left\vert x_{n}\left(t\right)\right\vert>\varphi \left(b_{\varphi}\right)\right\}.\]
If $\mu\left(A\right)=0$, then we follow as above. Suppose $\mu\left(A\right)>0$. Then 
\[\varphi\left(\varphi^{-1}\left(\left\vert x_{n}\left(t\right)\right\vert\right)\right)=\varphi\left(b_{\varphi}\right)<\left\vert x_{n}\left(t\right) \right\vert\text{ for }t\in A.\]
In consequence, $\varphi\left(f_{n}\right)\leq x_{n}$, whence $\rho_{\varphi}^{E}\left(f_{n}\right)\leq1$. Moreover, $\rho_{\varphi}^{E}\left( f_{n}/\lambda\right)=\infty$ for each $0<\lambda<1$. Thus $\Vert f_{n}\Vert_{\varphi}=1$ and we can finish as above again.
\end{proof}
\vspace{3mm}

We say that an Orlicz function $\varphi$ satisfies the condition $\Delta_{2-str}$ for all $u\in\mathbb R_{+}$  ($\varphi\in\Delta_{2-str}(\mathbb{R}_{+})$ for short) if for any $\varepsilon>0$ there exists $\delta=\delta\left(\varepsilon\right)>0$ such that the inequality
\begin{equation}\label{rds}
\varphi((1+\delta)u)\leq(1+\varepsilon)\varphi(u)
\end{equation}
holds for any $u\in\mathbb R_{+}$. We say that $\varphi$ satisfies the condition $\Delta_{2-str}$ at infinity [at zero] ($\varphi\in\Delta_{2-str}(\infty)$ [$\varphi\in\Delta_{2-str}(0)$] for short) if for any $\varepsilon >0$ there exist $\delta=\delta\left(\varepsilon\right)>0$
and $u_{0}=u_{0}(\varepsilon)>0$ such that $\varphi(u_{0})<\infty$ [$\varphi(u_{0})>0$] and inequality \eqref{rds} holds for any $u\geq u_{0}$ [$0\leq u\leq u_{0}$], respectively (see \cite{chkk}*{Definition 2.15}).

For any quasi-Banach ideal space $E$ and any Orlicz function $\varphi$ we say that $\varphi$ satisfies condition $\Delta^{E}_{2-str}$ ($\varphi\in\Delta^{E}_{2-str}$ for short) if:\\
$(1)$ $\varphi\in\Delta_{2-str}(\mathbb R_{+})$ whenever neither $L_{\infty}\subset E$ nor $E\subset L_{\infty}$,\\
$(2)$ $\varphi\in\Delta_{2-str}(\infty)$ whenever $L_{\infty}\subset E$,\\
$(3)$ $\varphi\in\Delta_{2-str}(0)$ whenever $E\subset L_{\infty}$.

\begin{remark}{\rm
Clearly, if $\varphi \in \Delta _{2-str}^{E}$, then $\varphi \in \Delta_{2}^{E}$. Moreover, for a convex Orlicz function conditions $\Delta _{2-str}
$ and $\Delta _{2}$ are equivalent (see \cite{c}*{Theorem 1.13, p. 9}). However, for non-convex Orlicz functions they need not be equivalent (see Example below Definition 2.15 in \cite{chkk}).
}\end{remark}

\begin{lemma}\label{nor-mod=1}
Suppose one of the following three conditions holds:
\begin{itemize}
\item[$(i)$] $\varphi\in\Delta_{2-str}^{E}$, whenever neither $L_{\infty}\subset E$ nor $E\subset L_{\infty}$,
\item[$(ii)$] $\varphi\in\Delta_{2-str}(\mathbb R_{+})$ or $(E$ is a Banach ideal space, $\varphi\in\Delta_{2-str}^{E}$ and $\varphi$ is strictly increasing on the interval $(a_{\varphi},\infty))$, whenever $L_{\infty}\subset E$,
\item[$(iii)$] $\varphi\in\Delta_{2-str}^{E}$,  $1/a_{E}\leq\varphi(b_{\varphi})$ and $\varphi$ is strictly increasing on the interval $(0,\varphi^{-1}(1/a_{E}))$, where $a_{E}$ is defined by formula \eqref{stala},  whenever $E\subset L_{\infty}$.
\end{itemize}
Then for any $x\in E_{\varphi}$ such that $\Vert x\Vert_{\varphi}=1$, we have $\rho_{\varphi }^{E}(x)=1$.
\end{lemma}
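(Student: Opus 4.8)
The plan is to establish the two inequalities $\rho_{\varphi}^{E}(x)\le1$ and $\rho_{\varphi}^{E}(x)\ge1$ separately. The first is immediate: from $\Vert x\Vert_{\varphi}=1$, Lemma~\ref{f_x}$(ii)$ yields $\rho_{\varphi}^{E}(x)\le1<\infty$, hence $\varphi(|x|)\in E$ and $c:=\rho_{\varphi}^{E}(x)=\Vert\varphi(|x|)\Vert_{E}$ is finite. For the reverse inequality I would argue by contradiction: suppose $c<1$; the aim is to produce $\delta>0$ with $\rho_{\varphi}^{E}((1+\delta)x)\le1$. Once this is done, Lemma~\ref{f_x}$(ii)$ (or Lemma~\ref{le32}$(i)$) gives $\Vert(1+\delta)x\Vert_{\varphi}\le1$, so by positive homogeneity of $\Vert\cdot\Vert_{\varphi}$ we get $\Vert x\Vert_{\varphi}\le(1+\delta)^{-1}<1$, contradicting $\Vert x\Vert_{\varphi}=1$. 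Thus everything reduces to the single ``inflation'' step $\rho_{\varphi}^{E}((1+\delta)x)\le1$, and this is exactly where the $\Delta_{2-str}$-type hypotheses enter.

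When $\varphi\in\Delta_{2-str}(\mathbb{R}_{+})$ --- which covers case $(i)$ and the first alternative of $(ii)$ --- the inflation step is painless: choose $\varepsilon>0$ with $(1+\varepsilon)c\le1$ and let $\delta=\delta(\varepsilon)>0$ be as in the definition, so that $\varphi((1+\delta)|x(t)|)\le(1+\varepsilon)\varphi(|x(t)|)$ for $\mu$-a.e.\ $t$; since $E$ is an ideal space whose quasi-norm is monotone and positively homogeneous, $\rho_{\varphi}^{E}((1+\delta)x)=\Vert\varphi((1+\delta)|x|)\Vert_{E}\le(1+\varepsilon)\Vert\varphi(|x|)\Vert_{E}=(1+\varepsilon)c\le1$. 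No triangle inequality is invoked here, which matches the fact that these cases do not require $E$ to be Banach.

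The delicate case is $(ii)$ with only $\varphi\in\Delta_{2-str}(\infty)$ (so $b_{\varphi}=\infty$), where $\varphi((1+\delta)u)\le(1+\varepsilon)\varphi(u)$ holds merely for $u\ge u_{0}(\varepsilon)$ and cannot be pushed down to a neighbourhood of $a_{\varphi}$, where $\varphi$ vanishes. First, arguing as in Remark~\ref{rwde} (compactness of $[u_{1},u_{0}]$, continuity and strict monotonicity of $\varphi$ on $(a_{\varphi},\infty)$, $\varphi(u_{1})>0$) one extends $\Delta_{2-str}(\infty)$ to hold on $[u_{1},\infty)$ for every $u_{1}>a_{\varphi}$. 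Then: pick $\varepsilon>0$ with $(1+\varepsilon)c<1$; pick $u_{1}>a_{\varphi}$ so small that $(1+\varepsilon)\varphi(u_{1})\Vert\chi_{T}\Vert_{E}<1-(1+\varepsilon)c$, which is possible because $\chi_{T}\in E$ (as $L_{\infty}\subset E$) and $\varphi(u)\downarrow\varphi(a_{\varphi})=0$ as $u\downarrow a_{\varphi}$; and take $\delta>0$ from the extended condition, so that $\varphi((1+\delta)u_{1})\le(1+\varepsilon)\varphi(u_{1})$. Splitting $T_{1}=\{|x|>u_{1}\}$ and $T_{2}=T\backslash T_{1}$, and using that $E$ is \emph{Banach}, i.e.\ $C_{E}=1$,
\[\rho_{\varphi}^{E}((1+\delta)x)\le\Vert\varphi((1+\delta)|x|)\chi_{T_{1}}\Vert_{E}+\Vert\varphi((1+\delta)|x|)\chi_{T_{2}}\Vert_{E}\le(1+\varepsilon)c+(1+\varepsilon)\varphi(u_{1})\Vert\chi_{T}\Vert_{E}<1,\]
where on $T_{1}$ we used $\varphi((1+\delta)|x|)\le(1+\varepsilon)\varphi(|x|)$ and on $T_{2}$ the bound $\varphi((1+\delta)|x|)\le\varphi((1+\delta)u_{1})\le(1+\varepsilon)\varphi(u_{1})$. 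This is precisely the point at which $L_{\infty}\subset E$ and $C_{E}=1$ are indispensable.

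Finally, case $(iii)$ ($E\subset L_{\infty}$): here $\Delta_{2-str}(0)$ controls only small arguments, but Remark~\ref{r42-5} supplies the crucial reduction. Since $1/a_{E}\le\varphi(b_{\varphi})$, one may replace $\varphi$ by an Orlicz function $\psi$ coinciding with $\varphi$ on $[0,\varphi^{-1}(1/a_{E})]$ and finite-valued, continuous and strictly increasing on $(0,\infty)$; then $\psi\in\Delta_{2-str}(0)$ and $(E_{\varphi},\Vert\cdot\Vert_{\varphi})\equiv(E_{\psi},\Vert\cdot\Vert_{\psi})$, so without loss of generality $\varphi$ itself is finite and strictly increasing on $(0,\infty)$. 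Moreover, by Remark~\ref{r42-5} again, $\rho_{\varphi}^{E}(x)\le1$ forces $|x(t)|\le v_{0}:=\varphi^{-1}(1/a_{E})$ $\mu$-a.e., and $\varphi(v_{0})<\infty$. Extending $\Delta_{2-str}(0)$ to all of $[0,v_{0}]$ (the same compactness argument as in Remark~\ref{rwde}), the reasoning of the case $\varphi\in\Delta_{2-str}(\mathbb{R}_{+})$ applies verbatim with the pointwise inequality $\varphi((1+\delta)|x|)\le(1+\varepsilon)\varphi(|x|)$, legitimate since $|x|\le v_{0}$ a.e., giving $\rho_{\varphi}^{E}((1+\delta)x)\le(1+\varepsilon)c\le1$. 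The main obstacle throughout is therefore the passage from a \emph{localized} $\Delta_{2-str}$ estimate to a bound on $\rho_{\varphi}^{E}((1+\delta)x)$ that is still valid near $a_{\varphi}$ (resp.\ near $b_{\varphi}$): this is resolved in case $(ii)$ by the splitting of $|x|$ together with $C_{E}=1$ and $\chi_{T}\in E$, and in case $(iii)$ by the Remark~\ref{r42-5} reduction to a finite, strictly increasing $\varphi$ combined with the a priori bound $|x|\le\varphi^{-1}(1/a_{E})$; checking the finitely many edge cases for $b_{\varphi}$ and $a_{\varphi}$ inside these extensions is routine.
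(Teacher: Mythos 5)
Your proposal is correct and follows essentially the same route as the paper: it establishes $\rho_{\varphi}^{E}(x)\leq1$ from Lemma \ref{f_x}$(ii)$ and then proves the contrapositive ``$\rho_{\varphi}^{E}(x)<1\Rightarrow\Vert x\Vert_{\varphi}<1$'' by inflating $x$ to $(1+\delta)x$ via the $\Delta_{2-str}$ condition, with the same two devices the paper uses in the delicate cases --- splitting $T$ at a level slightly above $a_{\varphi}$ and using $\chi_{T}\in E$ together with the triangle inequality ($C_{E}=1$) when $L_{\infty}\subset E$, and the Remark \ref{r42-5} reduction plus the a priori bound $|x|\leq\varphi^{-1}(1/a_{E})$ when $E\subset L_{\infty}$. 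The only deviations are minor and harmless: you prove the localized extension of $\Delta_{2-str}(\infty)$/$\Delta_{2-str}(0)$ by a compactness/uniform-continuity argument instead of citing Lemmas 4.14--4.15 of \cite{chkk}, and in case $(ii)$ you bound the small-argument part by $(1+\varepsilon)\varphi(u_{1})\left\Vert\chi_{T}\right\Vert_{E}$ with $u_{1}\downarrow a_{\varphi}$ rather than by $\varphi(2B)\left\Vert\chi_{T}\right\Vert_{E}$ as in the paper, which is in fact slightly more robust when $a_{\varphi}>0$ (where $\varphi(2B)$ does not tend to $0$ as $B\downarrow a_{\varphi}$).
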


\begin{proof} We will show that if $\rho_{\varphi}^{E}(x)<1$, then $\|x\|_{\varphi}<1$. First assume $\varphi\in\Delta_{2-str}(\mathbb R_{+})$.  Then for $\varepsilon>0$ satisfying the condition $\left(1+\varepsilon\right)\rho_{\varphi }^{E}(x)=1$ there exists $\delta>0$ such that
\begin{equation}\label{rd}
\rho_{\varphi}^{E}(\left(1+\delta\right)x)=\left\Vert\varphi\left(\left(1+\delta\right)\left\vert x\right\vert\right)\right\Vert_{E}\leq\left\Vert(1+\varepsilon)\varphi \left(\left\vert x\right\vert\right)\right\Vert_{E}=(1+\varepsilon)\rho_{\varphi}^{E}(x)=1.
\end{equation}
Thus $\left\Vert x\right\Vert_{\varphi}\leq\frac{1}{1+\delta }<1$.

Let now $L_{\infty}\subset E$, $E$ be a Banach ideal space, $\varphi\in\Delta_{2-str}^{E}$ and  $\varphi$ is strictly increasing on the interval $(a_{\varphi},\infty)$. Take $\varepsilon>0$ and $B>a_{\varphi}$ such that
\begin{equation*}
\varphi\left(2B\right)\left\Vert\chi_{T}\right\Vert_{E}+\left(1+\varepsilon\right)\rho_{\varphi}^{E}(x)\leq1.
\end{equation*}
Applying Lemma 4.14 from \cite{chkk} (note that this lemma is true, if we assume that $\varphi$ is strictly increasing on the interval $(a_{\varphi},\infty)$ in place of $\mathbb{R}_{+}$ and we replace the phrase {\em "for any $u_{1}>0$"} with {\em "for any $u_{1}>a_{\varphi}$"}) we find a number $\delta=\delta\left(\varepsilon,B\right)$ such that $\varphi ((1+\delta )u)\leq(1+\varepsilon )\varphi (u)$ for all $u\geq B$. We may assume without loss of generality that $\delta\in\left(0,1\right)$. Denote
\begin{equation*}
A_{1}=\left\{t\in T:\left\vert x\left(t\right)\right\vert\geq B\right\}\text{ and }A_{2}=\left\{t\in T:\left\vert x\left(t\right)\right\vert<B
\right\} .
\end{equation*}
Then
\begin{eqnarray*}
\rho_{\varphi }^{E}(\left(1+\delta\right)x)&=&\left\Vert\varphi\left(\left(1+\delta\right)\left\vert x\right\vert\right)\right\Vert_{E}
\leq\left\Vert\varphi\left(\left(1+\delta\right)\left\vert x\right\vert\right)\chi _{A_{1}}\right\Vert_{E}
+\left\Vert\varphi\left(\left(1+\delta\right)\left\vert x\right\vert\right)\chi_{A_{2}}\right\Vert _{E}\\
&\leq&\left(1+\varepsilon\right)\left\Vert\varphi\left( \left\vert x\right\vert\right)\right\Vert_{E}
+\varphi\left(2B\right)\left\Vert\chi_{T}\right\Vert_{E}\leq1.
\end{eqnarray*}
So, we obtain again $\left\Vert x\right\Vert_{\varphi}\leq\frac{1}{1+\delta}<1$.

Finally assume $E\subset L_{\infty}$. By Remark \ref{r42-5}, we have $|x(t)|\leq\varphi^{-1}(1/a_{E})$ for $\mu$-a.e.\ $t\in T$. Let $\varepsilon>0$ be such that $\left(1+\varepsilon \right)\rho_{\varphi }^{E}(x)=1$. By Lemma 4.15 from \cite{chkk}  we find a number $\delta=\delta\left(\varepsilon,\varphi^{-1}(1/a_{E})\right)$ such that $\varphi ((1+\delta )u)\leq(1+\varepsilon )\varphi (u)$ for all $u\leq\varphi^{-1}(1/a_{E})$ (if necessary, we can define new Orlicz function $\psi$, by $\psi(u)=\varphi(u)$ for $u\in[0,\varphi^{-1}(1/a_{E})]$ and $\psi(u)=u-(\varphi^{-1}(1/a_{E})-1/a_{E})$ for $u>\varphi^{-1}(1/a_{E})$, then we have $\psi\in\Delta_{2-str}(\mathbb R_{+})$ and $(E_{\varphi},\|\cdot\|_{\varphi})\linebreak\equiv(E_{\psi},\|\cdot\|_{\psi})$). Proceeding analogously as in \eqref{rd} we get $\left\Vert x\right\Vert_{\varphi}<1$.
\end{proof}

\begin{theorem}\label{tt}
Suppose one of the following three conditions holds:
\begin{itemize}
\item[$(i)$] $\varphi\in\Delta_{2-str}^{E}$, whenever neither $L_{\infty}\subset E$ nor $E\subset L_{\infty}$,
\item[$(ii)$] $\varphi\in\Delta_{2-str}(\mathbb R_{+})$ or $(E$ is a Banach ideal space, $\varphi\in\Delta_{2-str}^{E}$ and $\varphi$ is strictly increasing on the interval $(a_{\varphi},\infty))$, whenever $L_{\infty}\subset E$,
\item[$(iii)$] $\varphi\in\Delta_{2-str}^{E}$,  $1/a_{E}\leq\varphi(b_{\varphi})$ and $\varphi$ is strictly increasing on the interval $(0,\varphi^{-1}(1/a_{E}))$, where $a_{E}$ is defined by formula \eqref{stala},  whenever $E\subset L_{\infty}$.
\end{itemize}
If $E_{\varphi }$ contains an order linearly isometric copy of $l_{\infty }$, then $E$ contains also such a copy.
\end{theorem}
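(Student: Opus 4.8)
The plan is to deduce the theorem from the characterization of order linearly isometric copies of $l_\infty$ provided by Theorem \ref{tk1}, applied once to $E_\varphi$ and once to $E$, with Lemma \ref{nor-mod=1} serving as the bridge between the two. Since $E_\varphi$ is a quasi-Banach ideal space (Lemma \ref{f_x}(iii)), the hypothesis that $E_\varphi$ contains an order linearly isometric copy of $l_\infty$ gives, by Theorem \ref{tk1}, a sequence $(x_n)_{n=1}^\infty$ in $(E_\varphi)_+$ with $\supp x_n\cap\supp x_m=\emptyset$ for $n\ne m$, $\|x_n\|_\varphi=1$ for every $n\in\mathbb N$, and $\|x\|_\varphi=1$, where $x:=\sum_{n=1}^\infty x_n\in E_\varphi$.

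The next step is to pass from norms to modulars. In each of the three cases $(i)$, $(ii)$, $(iii)$ the assumptions imposed in the present theorem are literally those of Lemma \ref{nor-mod=1}, so that lemma applies to every $x_n$ and to $x$, yielding $\rho_\varphi^E(x_n)=1$ for each $n$ and $\rho_\varphi^E(x)=1$; in particular $\varphi(|x_n|)\in E$ and $\varphi(|x|)\in E$.

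Now put $y_n:=\varphi(|x_n|)$ for $n\in\mathbb N$. Then $\|y_n\|_E=\|\varphi(|x_n|)\|_E=\rho_\varphi^E(x_n)=1$, so $y_n\in E_+\backslash\{0\}$; because $\varphi(0)=0$ we have $\supp y_n\subseteq\supp x_n$, hence the $y_n$ are pairwise orthogonal. Since the $x_n$ have pairwise disjoint supports, at $\mu$-a.e.\ point at most one summand $x_n$ is nonzero, which gives the pointwise identity $\sum_{n=1}^\infty\varphi(|x_n|)=\varphi(|x|)$ $\mu$-a.e.; therefore $\sum_{n=1}^\infty y_n=\varphi(|x|)\in E$ with $\|\sum_{n=1}^\infty y_n\|_E=\rho_\varphi^E(x)=1$. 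Thus $(y_n)_{n=1}^\infty$ is a sequence of positive, pairwise orthogonal elements of $E$ with $\|y_n\|_E=1$ and $\|\sum_{n=1}^\infty y_n\|_E=1$, and Theorem \ref{tk1}, now applied to $E$, produces the desired order linearly isometric copy of $l_\infty$ in $E$.

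The proof itself is short; all the substance already sits in the preparatory results (Theorem \ref{tk1} and, above all, Lemma \ref{nor-mod=1}), so the one point to get right is simply verifying that the hypotheses of each case of this theorem are exactly those under which $\rho_\varphi^E=1$ on the unit sphere of $E_\varphi$ holds, i.e.\ under which Lemma \ref{nor-mod=1} is valid. After that, only the elementary pointwise bookkeeping of the substitution $y_n=\varphi(|x_n|)$ combined with disjointness of supports remains, and there is no real obstacle; in particular the borderline situations ($b_\varphi<\infty$, or $\varphi$ degenerate near $0$) are already absorbed into the case hypotheses of Lemma \ref{nor-mod=1}.
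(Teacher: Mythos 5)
Your proposal is correct and follows essentially the same route as the paper: extract the disjointly supported sequence $(x_n)$ via Theorem \ref{tk1} applied to $E_\varphi$, use Lemma \ref{nor-mod=1} to convert $\|x_n\|_\varphi=1$ and $\|\sum_n x_n\|_\varphi=1$ into modular values equal to $1$, set $f_n=\varphi(|x_n|)$, and apply Theorem \ref{tk1} again in $E$. The only difference is that you spell out the pointwise identity $\sum_n\varphi(|x_n|)=\varphi(|x|)$ explicitly, which the paper leaves implicit.
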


\begin{proof}
By Theorem \ref{tk1}, there exists a sequence $(x_{n})_{n=1}^{\infty}$ in $S(\left(E_{\varphi }\right)_{+})=S(E_{\varphi})\cap\left(E_{\varphi}\right)_{+}$ with $\supp x_{n}\cap\supp x_{m}=\emptyset$ for $n\neq m$ such that $\lVert\sum_{n=1}^{\infty}x_{n}\rVert_{\varphi}=1$. Take $f_{n}=\varphi\left(x_{n}\right)$ for $n\in\mathbb N$. By Lemma \ref{nor-mod=1}, $f_{n}\in S(E_{+})=S(E)\cap E_{+}$ for the same $n$ and $\lVert\sum_{n=1}^{\infty}f_{n}\rVert_{E}=\rho_{\varphi}^{E}\left(\sum_{n=1}^{\infty}x_{n}\right)=1$. Moreover, $\supp(f_{n})\cap\supp(f_{m})=\emptyset$ for $n\neq m$. Applying again Theorem \ref{tk1} for the space $E$ we finish the proof.
\end{proof}

\begin{example}{\rm
Let $\varphi\left(u\right)=u^{p}$, $p>0.$ Clearly, $\varphi\in\Delta_{\varepsilon }\left(\mathbb{R}_{+}\right)$ and $\varphi\in\Delta_{2-str}\left(\mathbb{R}_{+}\right)$. Let $\left(E_{1},\left\Vert\cdot\right\Vert_{E_{1}}\right)=\left(L_{\psi},\left\Vert\cdot\right\Vert_{\psi}\right)$ (see Example \ref{olo}). By Theorem \ref{tko1} $\left(E_{1},\left\Vert\cdot\right\Vert_{E_{1}}\right)$ contains an order linearly isometric copy of $l_{\infty}$, whence applying Theorem \ref{ptt}, $\left(E_{1}\right)_{\varphi}$ contains also such a copy. On the other hand, the space $\left(E_{2}\right)_{\varphi }$, where $\left(E_{2},\left\Vert\cdot\right\Vert_{E_{2}}\right)=\left(L_{\psi},\left\Vert\cdot\right\Vert_{\psi }^{O}\right)$ - see Example \ref{olo}, does not contain such a copy. Indeed, otherwise, in view of Theorem \ref{tt}, the space $\left(E_{2},\left\Vert\cdot\right\Vert_{E_{2}}\right)$ would contain such a copy, which is a contradiction with Theorem 3 in \cite{cch}.}
\end{example}

\section{Acknowledgements}

Scientific research of Pawe\l\ Kolwicz was carried out at the Poznan University of Technology as part of the Rector's grant 2021 (research
project No. 0213/SIGR/2154).

\section{References}
\label{ll1}
\begin{biblist}
\bib{bhs}{article}{
   author={Bastero, J.},
   author={Hudzik, H.},
   author={Steinberg, A. M.},
   title={On smallest and largest spaces among rearrangement-invariant $p$-Banach function spaces $(0<p<1)$},
   journal={Indag. Math. (N.S.)},
   volume={2},
   date={1991},
   %number={3},
   pages={283--288},
   %issn={0019-3577},
   %review={\MR{1149681}},
   %doi={10.1016/0019-3577(91)90016-Z},
}
\bib{bs}{book}{
   author={Bennett, C.},
   author={Sharpley, R.},
   title={Interpolation of operators},
   series={Pure and Applied Mathematics},
   volume={129},
   publisher={Academic Press, Inc., Boston, MA},
   date={1988},
}
\bib{b}{book}{
   author={Birkhoff, G.},
   title={Lattice theory},
   series={Third edition. American Mathematical Society Colloquium
   Publications, Vol. XXV},
   publisher={American Mathematical Society, Providence, R.I.},
   date={1967},
   %pages={vi+418},
   %review={\MR{0227053}},
}
\bib{c}{article}{
   author={Chen, S. T.},
   title={Geometry of Orlicz spaces},
   journal={Dissertationes Math. (Rozprawy Mat.)},
   volume={356},
   date={1996},
   pages={204},
   %review={\MR{1410390}},
}
\bib{cch}{article}{
   author={Chen, S. T.},
   author={Cui, Y.},
   author={Hudzik, H.},
   title={Isometric copies of $l^1$ and $l^\infty$ in Orlicz spaces equipped
   with the Orlicz norm},
   journal={Proc. Amer. Math. Soc.},
   volume={132},
   date={2004},
   %number={2},
   pages={473--480},
   %issn={0002-9939},
   %review={\MR{2022371}},
   %doi={10.1090/S0002-9939-03-07099-0},
}
\bib{chkk}{article}{
   author={Cui, Y.},
   author={Hudzik, H.},
   author={Kaczmarek, R.},
   author={Kolwicz, P.},
   title={Geometric properties of F-normed Orlicz spaces},
   journal={Aequationes Math.},
   volume={93},
   date={2019},
   %number={1},
   pages={311--343},
   %issn={0001-9054},
   %review={\MR{3919437}},
   %doi={10.1007/s00010-018-0615-y},
}
\bib{chkk2}{article}{
   author={Cui, Y.},
   author={Hudzik, H.},
   author={Kaczmarek, R.},
   author={Kolwicz, P.},
   title={Uniform monotonicity of Orlicz spaces equipped with the Mazur-Orlicz F-norm and dominated best approximation in F-normed Köthe spaces},
   journal={Math. Nachr.},
   volume={295},   
   %number={3},  
   date={2022},
   pages={487--511},
}
\bib{Dobr}{article}{
   author={Dobrakov, I.},
   title={On submeasures. I},
   journal={Dissertationes Math. (Rozprawy Mat.)},
   volume={112},
   date={1974},
   pages={35},
   %issn={0012-3862},
   %review={\MR{367140}},
}
\bib{fh}{article}{
   author={Foralewski, P.},
   author={Hudzik, H.},
   title={Some basic properties of generalized Calderon-Lozanovski\u{\i} spaces},
   note={Fourth International Conference on Function Spaces (Zielona G\'{o}ra,
   1995)},
   journal={Collect. Math.},
   volume={48},
   date={1997},
   %number={4-6},
   pages={523--538},
   %issn={0010-0757},
   %review={\MR{1602584}},
}
\bib{hud}{article}{
   author={Hudzik, H.},
   title={Banach lattices with order isometric copies of $l^\infty$},
   journal={Indag. Math. (N.S.)},
   volume={9},
   date={1998},
   %number={4},
   pages={521--527},
   %issn={0019-3577},
   %review={\MR{1691991}},
   %doi={10.1016/S0019-3577(98)80031-1},
}
\bib{hkm}{article}{
   author={Hudzik, H.},
   author={Kami\'{n}ska, A.},
   author={Masty\l o, M.},
   title={Geometric properties of some Calder\'{o}n-Lozanovski\u{\i} spaces and Orlicz-Lorentz spaces},
   journal={Houston J. Math.},
   volume={22},
   date={1996},
   %number={3},
   pages={639--663},
   %issn={0362-1588},
   %review={\MR{1417636}},
}
\bib{hkm1}{article}{
   author={Hudzik, H.},
   author={Kami\'{n}ska, A.},
   author={Masty\l o, M.},
   title={Monotonicity and rotundity properties in Banach lattices},
   journal={Rocky Mountain J. Math.},
   volume={30},
   date={2000},
   pages={933--950},
   %issn={0035-7596},
   %review={\MR{1797824}},
   %doi={10.1216/rmjm/1021477253},
}
\bib{hm}{article}{
   author={Hudzik, H.},
   author={Maligranda, L.},
   title={Amemiya norm equals Orlicz norm in general},
   journal={Indag. Math. (N.S.)},
   volume={11},
   date={2000},
   pages={573--585},
   %issn={0019-3577},
   %review={\MR{1909821}},
   %doi={10.1016/S0019-3577(00)80026-9},
}
\bib{k1}{article}{
   author={Kalton, N. J.},
   title={Convexity conditions for nonlocally convex lattices},
   journal={Glasgow Math. J.},
   volume={25},
   date={1984},
   %number={2},
   pages={141--152},
   %issn={0017-0895},
   %review={\MR{752808}},
   %doi={10.1017/S0017089500005553},
}
\bib{k}{article}{
   author={Kalton, N. J.},
   title={Quasi-Banach spaces},
   book={
     title={Handbook of the geometry of Banach spaces, Vol. 2},
     publisher={North-Holland, Amsterdam},
   },
   date={2003},
   pages={1099--1130},
   %review={\MR{1999192}},
   %doi={10.1016/S1874-5849(03)80032-3},
}
\bib{KPR84}{book}{
   author={Kalton, N. J.},
   author={Peck, N. T.},
   author={Roberts, J. W.},
   title={An $F$-space sampler},
   series={London Mathematical Society Lecture Note Series},
   volume={89},
   publisher={Cambridge University Press, Cambridge},
   date={1984},
   %pages={xii+240},
   %isbn={0-521-27585-7},
   %review={\MR{808777}},
   %doi={10.1017/CBO9780511662447},
}
\bib{kama}{article}{
   author={Kami\'{n}ska, A.},
   author={Maligranda, L.},
   title={Order convexity and concavity of Lorentz spaces $\Lambda_{p,w}, 0<p<\infty$},
   journal={Studia Math.},
   volume={160},
   date={2004},
   %number={3},
   pages={267--286},
   %issn={0039-3223},
   %review={\MR{2033403}},
   %doi={10.4064/sm160-3-5},
}
\bib{kmp}{article}{
   author={Kami\'{n}ska, A.},
   author={Maligranda, L.},
   author={Persson, L. E.},
   title={Indices, convexity and concavity of Calder\'{o}n-Lozanovskii spaces},
   journal={Math. Scand.},
   volume={92},
   date={2003},
   %number={1},
   pages={141--160},
   %issn={0025-5521},
   %review={\MR{1951450}},
   %doi={10.7146/math.scand.a-14398},
} 
\bib{kz}{article}{
   author={Kami\'{n}ska, A.},
   author={\.{Z}yluk M.},
   title={Local geometric properties in quasi-normed Orlicz spaces},
   journal={Banach Center Publications},
   volume={119},
   date={2019},
   pages={197--221},
} 
\bib{ka}{book}{
   author={Kantorovich, L. V.},
   author={Akilov, G. P.},
   title={Functional analysis},
   edition={2},
   note={Translated from the Russian by Howard L. Silcock},
   publisher={Pergamon Press, Oxford-Elmsford, N.Y.},
   date={1982},
   %pages={xiv+589},
   %isbn={0-08-023036-9},
   %isbn={0-08-026486-7},
   %review={\MR{664597}},
}
\bib{kkm}{article}{
   author={Kiwerski, T.},
   author={Kolwicz, P.},
   author={Maligranda, L.},
   title={Isomorphic and isometric structure of the optimal domains for Hardy-type operators},
   journal={Studia Math.},
   volume={260},
   date={2021},
   %number={1},
   pages={45--89},
   %issn={0039-3223},
   %review={\MR{4274193}},
  % doi={10.4064/sm200211-8-9},
}
\bib{kkt}{article}{
   author={Kiwerski, T.},
   author={Kolwicz, P.},
   author={Tomaszewski, J.},
   title={Quotients, $\ell_{\infty}$ and abstract Cesàro spaces},
   journal={arXiv:2109.05239},
}
\bib{kko}{book}{
   author={Khamsi, M. A.},
   author={Kozlowski, W. M.},
   title={Fixed point theory in modular function spaces},
   %note={With a foreword by W. A. Kirk},
   publisher={Birkh\"{a}user/Springer, Cham},
   date={2015},
   %pages={x+245},
   %isbn={978-3-319-14050-6},
   %isbn={978-3-319-14051-3},
   %review={\MR{3329163}},
   %doi={10.1007/978-3-319-14051-3},
}
\bib{Kol2018-Posit}{article}{
   author={Kolwicz, P.},
   title={Kadec-Klee properties of some quasi-Banach function spaces},
   journal={Positivity},
   volume={22},
   date={2018},
   %number={4},
   pages={983--1013},
   %issn={1385-1292},
   %review={\MR{3843452}},
   %doi={10.1007/s11117-018-0555-8},
}   
\bib{klm}{article}{
   author={Kolwicz, P.},
   author={Le\'{s}nik, K.},
   author={Maligranda, L.},
   title={Pointwise multipliers of Calder\'{o}n-Lozanovski\u{\i} spaces},
   journal={Math. Nachr.},
   volume={286},
   date={2013},
   %number={8-9},
   pages={876--907},
   %issn={0025-584X},
   %review={\MR{3066406}},
   %doi={10.1002/mana.201100156},
}
\bib{ko}{book}{
   author={Kozlowski, W. M.},
   title={Modular function spaces},
   series={Monographs and Textbooks in Pure and Applied Mathematics},
   volume={122},
   publisher={Marcel Dekker, Inc., New York},
   date={1988},
   %pages={x+252},
   %isbn={0-8247-8001-9},
   %review={\MR{1474499}},
}
\bib{lee}{article}{
   author={Lee, H. J.},
   title={Complex convexity and monotonicity in quasi-Banach lattices},
   journal={Israel J. Math.},
   volume={159},
   date={2007},
   pages={57--91},
   %issn={0021-2172},
   %review={\MR{2342473}},
   %doi={10.1007/s11856-007-0038-2},
}
\bib{ltf}{book}{
   author={Lindenstrauss, J.},
   author={Tzafriri, L.},
   title={Classical Banach spaces. II, Function spaces},
   series={Ergebnisse der Ma\-the\-matik und ihrer Grenzge\-bie\-te},
   volume={97},
   publisher={Springer-Verlag, Berlin-New York},
   date={1979},
   %review={\MR{540367}},
}
\bib{ma}{book}{
   author={Maligranda, L.},
   title={Orlicz spaces and interpolation},
   series={Semin\'{a}rios de Matem\'{a}tica [Seminars in Mathematics]},
   volume={5},
   publisher={Universidade Estadual de Campinas, Departamento de Matem\'{a}tica,
   Campinas},
   date={1989},
   %review={\MR{2264389}},
}
\bib{ma04}{article}{
   author={Maligranda, L.},
   title={Type, cotype and convexity properties of quasi-Banach spaces},
   conference={
      title={International Symposium on Banach and Function Spaces},
      address={Kitakyushu},
      date={2003},
   },
   book={
       title={Banach and function spaces},      
       publisher={Yokohama Publ., Yokohama},
   },
   date={2004},
   pages={83--120},
   %review={\MR{2147602}},
}
\bib{mu}{book}{
   author={Musielak, J.},
   title={Orlicz spaces and modular spaces},
   series={Lecture Notes in Mathematics},
   volume={1034},
   publisher={Springer-Verlag, Berlin},
   date={1983},
   %review={\MR{724434}},
} 
\bib{n}{article}{
   author={O'Neil, R.},
   title={Fractional integration in Orlicz spaces. I},
   journal={Trans. Amer. Math. Soc.},
   volume={115},
   date={1965},
   pages={300--328},
   %issn={0002-9947},
   %review={\MR{194881}},
   %doi={10.2307/1994271},
}
\bib{rt}{article}{
   author={Raynaud, Y.},
   author={Tradacete, P.},
   title={Calder\'{o}n-Lozanovskii interpolation on quasi-Banach lattices},
   journal={Banach J. Math. Anal.},
   volume={12},
   date={2018},
   %number={2},
   pages={294--313},
   %issn={2662-2033},
   %review={\MR{3779715}},
   %doi={10.1215/17358787-2017-0053},
}\bib{Wnuk}{book}{
   author={Wnuk, W.},
   title={Banach lattices with order continuous norms},
   series={Advanced topics in mathematics},
   publisher={Polish Scientific Publishers PWN, Warszawa},
   date={1999},
}
\end{biblist}

\end{document}